\numberwithin{equation}{section}
\newtheorem{thm}{Theorem}[section]
\newtheorem{prop}[thm]{Proposition}
\newtheorem{lem}[thm]{Lemma}
\newtheorem{cor}[thm]{Corollary}
\newcommand{\ket}[1]{{| #1 \rangle}}      
\newcommand{\one}{\mathbf{1}}
\newcommand{\C}{{\mathbb C}}
\newcommand{\Z}{{\mathbb Z}}
\newcommand{\cA}{{\mathcal A}}
\newcommand{\cB}{{\mathcal B}}
\newcommand{\E}{{\mathcal E}}
\newcommand{\F}{\mathcal F}
\newcommand{\bs}{\boldsymbol}
\newcommand{\GS}{\mathfrak{S}}
\newcommand{\gl}{\mathfrak{gl}}
\newcommand{\sln}{\mathfrak{sl}}
\newcommand{\bla}{{\boldsymbol \la}}
\newcommand{\la}{\lambda}
\newcommand{\La}{\Lambda}
\newcommand{\ssE}{\textsf{E}}
\newcommand{\ssF}{\textsf{F}}
\newcommand{\ssK}{\textsf{K}}
\newcommand{\id}{{\rm id}}
\newcommand{\Sym}{\mathrm{Sym}}
\newcommand{\mc}{\mathcal}
\newcommand{\al}{\alpha}
\newcommand{\cN}{\mathcal{N}}
\newcommand{\g}{\mathfrak{g}}
\newcommand{\ev}{\mathrm{ev}}
\newcommand{\pr}{\mathrm{pr}\,}
\definecolor{jimbo}{rgb}{0,0,1}
\definecolor{jimbocomment}{rgb}{0.8,0.0,0.2}
\begin{document}

\begin{title}[Evaluation modules]
{Evaluation modules for quantum toroidal $\gl_n$ algebras}
\end{title}
\author{B. Feigin, M. Jimbo, and E. Mukhin}
\address{BF: National Research University Higher School of Economics, 
Russian Federation, International Laboratory of Representation Theory 
and \newline Mathematical Physics, Russia, Moscow,  101000,  
Myasnitskaya ul., 20 and Landau Institute for Theoretical Physics,
Russia, Chernogolovka, 142432, pr.Akademika Semenova, 1a
}
\email{bfeigin@gmail.com}
\address{MJ: Department of Mathematics,
Rikkyo University, Toshima-ku, Tokyo 171-8501, Japan}
\email{jimbomm@rikkyo.ac.jp}
\address{EM: Department of Mathematics,
Indiana University-Purdue University-Indianapolis,
402 N.Blackford St., LD 270, 
Indianapolis, IN 46202, USA}\email{emukhin@iupui.edu}

\dedicatory{To Vyjayanthi Chari on the occasion of her 60th birthday}

\begin{abstract} 
The affine evaluation map is  
a surjective homomorphism from the quantum toroidal $\gl_n$
algebra $\E'_n(q_1,q_2,q_3)$ to the quantum affine algebra $U'_q\widehat\gl_n$ at level $\kappa$
completed with respect to the homogeneous grading, where $q_2=q^2$ and 
$q_3^n=\kappa^2$.

We discuss $\E'_n(q_1,q_2,q_3)$ evaluation modules.
We give highest weights of evaluation highest weight modules.  
We also obtain the decomposition of the 
evaluation Wakimoto module with respect to a Gelfand-Zeitlin type subalgebra of 
a completion of $\E'_n(q_1,q_2,q_3)$, 
which describes a deformation of the coset theory $\widehat\gl_n/\widehat\gl_{n-1}$.
\end{abstract}
\maketitle 

\section{Introduction}
For an arbitrary complex Lie algebra $\g$ and a non-zero constant $u$, 
we have the evaluation map 
$$\g\otimes\C[t,t^{-1}]\to\g, \qquad g\otimes t^k\mapsto u^kg.$$
The evaluation map is a surjective homomorphism of Lie algebras. 
It plays a prominent role in representation theory of current algebras and various constructions in mathematics and physics.

In type A, a quantum version of the evaluation map $\overline{ev}_u:\ U_q'\widehat{\sln}_n\to U_q\gl_n$ was 
introduced in \cite{J}, see \eqref{ev1} below. 
This map is used to construct simplest possible representations of the quantum affine algebra
$U_q\widehat{\mathfrak{\sln}}_n$ 
called evaluation modules. The evaluation modules are central for many studies. 
For example, the $R$ matrix of the 
celebrated six vertex model 
is an intertwiner for tensor products of two evaluation modules. It is well known that the quantum 
evaluation map does not exist for simple Lie algebras $\g$ other than in type A.

The affine analog of the quantum evaluation map was discovered in \cite{Mi2} which we now recall.
Let $\E_n'=\E_n'(q_1,q_2,q_3)$ be the quantum toroidal algebra associated to $\gl_n$. 
It depends on complex parameters $q_1,q_2,q_3$ such that $q_1q_2q_3=1$ and a central elements $C$ 
(we set the second central element to 1), see Section \ref{sec:En}. 
Let $U'_q\widehat{\gl}_n$ be the 
quantum affine algebra associated to $\gl_n$, 
see Section \ref{sec:affine}. 
It depends on a complex parameter $q$ and it has 
a central element $C$. We always assume $q^2=q_2$. 
An easy well known fact is that there is a homomorphism of algebras 
$v:\ U'_q\widehat{\gl}_n\to \E'_n(q_1,q_2,q_3)$ 
such that $v(C)= C$, see \eqref{embed}.  

We consider $U'_q\widehat{\gl}_n$ in the Drinfeld new realization, and we denote by 
$\widetilde{U}'_q\widehat{\gl}_n$  
its completion with respect to the homogeneous grading, see \eqref{degree}. 
We also impose the following key 
relation for the central elements of $\E_n'$ and $\widetilde{U}'_q\widehat{\gl}_n$ with parameter $q_3$:
\begin{equation}\label{levels}
C^2=q_3^n.
\end{equation}
Then by \cite{Mi2} there exists a surjective algebra homomorphism depending on a non-zero complex number $u$:
$$
\ev^{(3)}_u:\ \E_n'\to \widetilde{U}'_q\widehat{\gl}_n
$$
such that $\ev^{(3)}_u\circ v=id$.  
We call the homomorphism $\ev^{(3)}_u$ the  quantum affine evaluation map.

We give formulas for the quantum affine evaluation map in Section \ref{sec:map} and 
provide the proof in Section \ref{sec:proof}. We supply a number of details which were omitted in \cite{Mi2}.

\medskip

The completed algebra $\widetilde{U}'_q\widehat{\gl}_n$ acts on highest weight $U'_q\widehat{\gl}_n$ modules and 
therefore every such representation becomes an evaluation representation of the quantum toroidal algebra with the parameters satisfying \eqref{levels}. Moreover, this representation of $\E_n'$ is also a highest weight module. 
We discuss the highest weights of evaluation modules in Section \ref{hw:sec}. 
It turns out that some evaluation representations appeared already in \cite{FJMM1}.

\medskip

The algebra $\E'_n(q_1,q_2,q_3)$ has mutually commuting subalgebras $\E'_{1,n-1}\simeq \E'_1(q_1^n,q_2,q_3q_1^{-n+1})$ 
and $\E^{0|n-1}_{n-1}\simeq \E'_{n-1}(q_1q_3^{-1/(n-1)},q_2,q_3q_3^{1/(n-1)})$. 
Under the affine evaluation map, the image of $\E'_{1,n-1}$ is a deformation of the coset W algebra $\widehat\gl_{n}/\widehat\gl_{n-1}$. 
Therefore it is important to describe the decomposition of evaluation modules with respect to these subalgebras. 

For that purpose we recall the $n$ remarkable pairwise commuting subalgebras $\E'_{1,m}(q_{1,m},q_2,q_{3,m})$ 
corresponding to the diagonal inclusions of $\gl_1$ to $\gl_n$, see \cite{FJMM}. 
We consider the Wakimoto $U'_q\widehat{\gl}_n$ modules generated from Gelfand-Zeitlin modules of $U_q{\gl}_n$ 
and the corresponding evaluation module $\widehat{\rm GZ}_{\bla^0}(u)$ of $\E'_n(q_1,q_2,q_3)$, see Section \ref{GZ:sec}. 
We describe the decomposition of 
 $\widehat{\rm GZ}_{\bla^0}(u)$ 
with respect to algebra $\otimes_{m=0}^{n-1}\E'_{1,m}(q_{1,m},q_2,q_{3,m})$, see Theorem \ref{decompose}. 

This result is  also important for applications, see \cite{FJM}. In fact, the screening operators commuting with a 
copy of quantum affine $\gl_2$ in \cite{FJM} 
are obtained from currents of $\E'_{1,2}$ acting in an $\E'_2$ evaluation Wakimoto module.

\medskip

The paper is constructed as follows. We give definitions of various quantum algebras in 
Section \ref{sec:finite} and Section \ref{sec:toroidal}. 
In Section \ref{sec:fusion} we recall the fused currents of \cite{FJMM} 
and study their commutation relations with other currents. In Section \ref{sec:A} 
we recall the definition of the subalgebras $\E'_{1,m}(q_{1,m},q_2,q_{3,m})$. 
We give the evaluation map in Section \ref{sec:map} and 
prove that it is well-defined in Section \ref{sec:proof}. 
We discuss evaluation modules in Section \ref{sec:comments}. 
We discuss evaluation highest weight modules in Section \ref{hw:sec} and   evaluation Wakimoto modules 
in Section \ref{GZ:sec}. We give a proof of the result on  evaluation Wakimoto modules in Appendix \ref{sec: Appendix}.

\section{Quantum groups}\label{sec:finite}
In this section we set up the notation for various quantum groups.

Let $n$ be a positive integer. 
For $n\ge2$, let $(a_{i,j})_{i,j\in\Z/n\Z}$ be 
the Cartan matrix of type $A^{(1)}_{n-1}$. 

Let $\C P=\oplus_{i\in \Z/n\Z} \C\varepsilon_i$ be an $n$-dimensional vector space with the chosen basis and a non-degenerate form such that $(\varepsilon_i,\varepsilon_j)=\delta_{i,j}$. Let $P=\oplus_{i\in \Z/n\Z}\Z\varepsilon_i$ be the lattice.

Set $\al_i=\varepsilon_{i-1}-\varepsilon_i$ and $\La_i=\sum_{j=0}^{i-1}\varepsilon_j$,  $1\le i \le n-1$. 
We have $(\al_i,\Lambda_j)=\delta_{i,j},$ $(\al_i,\al_j)=a_{i,j}$.

Let $ \C\bar P=(\sum_{i\in \Z/n\Z}\varepsilon_i)^\perp\subset \C P$. For $p\in\C P$ denote $\bar p\in\C \bar P$ the projection along  $\C (\sum_{i\in \Z/n\Z}\varepsilon_i)$. 
Then $\bar\al_i$, $\bar\Lambda_i$, $1\le i\le  n-1$, are simple roots and fundamental weights of $\mathfrak{sl}_n$ respectively. 

\medskip

Fix $\log q, \log d\in\C$ and set $q=e^{\log q}, d=e^{\log d}$, 
$q_1=q^{-1}d, q_2=q^2, q_3=q^{-1}d^{-1}$, 
so that $q_1q_2q_3=1$. We assume that, for rational numbers $a,b,c$, the equality
$q_1^aq_2^bq_3^c=1$ holds if and only if $a=b=c$.

\medskip

We use the standard notation $[A,B]_p=AB-pBA$ and $[r]=(q^r-q^{-r})/(q-q^{-1})$.

\subsection{Quantum algebra $U_q{\gl}_n$}
The quantum $\gl_n$ algebra $U_q{\gl}_n$ 
 has generators $e_i$, $f_i$, $q^h$, $1\le i\le n-1$, $h\in P$,
  with the defining relations
  \begin{align*}
  &q^{h}q^{h'}=q^{h+h'}, \qquad q^0=1, \qquad q^{h}e_i=q^{(h,\al_i)}e_iq^{h},
\qquad q^{h}f_i=q^{-(h, \al_i)}f_iq^{h}\,, \\
  &[e_i,f_j]=\delta_{i,j}\frac{K_i-K_i^{-1}}{q-q^{-1}}\,,\\
  &[e_i,e_j]=[f_i,f_j]=0 \qquad (|i-j|\ge 2), \\
  &[e_i,[e_i,e_j]_{q^{-1}}]_q=[f_i,[f_i,f_j]_{q^{-1}}]_q=0 \qquad (|i-j|=1),
  \end{align*} 
where $K_i=q^{\al_i}$.

The quantum $\sln_n$ algebra $U_q{\sln}_n$ is the subalgebra of  $U_q{\gl}_n$ generated by $e_i,f_i$, $K_i$, $1\le i\leq	 n-1$.
  
The element ${\sf t}=q^{\varepsilon_0+\varepsilon_1+\dots+\varepsilon_{n-1}}\in U_q{\gl}_n$ is central and split.

\subsection{Quantum affine algebra $U'_q\widehat{\gl}_n$}\label{sec:affine}
The quantum affine algebra $U'_q\widehat{\sln}_n$ in the 
Drinfeld new  realization is defined
 by generators $x^\pm_{i,k}$, $h_{i,r}$, $q^h$, $C$, where
$1\le i\le  n-1$, $k\in\Z$, $r\in\Z\setminus\{0\}$,   $h\in P$,  with the defining relations
\begin{align*}
&\text{$C$ is central},\quad
q^{h}q^{h'}=q^{h+h'},\quad
q^0=1\,, \\
&q^hx^\pm_i(z)q^{-h}=q^{\pm(h,\alpha_i)}x^\pm_i(z)\, ,
\quad [q^h,h_{j,r}]=0\,,
\\
&[h_{i,r},h_{j,s}]=\delta_{r+s,0}\frac{[ra_{i,j}]}{r}
\frac{C^r-C^{-r}}{q-q^{-1}}\,,
\\
&[h_{i,r},x^{\pm}_j(z)]=\pm\frac{[ra_{i,j}]}{r}C
^{-(r\pm|r|)/2}z^rx^\pm_j(z)\,,
\\
&[x^+_i(z),x^-_j(w)]=\frac{\delta_{i,j}}{q-q^{-1}}\Bigl(
\delta\bigl(C\frac{w}{z}\bigr)\phi_i^+(w)-
\delta\bigl(C\frac{z}{w}\bigr)\phi_i^-(z)
\Bigr)\,,
\\
&(z-q^{\pm a_{ij}}w)x^\pm_i(z)x^\pm_j(w)+
(w-q^{\pm a_{ij}}z)x^\pm_j(w)x^\pm_i(z)=0\,,
\\
&[x^\pm_i(z),x^\pm_j(w)]=0\quad \text{if $a_{ij}=0$},\\
&\Sym_{z_1,z_2}[x^\pm_i(z_1),[x^\pm_i(z_2),x^\pm_j(w)]_{q}]_{q^{-1}}=0
\quad \text{if $a_{ij}=-1$}.
\end{align*}
Here we set $x^\pm_i(z)=\sum_{k\in\Z}x^\pm_{i,k}z^{-k}$, 
$\phi^{\pm}_j(z)=K_j^{\pm1}
\exp\bigl(\pm(q-q^{-1})\sum_{r>0}h_{j,\pm r}z^{\mp r}\bigr)$, where
$K_j=q^{\al_j}$, $1\le j\le n-1$.

\medskip

The quantum affine algebra $U'_q\widehat{\gl}_n$ is the algebra $U'_{q}\widehat{\sln}_n$  with additional Heisenberg generators $Z_r$, $r\in\Z\setminus\{0\}$, such that
\begin{align}\label{Zr}
[Z_k, U'_{q}\widehat{\sln}_n]=0,
\quad [Z_r,Z_s]=-\delta_{r+s,0}[nr]\frac{1}{r}\frac{C^r-C^{-r}}{q-q^{-1}}\,.
\end{align}

\medskip

The prime in the notation $U'_{q}\widehat{\sln}_n$, $U'_{q}\widehat{\gl}_n$ indicates that we do not consider the degree operator.

The element ${\sf t}=q^{\varepsilon_0+\dots+\varepsilon_{n-1}}$ is central and split in both  $U'_{q}\widehat{\sln}_n$ and $U'_q\widehat{\gl}_n$.

The subalgebra of $U'_{q}\widehat{\gl}_n$ generated by $x^{\pm}_{i,0}$, $q^h$, $1\le i\le n-1$, $h\in P$, is isomorphic to $U_q\gl_n$.

For $n>2$, the algebra $U'_q\widehat{\gl}_n$ can be described by the same relations as above by allowing $h_{0,r}$ and setting 
$\al_0=\varepsilon_{n-1}-\varepsilon_{0}$. Then we have
\begin{align*}
Z_r=\sum_{i=0}^{n-1}\frac{q^{ir}+q^{(n-i)r}}{q^r-q^{-r}}h_{i,r}.
\end{align*}

\medskip

Define the Chevalley generators of $U'_q\widehat{\sln}_n$:
\begin{align}
&e_i=x^+_{i,0},\quad f_i=x^-_{i,0} \qquad(1\le i \le n-1),
\nonumber\\
&e^{(r)}_0=
[x^-_{n-1,0}, \cdots [x^-_{2,0},x^-_{1,-1}]_{q}\cdots ]_{q}\,
q^{\al_1+\dots+\al_{n-1}}\,,\label{Chev}\\
&f^{(r)}_0=
q^{-\al_1-\dots-\al_{n-1}}
[\cdots [x^+_{1,1},x^+_{2,0}]_{q^{-1}},\cdots x^+_{n-1,0}]_{q^{-1}}
\,.
\nonumber
\end{align}
The Chevalley generators $e_j, f_j$, 
$1\le j\le n-1$, 
$e_0^{(r)},f_0^{(r)}$ generate $U'_q\widehat{\sln}_n$.

We will also use the other set of Chevalley generators which we call left Chevalley generators:
\begin{align}
&e_0^{(l)}=q^{-\al_1-\dots-\al_{n-1}}
[ \cdots [x^-_{1,1}, x^-_{2,0}]_{q}\cdots ,x^-_{n-1,0}]_{q}\,,
 \notag \\
&f_0^{(l)}=
[x^+_{n-1,0},\cdots [x^+_{2,0},x^+_{1,-1}]_{q^{-1}}\cdots ]_{q^{-1}}q^{\al_1+\dots+\al_{n-1}}.\label{left Chev}
\end{align}
The elements $e_j, f_j$, $1\le j\le n-1$,   
$e_0^{(l)},f_0^{(l)}$ generate $U'_q\widehat{\sln}_n$ as well.

\medskip
 For $u\in\C^\times$, we have the evaluation homomorphism, 
$\overline{\ev}_u:U^{'}_q\widehat{\mathfrak{sl}}_n\to U_q\mathfrak{gl}_n$, given in Chevalley generators by (see \cite{J}) 
\begin{align}
&\overline{\ev}_u(e_i)=e_i, \qquad  \overline{\ev}_u(f_i)=f_i \qquad (1\le i\le n-1), 
\nonumber 
\\
&\overline{\ev}_u(e_0)=
u^{-1} q^{-\Lambda_1+\Lambda_{n-1}}
[\cdots[f_1,f_2]_{q^{-1}},\cdots,f_{n-1}]_{q^{-1}}\,,
\label{ev1}
\\
&\overline{\ev}_u(f_0)=
u \,
[e_{n-1},\cdots[e_2,e_1]_{q},\cdots]_{q}\, q^{\Lambda_1-\Lambda_{n-1}}\,.
\nonumber 
\end{align} 
Note that $\overline{\ev}_u(C)=1$.
\medskip

We have  the homogeneous grading given by
\begin{align}\label{degree}
&\deg e_i=-\deg f_i=0 \qquad (1\le i\le n-1), \qquad &\deg q^h=0 \qquad (h\in P),\nonumber
\\
&\deg\, e_0^{(l)}=-\deg\, f_0^{(l)}=1, &\deg Z_r=r\qquad (r\neq0)\,.
\end{align}
We also have the principal degree given by
\begin{align}\label{pdegree}
&{\rm pdeg}\, e_i=-{\rm pdeg}\, f_i=1 \qquad (1\le i\le n-1), \qquad & {\rm pdeg}\, q^h=0 \qquad (h\in P),\notag
\\
&{\rm pdeg}\, e_0^{(l)}=-{\rm pdeg}\, f_0^{(l)}=1, & \quad {\rm pdeg}\, Z_r=nr\qquad (r\neq0)\,.
\end{align}

Then, in particular,  $\deg\, x^{\pm}_{i,k}=k$ and ${\rm pdeg}\, x^{\pm}_{i,k}=nk\pm 1$.

For $\kappa\in\C^\times$ we denote $U'_{q,\kappa}\widehat{\gl}_n$ the quotient of $U'_q\widehat{\gl}_n$ by the relation $C=\kappa$.
We denote $\widetilde{U}'_{q,\kappa}\widehat{\gl}_n$ the completion of the algebra $U'_{q,\kappa}\widehat{\gl}_n$ with respect to 
the homogeneous grading 
 in the positive direction. 
Elements of $\widetilde{U}'_{q,\kappa}\widehat{\gl}_n$ have the form $\sum_{r=r_0}^\infty g_r$, where 
$g_r\in U'_{q,\kappa}\widehat{\gl}_n$, $\deg g_r=r$.

 We denote by ${\mathfrak b}_{q,\kappa}$
the subalgebra of $U'_{q,\kappa}\widehat{\gl}_n$ generated by 
$x_{i,k}^{\pm}$,  $h_{j,r}$, $Z_r$,
$q^h$, where $1\le i\le n-1$, 
$1\le j\le n-1$, 
$k\ge 0$, 
$r\ge 1$, $h\in P$.

\section{Quantum toroidal $\gl_n$}\label{sec:En}
In this section we recall the definition of quantum toroidal algebras and some facts we will use.

\subsection{Definition of $\E'_n$}\label{sec:toroidal}
For $i,j\in\Z/n\Z$ and $r\neq 0$ we set
\begin{align*}
a_{i,j}(r)=\frac{[r]}{r} \times \left((q^r+q^{-r})\delta^{(n)}_{i,j}-d^r \delta^{(n)}_{i,j-1}-d^{-r}\delta^{(n)}_{i,j+1}\right),
\end{align*}
where $\delta^{(n)}_{i,j}$ is Kronecker's delta modulo $n$: $\delta^{(n)}_{i,j}=1$ (if $i\equiv j\bmod n$) and $\delta^{(n)}_{i,j}=0$ (otherwise).

Define further functions $g_{i,j}(z,w)$ by
\begin{align*}
n\ge 3&:\quad
g_{i,j}(z,w)=\begin{cases}
	      z-q_1w & (i\equiv j-1),\\
              z-q_2w & (i\equiv j),\\
	      z-q_3w & (i\equiv j+1),\\
              z-w & (i\not\equiv j,j\pm1),\\
	     \end{cases}\\
n=2&:\quad 
 g_{i,j}(z,w)=\begin{cases}
	      z-q_2w & (i\equiv j),\\
              (z-q_1w)(z-q_3w)& (i\not\equiv j),
	     \end{cases}\\
n=1&:\quad 
 g_{0,0}(z,w)=(z-q_1w)(z-q_2w)(z-q_3w),
\end{align*}
and set
$d_{i,j}=d^{\mp 1}$ ($i\equiv j\mp1, n\ge 3$), $d_{i,j}=-1$ ($i\not\equiv j, n=2$), and
$d_{i,j}=1$ (otherwise).

The quantum toroidal algebra  of type $\gl_n$, 
which we denote by $\E'_n=\E'_n(q_1,q_2,q_3)$, is a unital associative
algebra generated by $E_{i,k},F_{i,k},H_{i,r}$ 
and invertible elements $q^{h}$, $C$, 
where $i\in\Z/n\Z$, $k\in \Z$, $r\in\Z\backslash\{0\}$, 
$h\in P$.  
As always, we set $K_i=q^{\alpha_i}$ for $i\in\Z/ n\Z$. We have  
\begin{equation}\label{KKKK}
K_0=\prod_{i=1}^{n-1}K_i^{-1}.
\end{equation}

We present below the defining relations 
in terms of generating series
\begin{align*}
&E_i(z)=\sum_{k\in\Z}E_{i,k}z^{-k}, 
\quad
F_i(z)=\sum_{k\in\Z}F_{i,k}z^{-k},\\
&K^{\pm}_i(z)=K_i^{\pm1}\bar{K}^{\pm}_i(z)\,,
\quad
\bar{K}^{\pm}_i(z)=
\exp\Bigl(\pm(q-q^{-1})\sum_{r>0}H_{i,\pm r}z^{\mp r}\Bigr)\,.
\end{align*}
The relations are as follows.

\bigskip

\noindent{\bf $C,q^h$ relations}
\begin{align}
&\text{$C$ is central},\quad
q^{h}q^{h'}=q^{h+h'},\quad
q^0=1\,,
\label{CD1}
\\
&q^{h}E_i(z)q^{-h}=
q^{(h,\alpha_i)}E_i(z)\,,
\quad
q^{h}F_i(z)q^{-h}=
q^{-(h,\alpha_i)}F_i(z)\,,
\quad q^hK^\pm_i(z)=K^\pm_i(z)q^h\,.
\label{CD2}
\end{align}
\bigskip

\noindent{\bf $K$-$K$, $K$-$E$ and $K$-$F$ relations}
\begin{align}
&K^\pm_i(z)K^\pm_j (w) = K^\pm_j(w)K^\pm_i (z), 
\label{KK1}\\
&\frac{g_{i,j}(C^{-1}z,w)}{g_{i,j}(Cz,w)}
K^-_i(z)K^+_j (w) 
=
\frac{g_{j,i}(w,C^{-1}z)}{g_{j,i}(w,Cz)}
K^+_j(w)K^-_i (z),
\label{KK2}\\
&d_{i,j}g_{i,j}(z,w)K_i^\pm(C^{-(1\pm1)/2}z)E_j(w)+g_{j,i}(w,z)E_j(w)K_i^\pm(C^{-(1\pm1) /2}z)=0,
\label{KE}\\
&d_{j,i}g_{j,i}(w,z)K_i^\pm(C^{-(1\mp1)/2}z)F_j(w)+g_{i,j}(z,w)F_j(w)K_i^\pm(C^{-(1\mp1) /2}z)=0\,.
\label{KF}
\end{align}
\bigskip

\noindent{\bf $E$-$F$ relations}
\begin{align}
&[E_i(z),F_j(w)]=\frac{\delta_{i,j}}{q-q^{-1}}
(\delta\bigl(C\frac{w}{z}\bigr)K_i^+(w)
-\delta\bigl(C\frac{z}{w}\bigr)K_i^-(z))\,.
\label{EF}
\end{align}
\bigskip

\noindent{\bf $E$-$E$ and $F$-$F$ relations}
\begin{align*}
&[E_i(z),E_j(w)]=0\,, \quad [F_i(z),F_j(w)]=0\, \quad (i\not\equiv j,j\pm1)\,,
\\
&d_{i,j}g_{i,j}(z,w)E_i(z)E_j(w)+g_{j,i}(w,z)E_j(w)E_i(z)=0, 
\\
&d_{j,i}g_{j,i}(w,z)F_i(z)F_j(w)+g_{i,j}(z,w)F_j(w)F_i(z)=0.
\end{align*}
\bigskip

\noindent{\bf Serre relations}
For $n\ge3$,
\begin{align*}
&\Sym_{{z_1,z_2}}[E_i(z_1),[E_i(z_2),E_{i\pm1}(w)]_q]_{q^{-1}}=0\,,
\\
&\Sym_{{z_1,z_2}}[F_i(z_1),[F_i(z_2),F_{i\pm1}(w)]_q]_{q^{-1}}=0\,.
\end{align*}

For $n=2$, $i\not\equiv j$,
\begin{align*}
&\Sym_{z_1,z_2,z_3}[E_i(z_1),[E_i(z_2),[E_i(z_3),E_j(w)]_{q^2}]]_{q^{-2}} =0\,,
\\
&\Sym_{z_1,z_2,z_3}[F_i(z_1),[F_i(z_2),[F_i(z_3),F_j(w)]_{q^2}]]_{q^{-2}} =0\,.
\end{align*}

For $n=1$, 
\begin{align*}
&\Sym_{z_1,z_2,z_3}z_2z_3^{-1}[E_0(z_1),[E_0(z_2),E_0(z_3)]] =0\,,
\\
&\Sym_{z_1,z_2,z_3}z_2z_3^{-1}[F_0(z_1),[F_0(z_2),F_0(z_3)]] =0\,.
\end{align*}
In the above we set
\begin{align*}
&\Sym\ f(x_1,\dots,x_N) =\frac{1}{N!}
\sum_{\pi\in\GS_N} f(x_{\pi(1)},\dots,x_{\pi{(N)}})\,.
\end{align*}
\bigskip

Under the $C,q^h$ relations \eqref{CD2}, 
the $K$-$K$, $K$-$E$ and $K$-$F$ relations 
\eqref{KK1}--\eqref{KF} are equivalently written as
\bigskip

\noindent{\bf $H$-$E$, $H$-$F$, and $H$-$H$ relations}\quad
For $r\neq 0$, 
\begin{align*}
&[H_{i,r},E_j(z)]= a_{i,j}(r)C^{-(r+|r|)/2}  
\,z^r E_j(z)\,,
\\
&[H_{i,r},F_j(z)]=-a_{i,j}(r)C^{-(r-|r|)/2}   
\,z^r F_j(z)\,,
\\
&[H_{i,r},H_{j,s}]=\delta_{r+s,0} \cdot a_{i,j}(r)\eta_r\,,
\quad \eta_r=\frac{C^r-C^{-r}}{q-q^{-1}}\,.
\end{align*}

The $E$-$E$ and $F$-$F$ relations with $j\equiv i\pm 1$ in Fourier components read for $n\ge3$ 
\begin{align}\label{quad-rel}
&[E_{i,k+1},E_{i+1,l}]_{q^{-1}}=
q_1 [E_{i,k},E_{i+1,l+1}]_{q}\,,
\\
&[F_{i,k+1},F_{i+1,l}]_{q}=
q_3^{-1} [F_{i,k},F_{i+1,l+1}]_{q^{-1}}\,,\notag
\end{align} 
and for $n=2$
\begin{align}\label{quad-rel2}
&[E_{i,k+1},E_{i+1,l-1}]_{q^{-2}}
-(q_1+q_3)[E_{i,k},E_{i+1,l}]
+q_1q_3[E_{i,k-1},E_{i+1,l+1}]_{q^{2}}=0\,,
\\
&[F_{i+1,l-1},F_{i,k+1}]_{q^{-2}}
-(q_1+q_3)[F_{i+1,l},F_{i,k}]
+q_1q_3[F_{i+1,l+1},F_{i,k-1}]_{q^{2}}=0\,.\notag
\end{align}

\bigskip

The algebra $\E'_n$ considered here is obtained from 
\cite{FJMM} by setting the second central element $\prod_{i=0}^{n-1}K_i$ to $1$, 
dropping the scaling elements
$D$, $D^\perp$ and adding the split central element ${\sf t}=q^{{\varepsilon}_0+\dots+\varepsilon_{n-1}}$. 
Our generators $E_i(z),F_i(z),K_i^\pm(z), C$ 
correspond to the perpendicular generators $E^\perp_i(z),F^\perp_i(z),K^{\pm,\perp}_i((q^c)^\perp z), (q^c)^\perp$
of \cite{FJMM}.

\medskip

Algebra $\E_n'$ is $\Z$-graded by  
\begin{align}\label{tor degree}
\deg E_{i,k}=\deg F_{i,k}=k, \qquad \deg H_{i,r}=r, \qquad  \deg C=\deg q^h=0. 
\end{align}
We denote $\widetilde{\E}_n'$ the completion of $\E_n'$ with respect to this grading in the positive direction.
   
We have a graded embedding $v:\ U'_q\widehat{\gl}_n\to \E'_n$ given by
\begin{align}\label{embed}
&x_i^+(z)\mapsto E_i(d^{-i}z)\,,\quad  
x_i^-(z)\mapsto F_i(d^{-i}z)\,,\quad  
\phi_i^\pm(z)\mapsto K^\pm_i(d^{-i}z)\,,
\end{align}
and $C\mapsto C$,  
$q^h\mapsto q^h$. We call the image of $v$ the vertical subalgebra
and denote it by $U^v_q\widehat{\gl}_n$. 

We call the subalgebra of $\E'_n$ generated by $E_{i,0},F_{i,0}$, $0\le i\le n-1$,
the horizontal subalgebra. 
The horizontal subalgebra, which we denote by  $U^h_q\widehat{\sln}_n$, 
is isomorphic to the quotient of 
$U_q'\widehat{\sln}_n$ 
by the relation $C=1$. 

There exists an isomorphism of algebras 
\begin{align*}
\iota:\E'_n(q_1,q_2,q_3)\to \E'_n(q_3,q_2,q_1),
\end{align*}
given by 
\begin{align}
\iota:E_i(z)\mapsto E_{n-i}(z), \quad
F_i(z)\mapsto F_{n-i}(z),\quad  
K_i^{\pm}(z)\mapsto K_{n-i}^{\pm}(z)\,,
\label{iota}
\end{align}
and  $\iota(C)=C$. 

We have also the Miki automorphism $\theta$ 
which interchanges vertical and horizontal subalgebras, see \cite{Mi}. 
We fix the definition  of $\theta$ as in \cite{FJMM}.
We remark that $\theta$ is defined for algebra $\E_n$, not $\E'_n$, but 
expressions such as $\theta^{-1}(K_i^\pm(z))$ have a well-defined meaning in $\E'_n$ as well.
In particular, we have for $n\ge 2$,
\begin{align}
&\theta^{-1}(H_{i,1})=
-(-d)^{-i}[[\cdots[[\cdots [F_{0,0},F_{n-1,0}]_{q},\cdots,F_{i+1,0}]_{q},
F_{1,0}]_{q},\cdots,F_{i-1,0}]_{q},F_{i,0}]_{q^{2}}\,,
\label{Hi1}\\
&\theta^{-1}(H_{i,-1})=
-(-d)^{i}[E_{i,0},[E_{i-1,0},\cdots,[E_{1,0},[E_{i+1,0},\cdots,
[E_{n-1,0},E_{0,0}]_{q^{-1}},\cdots]_{q^{-1}}]_{q^{-1}},\cdots]_{q^{-1}}]_{q^{-2}}\,
\label{Hi-1}
\end{align}
for $1\le i\le n-1$, and
\begin{align}
&\theta^{-1}(H_{0,1})=
-(-d)^{-n+1}[[\cdots
[F_{1,1},F_{2,0}]_{q},\cdots,F_{n-1,0}]_{q},F_{0,-1}]_{q^2}\,,
\label{H01}
\\
&\theta^{-1}(H_{0,-1})=
-(-d)^{n-1}[E_{0,1},[E_{n-1,0},\cdots,[E_{2,0},E_{1,-1}]_{q^{-1}}
\cdots ]_{q^{-1}}]_{q^{-2}}\,.
\label{H0-1}
\end{align}
In addition 
\begin{align}
&\theta(E_{0,0})=d^{-1}[F_{n-1,0},\ldots,[F^-_{2,0},F^-_{1,-1}]_q\ldots]_qK_1\cdots K_{n-1}\,,
\label{thetaE}\\ 
&\theta(F_{0,0})=d(K_1\cdots K_{n-1})^{-1}[\ldots [E_{1,1},E_{2,0}]_{q^{-1}},\ldots,E_{n-1,0}]_{q^{-1}}
\label{thetaF}\,.
\end{align}

We define  $U^h_q\widehat{\gl}_n$ to be the subalgebra generated by
 $U^h_q\widehat{\sln}_n$ and $\theta^{-1}(K_0^\pm(z))$.  

\subsection{Fused currents}\label{sec:fusion}
We recall the fused currents of \cite{FJMM} and compute the commutation relations with generators of 
$U'_q\widehat{\gl}_n$. 

It is convenient to use series of generators of $U'_q\widehat{\gl}_n$ defined by
$E_i(z)=x_i^+(d^iz)$, $F_i(z)=x_i^-(d^iz)$, 
$K^\pm_i(z)=\phi_i^\pm(d^iz)$, $1\le i\le n-1$, cf. \eqref{embed}.

Following \cite{FJMM} let us introduce the following elements
of  $\widetilde{U}'_q\widehat{\gl}_n$: 
\begin{align}
&\ssE(z)=
\prod_{i=1}^{n-2}\bigl(1-\frac{z_{i}}{z_{i+1}}\bigr)\cdot
E_{n-1}(q_3^{n/2-1}z_{n-1})\cdots 
E_2(q_3^{-n/2+2}z_2)E_1(q_3^{-n/2+1}z_1)
\Bigl|_{z_1=\cdots=z_{n-1}=z}\,,
\label{ssE}\\
&\ssF(z)=
\prod_{i=1}^{n-2}\bigl(1-\frac{z_{i+1}}{z_{i}}\bigr)\cdot
F_1(q_3^{-n/2+1}z_1)F_2(q_3^{-n/2+2}z_2)
\cdots  F_{n-1}(q_3^{n/2-1}z_{n-1})
\Bigl|_{z_1=\cdots=z_{n-1}=z}\,,
\label{ssF}\\
&\ssK^\pm(z)=\prod_{i=1}^{n-1}K^\pm_i\bigl(q_3^{-n/2+i}z\bigr)\,.
\label{ssK}
\end{align}
When $n=2$ we have $\ssE(z)=E_1(z)$, $\ssF(z)=F_1(z)$,
$\ssK^\pm(z)=K^\pm_1(z)$.

The following result is a special case of the construction in 
\cite{FJMM}.
\begin{prop}\cite{FJMM}
For $n\ge2$, the currents \eqref{ssE}--\eqref{ssK} satisfy
\begin{align}
&[\ssE(z),\ssF(w)]=\frac{1}{q-q^{-1}}\Bigl(
\delta\bigl(C\frac{w}{z}\bigr)\ssK^+(z)
-
\delta\bigl(C\frac{z}{w}\bigr)\ssK^-(z)
\Bigr) \,,
\label{EFfuse}\\
&(z-q_2w)\ssE(z)\ssE(w)+(w-q_2z)\ssE(w)\ssE(z)=0\,,
\label{EEfuse}\\
&(w-q_2z)\ssF(z)\ssF(w)+(z-q_2w)\ssF(w)\ssF(z)=0\,,
\label{FFfuse}\\
&[\ssE(z),E_i(w)]=[\ssE(z),F_i(w)]=0\,
\quad (2\le i\le n-2),
\label{EEiFi}\\
&[E_i(z),\ssF(w)]=[F_i(z),\ssF(w)]=0\,\quad (2\le i\le n-2).
\label{FEiFi}
\end{align} 
\end{prop}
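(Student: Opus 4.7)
The plan is to verify the five identities by direct computation from the Drinfeld-new defining relations of $U'_q\widehat{\gl}_n$, exploiting the fact that the spectral shifts $q_3^{-n/2+i}$ and the prefactor $\prod_i(1 - z_i/z_{i+1})$ in \eqref{ssE}--\eqref{ssK} are precisely tuned so that the rational functions produced by reordering adjacent currents telescope into the stated shapes and apparent poles cancel. For $n=2$ the fused currents coincide with the simple currents and the identities reduce to the defining relations, so I assume $n \ge 3$ and proceed in increasing order of difficulty.

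I would first dispose of the middle-index commutativity relations \eqref{EEiFi} and \eqref{FEiFi}. Fix $2 \le i \le n-2$. Since $E_i(w)$ commutes with $E_j(z_j)$ whenever $|i-j| \ge 2$, the identity $[\ssE(z), E_i(w)] = 0$ reduces to the passage of $E_i(w)$ past the three factors $E_{i+1}(\cdot z_{i+1}) E_i(\cdot z_i) E_{i-1}(\cdot z_{i-1})$ inside $\ssE$; applying the quadratic $E$-$E$ relations in sequence and the Serre relation at index $i$ makes the difference vanish, and the prefactor tames the induced poles at the specialization $z_j=z$. For $[\ssE(z), F_i(w)] = 0$, only $[E_i(\cdot z_i), F_i(w)]$ contributes a $\delta$-function residue involving $K_i^\pm$; after pushing $K_i^\pm$ past $E_{i\pm 1}(\cdot z_{i\pm 1})$ via \eqref{KE}, the rational factors $g_{i, i\pm 1}$ have zeros precisely where the $\delta$-functions are supported, killing the residue. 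Relations \eqref{FEiFi} follow by a symmetric argument. Next, for the quadratic relations \eqref{EEfuse} and \eqref{FFfuse}, I would expand $\ssE(z)\ssE(w)$ as a product of $2(n-1)$ simple currents and reorder into the swapped form using the adjacent quadratic $E$-$E$ relations; the shifts $q_3^{-n/2+i}$ make the accumulated rational factors telescope so that, upon specialization, the net coefficient reduces to $(z - q_2 w)/(w - q_2 z)$, yielding \eqref{EEfuse}, with the prefactor supplying the zeros needed to cancel spurious poles.

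The most delicate step, which I would tackle last, is the EF relation \eqref{EFfuse}. Moving each $F_j(\cdot w_j)$ of $\ssF(w)$ past the $E_k(\cdot z_k)$'s of $\ssE(z)$, only the diagonal crossings $[E_j(\cdot z_j), F_j(\cdot w_j)]$ contribute, each yielding a $\delta$-function times $K_j^\pm$; these $K_j^\pm$ are then carried outward via \eqref{KE} and \eqref{KF}, accumulating rational factors. The main obstacle will be showing that the $n-1$ summands (one per index $j$) assemble telescopically into a single pair of $\delta$-functions times $\ssK^\pm(z) = \prod_{i=1}^{n-1} K_i^\pm(q_3^{-n/2+i}z)$: the $\delta$-functions must all collapse to the same support $w = C^{\pm 1}z$ once the shifts are accounted for, and the accumulated rational factors from the $K$-$E$ and $K$-$F$ passages must cancel in pairs across successive indices. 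Once this bookkeeping is in place, one obtains exactly \eqref{EFfuse}.
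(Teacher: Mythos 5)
The paper does not prove this proposition; it cites it from \cite{FJMM} with the remark that it is ``a special case of the construction'' there. So there is no internal proof to compare against. Your plan of verifying the identities directly by tracking the rational factors produced by reordering adjacent currents and showing that the prefactor $\prod_i(1-z_i/z_{i+1})$ tames the apparent poles is entirely in the spirit of the paper's own proof of the \emph{next} proposition (the one establishing \eqref{E1E}--\eqref{FE1n}), and it is a reasonable route. However, several of your stated mechanisms are off, and in one place a claimed cancellation simply does not occur.

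For $[\ssE(z),E_i(w)]=0$ with $2\le i\le n-2$, no Serre relation is used. Commuting $E_i(w)$ across $E_{i+1}(q_3^{-n/2+i+1}z_{i+1})$, $E_i(q_3^{-n/2+i}z_i)$, $E_{i-1}(q_3^{-n/2+i-1}z_{i-1})$ via the three quadratic relations produces the scalar
\[
-\frac{(cz_{i-1}-q_1 w)(bz_i-q_2 w)(az_{i+1}-q_3 w)}{(w-q_3 cz_{i-1})(w-q_2 bz_i)(w-q_1 az_{i+1})},\qquad a=q_3^{-n/2+i+1},\ b=q_3^{-n/2+i},\ c=q_3^{-n/2+i-1},
\]
and at the specialization $z_{i-1}=z_i=z_{i+1}=z$ this collapses, using $q_1q_2q_3=1$, to exactly $1$. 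Nothing else needs to vanish, and the prefactor plays no role here beyond its usual job of making $\ssE(z)$ well-defined.

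For $[\ssE(z),F_i(w)]=0$ with $2\le i\le n-2$, your claim that ``the rational factors $g_{i,i\pm1}$ have zeros precisely where the $\delta$-functions are supported'' is incorrect. Replacing the middle factor $E_i(bz_i)$ by the commutator $[E_i(bz_i),F_i(w)]=\frac{1}{q-q^{-1}}\bigl(\delta(Cw/bz_i)K_i^+(w)-\delta(Cbz_i/w)K_i^-(bz_i)\bigr)$ removes the OPE poles at $z_{i-1}=z_i$ and $z_i=z_{i+1}$ that originally compensated the prefactor zeros $(1-z_{i-1}/z_i)(1-z_i/z_{i+1})$. Carrying $K_i^\pm$ outward through $E_{i\mp1}$ via \eqref{KE} reinstates exactly one simple pole (e.g.\ $g_{i,i-1}(Cw,cz_{i-1})=Cw-q_3cz_{i-1}$ reduces, on the $\delta$-support, to $q_3^{-n/2+i}(z_i-z_{i-1})$ in a denominator), not a zero at the $\delta$-support. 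So the accounting is: two prefactor zeros, one compensating pole, hence the term vanishes at the specialization. Finally, for the $E$-$F$ relation \eqref{EFfuse}, your expectation that the ``$n-1$ summands assemble telescopically'' is misleading: once \eqref{EEiFi} is in hand, the $n-3$ middle Leibniz terms vanish outright; only the $j=1$ and $j=n-1$ crossings survive, each already delta-supported (cf.\ the paper's \eqref{EF1n}), producing the two terms of \eqref{EFfuse} directly rather than by a telescoping sum.
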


In addition we calculate the other commutation relations.
\begin{prop}
For $n\ge3$, we have
\begin{align}
&(z-q_3^{-n/2}q_1^{-1}w)E_1(z)\ssE(w)=q(z-q_3^{-n/2+1}w)\ssE(w)E_1(z)\,,
\label{E1E}\\
&(z-q_3^{n/2-1}w)E_{n-1}(z)\ssE(w)=q(z-q_3^{n/2}q_1^{-1}w)\ssE(w)E_{n-1}(z)\,,
\label{EnE}\\
&(z-q_3^{-n/2+1}w)F_1(z)\ssF(w)=q^{-1}(z-q_3^{-n/2}q_1^{-1}w)\ssF(w)F_1(z)\,,
\label{F1F}\\
&(z-q_3^{n/2}q_1w)F_{n-1}(z)\ssF(w)=q^{-1}(z-q_3^{n/2-1}w)\ssF(w)F_{n-1}(z)\,,
\label{FnF}\\
&(z-C^{-1}q_3^{n/2-1}w)[\ssE(z),F_1(w)]=
(z-Cq_3^{-n/2+1}w)[\ssE(z),F_{n-1}(w)]=0\,,
\label{EF1n}\\
&(z-Cq_3^{-n/2+1}w)[E_1(z),\ssF(w)]=
(z-C^{-1}q_3^{n/2-1}w)[E_{n-1}(z),\ssF(w)]=0\,.
\label{FE1n}
\end{align}
\end{prop}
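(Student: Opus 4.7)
The plan is to prove each identity directly from the defining relations of $U'_q\widehat{\gl}_n$, using the definitions of $\ssE,\ssF$ and taking the fusion limit $z_1,\dots,z_{n-1}\to w$. The core mechanism is a pole--zero cancellation between the fusion prefactor $\prod_i(1-z_i/z_{i+1})$ and the exchange relations, supplemented (for the commutator identities) by ``vanishing identities'' at isolated argument values.

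\emph{Proof of \eqref{E1E}--\eqref{FnF}.} I focus on \eqref{E1E}; the other three follow by the same argument, or by combining the involution $\iota$ of \eqref{iota} with the natural $E\leftrightarrow F$ symmetry. Since $E_1(z)$ commutes with $E_j(w')$ for $|j-1|\ge 2$, moving $E_1(z)$ past the product inside $\ssE(w)$ only picks up contributions from the two neighbors $E_2(q_3^{-n/2+2}z_2)$ and $E_1(q_3^{-n/2+1}z_1)$. The quadratic $E$-$E$ relations give
\[
E_1(z)\,E_2(\tilde z_2)E_1(\tilde z_1)=R(z,z_1,z_2)\,E_2(\tilde z_2)E_1(\tilde z_1)\,E_1(z)
\]
for an explicit ratio $R$ of products of linear polynomials. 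Substituting $\tilde z_i=q_3^{-n/2+i}z_i$ and using $q_1q_3=q_2^{-1}$, the factor $(z-q_2^{-1}q_3^{-n/2+1}w)$ appears in both the numerator and denominator of $R$ and cancels. The fusion limit $z_1=z_2=w$ then produces exactly the ratio $q(z-q_3^{-n/2+1}w)/(z-q_1^{-1}q_3^{-n/2}w)$ claimed by \eqref{E1E}.

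\emph{Proof of \eqref{EF1n}--\eqref{FE1n}.} Consider $[\ssE(z),F_1(w)]$. Since $F_1$ commutes with $E_j$ for $j\ne 1$,
\[
[\ssE(z),F_1(w)]=\prod_{i=1}^{n-2}\bigl(1-\tfrac{z_i}{z_{i+1}}\bigr)E_{n-1}(\tilde z_{n-1})\cdots E_2(\tilde z_2)\,[E_1(\tilde z_1),F_1(w)]\Big|_{z_i\to z},
\]
and \eqref{EF} splits the inner commutator into two delta summands, supported at $z=Cq_3^{n/2-1}w$ (carrying $K_1^+(w)$) and at $z=C^{-1}q_3^{n/2-1}w$ (carrying $K_1^-(\tilde z_1)$). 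The explicit factor $(z-C^{-1}q_3^{n/2-1}w)$ annihilates the second summand on sight. For the first, at its support $\tilde z_2=q_3Cw$, and the prefactor $(1-z_1/z_2)$ cancels the pole of the $K_1^+$-$E_2$ swap, leaving a finite multiple of $K_1^+(w)E_2(q_3Cw)$ times the remaining $E_3,\dots,E_{n-1}$ factors. The identity
\[
K_1^+(w)\,E_2(q_3Cw)=0,
\]
obtained by specializing $w'=q_3Cw$ in the $K_1^+$-$E_2$ commutation relation (both sides of which acquire a vanishing prefactor), then kills the first summand, since $K_1^+$ commutes with $E_3,\dots,E_{n-1}$. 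The case $[\ssE(z),F_{n-1}(w)]$ is entirely parallel with the roles of the two deltas swapped: the explicit factor $(z-Cq_3^{-n/2+1}w)$ kills the $K_{n-1}^+$ delta, and the companion identity $E_{n-2}(q_3^{-1}u)K_{n-1}^-(u)=0$, applied at $u=C^{-1}w$, kills the $K_{n-1}^-$ delta. Relations \eqref{FE1n} follow by the same reasoning applied to $\ssF(w)$, using the dual identities $F_1(z')K_1^\pm(u)\cdots=0$ at the analogous arguments.

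The delicate point throughout is to simultaneously manage three phenomena at each delta support: (i) the zero of the fusion prefactor $(1-z_i/z_{i+1})$, (ii) the pole of the $K^\pm$-$E_{i\pm 1}$ swap at the corresponding argument, and (iii) the vanishing identity of the form $K_i^\pm(u)E_{i\pm 1}(\text{special})=0$ that is forced by the $K$-$E$ commutation relation. Once (i) and (ii) are matched so as to yield a finite multiplier and (iii) is deployed to annihilate the resulting operator, each delta contribution either vanishes outright or is killed by the stated $(z-a)$ factor, and the proposition follows.
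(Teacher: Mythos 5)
Your argument for \eqref{E1E}--\eqref{FnF} is essentially the paper's: you move $E_1(z)$ across the two non-commuting neighbors inside $\ssE(w)$, observe that the linear factor $(z-q_2^{-1}q_3^{-n/2+1}w)$ cancels between numerator and denominator, and take the fusion limit. That part is fine.

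For \eqref{EF1n}--\eqref{FE1n}, your route differs from the paper's and has a genuine gap. After the explicit linear factor kills the $K_1^-$ delta, the surviving contribution reads
\[
\prod_{i=1}^{n-2}\bigl(1-\tfrac{z_i}{z_{i+1}}\bigr)\,
E_{n-1}(\cdot)\cdots E_2(\cdot)\,
\delta\bigl(Cq_3^{n/2-1}w/z_1\bigr)\,K_1^+(w).
\]
The paper's observation is that once $E_1$ has been traded for $K_1^+(w)$, there is no longer any pole at $z_1=z_2$: $K_1^+(w)$ is a power series in $w^{-1}$ sitting at the far right, so nothing cancels the zero of $(1-z_1/z_2)$, and the term vanishes outright when $z_1=\cdots=z_{n-1}=z$. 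You instead reverse the order $E_2(\cdot)K_1^+(w)\rightsquigarrow K_1^+(w)E_2(\cdot)$, pick up a rational factor with a pole there, cancel it against $(1-z_1/z_2)$, and then invoke the ``identity'' $K_1^+(w)E_2(q_3Cw)=0$ obtained by plugging a special argument into the $K$-$E$ relation. That specialization is not legitimate: $K_1^+(w)E_2(q_3Cw)$ is not a well-defined element even of the completion $\widetilde{U}'_{q}\widehat{\gl}_n$, since each power of $w$ collects an infinite sum of products $K^+_{1,-r}E_{2,k}$ all of the same homogeneous degree. The $K$-$E$ relation holds after clearing denominators (as a polynomial identity in two independent variables), and it does not license evaluating both currents along the zero locus of a prefactor. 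Worse, the swap you perform is the very step that manufactures the pole you then cancel, so the argument is circular. The same objection applies to the companion identity $E_{n-2}(q_3^{-1}u)K_{n-1}^-(u)=0$ you use for $[\ssE(z),F_{n-1}(w)]$. The fix is to drop the swap and the ``vanishing identities'' entirely and argue, as the paper does, that the surviving $K^{\pm}$ sits at the boundary of the fused product as a one-sided power series, so the extra fusion prefactor has no pole to compensate and annihilates the term directly.
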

\begin{proof}
As an example we consider \eqref{EF1n}. 
From the defining relations \eqref{EF} we have
\begin{align*}
&
(q-q^{-1})
[E_{n-1}(q_3^{n/2-1}z_{n-1})\cdots E_1(q_3^{-n/2+1}z_1),F_1(w)]
\\
&=
E_{n-1}(q_3^{n/2-1}z_{n-1})\cdots E_{2}(q_3^{-n/2+2}z_{2})
\Bigl(\delta\bigl(Cq_3^{n/2-1}\frac{w}{z_1}\bigr) K^+_1(w)
-
\delta\bigl(Cq_3^{-n/2+1}\frac{z_1}{w}\bigr) K^-_1(q_3^{-n/2+1}z_1)
\Bigr)\,.
\end{align*}
Upon multiplying by $z_1-C^{-1}q_3^{n/2-1}w$,  the second term in the right hand side 
vanishes. 
The first term does not have a pole at $z_1=z_2$
because $K_1^+(w)$ is a power series in $w^{-1}$ placed at the rightmost.  
Multiplying further by $\prod_{i=1}^{n-2}(1-z_i/z_{i+1})$ and setting 
$z_1=\cdots=z_{n-1}=z$ we find 
\[
(z-C^{-1}q_3^{n/2-1}w)[\ssE(z),F_1(w)]=0
\]
as desired.

The rest of the relations can be shown in a similar manner. 
\end{proof}

\subsection{The subalgebra $\mathcal A$}\label{sec:A}
We also recall from \cite{FJMM} the fusion construction of subalgebras 
\begin{align*}
&\E'_{1,m}=\E'_1(q_{1,m},q_2,q_{3,m}) ~
\subset \widetilde{\E}_n'(q_1,q_2,q_3)
\quad (0\le m\le n-1),
\\
&q_{1,m}=q_1^{m+1}q_3^{-n+m+1}\,,\quad 
q_{3,m}=q_3^{n-m}q_1^{-m}\,.
\end{align*}
which mutually commute and intersect only by the central element $C$.

The algebra $\E'_{1,m}$ is generated by the fused currents 
\begin{align*}
-\frac{qq_{3,m}}{1-q_{3,m}}
\ssE_m(z)&=\prod_{i=0}^{n-2}\bigl(1-\frac{z_{i}}{z_{i+1}}\bigr)\cdot
 E_{0}(q_3^{n}z_0) E_{n-1}(q_3^{n-1}z_{n-1}) \cdots E_{m+1}(q_3^{m+1}z_{m+1})
\\
&\times 
E_1(q_3^{n}q_1^{-1}z_1) \cdots E_{m}(q_3^{n}q_1^{-m}z_{m}) 
\Bigl|_{z_0=\cdots=z_{n-1}=z}\,,
\\
(1-q_{1.m})
\ssF_m(z)&=\prod_{i=0}^{n-2}\bigl(1-\frac{z_{i+1}}{z_{i}}\bigr)\cdot
F_{m}(q_3^{n}q_1^{-m}z_{m})\cdots F_1(q_3^{n}q_1^{-1}z_1)
\\
& \times 
F_{m+1}(q_3^{m+1}z_{m+1})\cdots F_{n-1}(q_3^{n-1}z_{n-1})F_{0}(q_3^{n}z_0)
\Bigl|_{z_0=\cdots=z_{n-1}=z}\,,
\\
\ssK_m^\pm(z)&=\prod_{i=m+1}^{n-1}K_i^\pm(q_3^iz)\ \prod_{i=0}^mK_i^\pm(q_1^{-i}q_3^nz).
\end{align*}

Let 
$\mathcal{A}$ be the algebra generated by subalgebras $\E'_{1,m}$,  $0\le m\le n-1$.

We have 
\begin{equation}\label{A}
\mathcal{A}=  \E'_{1,0}\otimes\E'_{1,1}\otimes\cdots\otimes \E'_{1,n-1}\subset \widetilde{\E}'_n(q_1,q_2,q_3)\,.
\end{equation}
The algebra $\mc A$ can be considered as an analog of the
Gelfand-Zeitlin subalgebra in $\widetilde{\E}'_n$.

\section{Quantum affine evaluation map}\label{sec:map}
In this section we define and prove the evaluation map in the quantum toroidal setting.

\subsection{The definition of the quantum affine evaluation map}
From now on, we consider the quotient algebra of $\E'_n$ by the relation
\begin{align*}
C=q_3^{n/2}\,.
\end{align*}
Denote this quotient by $\E^{(3)}_{n}$. 

Introduce currents $A_\pm(z)=\sum_{r>0}A_{\pm r}z^{\mp r}$,
$B_\pm(z)=\sum_{r>0}B_{\pm r}z^{\mp r}$ in 
$U'_q\widehat{\gl}_n$ by setting 
\begin{align*}
&A_{-r}=\eta_r^{-1} C^{-r}
(H_{0,-r}+\sum_{i=1}^{n-1}q_3^{ir}H_{i,-r})\,,
\quad
A_{r}=-\eta_r^{-1} (H_{0,r}+\sum_{i=1}^{n-1}q_3^{(n-i)r}H_{i,r})\,,
\\
&B_{-r}=-\eta_r^{-1} (H_{0,-r}+\sum_{i=1}^{n-1}q_3^{-(n-i)r}H_{i,-r})\,,
\quad
B_r=\eta_r^{-1}  C^{r}
(H_{0,r}+\sum_{i=1}^{n-1}q_3^{-ir}H_{i,r})\,.
\end{align*}
Set further 
\begin{align*}
&\mathcal{K}
=q^{-{\Lambda}_1+{\Lambda}_{n-1}}.
\end{align*}
Note that $\mathcal{K}$ commutes with $\ssE(z),\ssF(z)$.

\begin{thm}\cite{Mi2} \label{thm:eval}
Let $u\in\C^\times$, and set $\kappa=q^{n/2}_3$. 
The following assignment gives a homomorphism of algebras 
$\ev^{(3)}_u:\E^{(3)}_n\to \widetilde{U}'_{q,\kappa}\widehat{\gl}_n$ such that 
$\ev^{(3)}_u\circ v=\id$:
\begin{align}
&E_0(z)\mapsto u^{-1}\, e^{A_-(z)}\ssF(z)e^{A_+(z)}
\mathcal{K}
\,,
\quad 
F_0(z)\mapsto u\  e^{B_-(z)}\ssE(z)e^{B_+(z)}
\mathcal{K}^{-1}
\,,
\label{E0F0}\\
&E_i(z)\mapsto E_i(z)\,,\quad F_i(z)\mapsto F_i(z)\quad (i=1,\dots,n-1),
\\
&K^\pm_i(z)\mapsto K_i^\pm(z)\quad (i=0,1,\dots,n-1),\quad
q^h\mapsto q^h\,\quad (h\in P)\,.
\end{align}
\end{thm}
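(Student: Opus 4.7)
The plan is to verify directly that the proposed assignment respects every defining relation of $\E_n^{(3)}$ from Section \ref{sec:En}. Since the images of $E_i(z), F_i(z), K_i^\pm(z)$ for $1 \le i \le n-1$ are themselves, every relation not involving an index-$0$ generator is automatically preserved; the constraint $K_0 = \prod_{i=1}^{n-1} K_i^{-1}$ of \eqref{KKKK} holds in $\widetilde{U}'_{q,\kappa}\widehat{\gl}_n$ as well, because $\alpha_0 + \cdots + \alpha_{n-1} = 0$, and $\ev^{(3)}_u \circ v = \id$ is immediate from the definitions. Thus the substantive work is to check the relations involving at least one of $E_0(z), F_0(z), K_0^\pm(z)$.

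The key preparatory step is OPE bookkeeping. I would first compute all pairwise commutators $[H_{j,r}, A_{\pm s}]$, $[H_{j,r}, B_{\pm s}]$, $[A_{\pm r}, A_{\pm s}]$, $[A_{\pm r}, B_{\pm s}]$, and $[B_{\pm r}, B_{\pm s}]$ using the $H$-$H$ bilinear form of $U'_q\widehat{\gl}_n$; the factor $\eta_r^{-1}$ built into the definitions of $A, B$ is chosen precisely to produce clean rational expressions. Exponentiating then yields OPE factors of the form
\[
e^{A_\pm(z)}\,\ssF(w) = f(w/z)\,\ssF(w)\,e^{A_\pm(z)},
\]
and analogous identities for $\ssE, \ssK^\pm$ and for the individual $E_j, F_j, K_j^\pm$. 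The rational functions $f$ will have zeroes and poles at exactly the powers of $q_3$ needed to cancel the scalar prefactors appearing in the fused-current relations \eqref{EFfuse}--\eqref{FE1n}. The specialization $C = q_3^{n/2}$ calibrates the Heisenberg cocycles, and the scalar $\mathcal{K} = q^{-\Lambda_1 + \Lambda_{n-1}}$ restores the Cartan prefactors needed to reassemble $K_0^\pm$.

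With these ingredients in hand, I would verify the remaining relations grouped by type. The $K_j^\pm$-$K_0^\pm$, $K_j^\pm$-$E_0$, and $K_j^\pm$-$F_0$ relations reduce, after conjugating through the exponential factors, to the corresponding relations for the fused currents, which in turn follow from the $K$-$E$ and $K$-$F$ relations of the target algebra. The bracket $[E_0(z), F_0(w)]$ is obtained by normal-ordering the two exponential factors via the $A$-$B$ commutator, substituting \eqref{EFfuse}, and collecting Cartan prefactors into $K_0^\pm$; the $u^{\mp 1}$ factors cancel. The $E_0$-$E_0$ and $F_0$-$F_0$ quadratic relations then follow from \eqref{EEfuse}--\eqref{FFfuse} combined with the $A$-$A$ and $B$-$B$ OPE. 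The mixed relations $E_0$-$E_j$, $F_0$-$F_j$, $[E_0, F_j]$, and $[F_0, E_j]$ with $j \in \{1, n-1\}$ are handled using \eqref{E1E}--\eqref{FE1n}; for $j \notin \{0, 1, n-1\}$ they reduce to \eqref{EEiFi}--\eqref{FEiFi} plus a trivial OPE computation.

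The hard part will be the Serre relations involving $E_0$ (or $F_0$) with $E_1$ or $E_{n-1}$. For $n \ge 3$ these are cubic and intertwine the exponential dressings with fused currents in a nontrivial way; I would reduce them, via the already-established quadratic $E_0$-$E_1$ and $E_0$-$E_{n-1}$ exchange relations, to Serre-type identities for $\ssF$ (or $\ssE$) against $E_1, E_{n-1}$, which in turn follow from the Serre relations in the vertical subalgebra. The case $n = 2$ requires a separate argument because the Serre relation is quartic and the fused currents collapse to $\ssE(z) = E_1(z), \ssF(z) = F_1(z)$; here one must exploit the $n=2$ forms \eqref{quad-rel2} of the quadratic $E$-$E$ and $F$-$F$ relations. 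Throughout, the role of $C = q_3^{n/2}$ in matching scalar prefactors is exactly what links the central-element constraint \eqref{levels} to the existence of the map.
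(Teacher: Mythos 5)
Your proposal follows the same broad strategy as the paper's Section~\ref{sec:proof}: set up the conjugation identities for $e^{A_\pm(z)}, e^{B_\pm(z)}$ against the currents (the paper records these as \eqref{A+EF}--\eqref{B+B-}), then verify the defining relations case by case, and treat the Serre relations as the delicate part. Your treatment of the easy cases, the $E$-$F$ relation via normal-ordering of exponentials and reassembly of Cartan factors into $K_0^\pm$, and the $E$-$E$, $F$-$F$ quadratics is accurate.

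The one place your description does not match what actually happens is the endgame of the Serre relations. You propose reducing them to ``Serre-type identities for $\ssF$ (or $\ssE$) against $E_1, E_{n-1}$, which in turn follow from the Serre relations in the vertical subalgebra.'' That is not how the cancellation works, and there are no such Serre-type identities for the fused currents available. What the paper does instead is conjugate out the exponentials, then use the simple-pole commutator structure \eqref{FE1n} (namely $(z-Cq_3^{-n/2+1}w)[E_1(z),\ssF(w)]=0$) to move $\ssF(w)$ past the $E_1$'s to one side without producing delta functions. After that, the three-term Serre bracket collapses to a \emph{single} ordered monomial multiplied by a rational prefactor proportional to $(z_1-q_2z_2)$ (respectively $(z_1-q_2^{-1}z_2)$), and the vanishing upon symmetrization comes from the \emph{quadratic} $E_1$-$E_1$ relation (respectively the quadratic $\ssF$-$\ssF$ relation \eqref{FFfuse}), not from any Serre relation. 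So the Serre relations for $\E'_n$ reduce to quadratic relations in $U'_q\widehat{\gl}_n$. This matters: the argument you sketched as stated would not close, because the identities you invoke at the last step do not exist. The actual mechanism is simpler than you anticipated, but you would need to discover it in the computation. For $n=2$ the same scheme works using \eqref{quad-rel2}, as you correctly flagged.
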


\medskip

\noindent{\it Remark.}\quad 
When $n=1$ we set formally $\ssE(z)=\ssF(z)=\mathcal{K}=1$. Then 
\eqref{E0F0} is nothing but the known vertex operator realization
 of $\E'_1$ 
for $C=q_3^{1/2}$. 

\medskip 

A proof of Theorem \ref{thm:eval} is provided in Section \ref{sec:proof}.

\medskip 

In the above theorem we have chosen the currents $E_0(z), F_0(z)$
to play a special role. In view of the cyclic symmetry of $\E_n'$ which sends 
$E_i(z)\mapsto E_{i+1}(z)$,
$F_i(z)\mapsto F_{i+1}(z)$, 
$K^\pm_i(z)\mapsto K^\pm_{i+1}(z)$, 
we could have started with $E_i(z), F_i(z)$ for any $i$. 

Using the isomorphism $\iota$ \eqref{iota} which interchanges $q_1$ with $q_3$,  
it is easy to write another evaluation homomorphism $\ev^{(1)}_u$
when $$C=q_1^{n/2}.$$ 
This is parallel to the fact that there are two evaluation homomorphisms $U'_q\widehat{\sln}_n\to U_q\gl_n$.

We also remark that the evaluation map is clearly graded with respect to the homogeneous degree, see \eqref{degree}, \eqref{tor degree}, and 
commutes with the automorphism which rescales the
spectral parameter, see (2.7) in \cite{FJMM}.

\subsection{Proof}\label{sec:proof}
In this section, we prove Theorem \ref{thm:eval}.
To simplify the notation we consider $\ev=\ev^{(3)}_1$. 

We shall need commutation relations between 
$A_\pm(z),B_{\pm}(z)$ and $E_i(w),F_i(w)$. 

First we have:
$$ [A_\pm(z),E_i(w)]=[A_\pm(z),F_i(w)]=[B_\pm(z),E_i(w)]=[B_\pm(z),F_i(w)]=0 \qquad (2\le i\le n-2).$$
 
Other relations are given as follows.
\begin{align}
&e^{A_+(z)}E_1(w)e^{-A_+(z)}=\frac{z-q_3^{-1}w}{z-q_1w}E_1(w)\,,
\quad 
e^{A_+(z)}F_1(w)e^{-A_+(z)}
=\frac{z-Cq_1w}{z-Cq_3^{-1}w}F_1(w)\,,
\label{A+EF}
\\
&e^{B_+(z)}E_{n-1}(w)e^{-B_+(z)}=
\frac{z-C^{-1}q_1^{-1}w}{z-C^{-1}q_3w}E_{n-1}(w)\,,
\quad
e^{B_+(z)}F_{n-1}(w)e^{-B_+(z)}=\frac{z-q_3w}{z-q_1^{-1}w}F_{n-1}(w)\,,
\label{B+EF}
\\
&e^{-A_-(z)}E_{n-1}(w)e^{A_-(z)}=
\frac{w-q_3^{-1}z}{w-q_1z}E_{n-1}(w)\,,
\quad
e^{-A_-(z)}F_{n-1}(w)e^{A_-(z)}=
\frac{w-Cq_1z}{w-Cq_3^{-1}z}F_{n-1}(w)\,,
\label{A-EF}\\
&e^{-B_-(z)}E_1(w)e^{B_-(z)}
=\frac{w-C^{-1}q_1^{-1}z}{w-C^{-1}q_3z}E_1(w)\,,
\quad 
e^{-B_-(z)}F_1(w)e^{B_-(z)}
=\frac{w-q_3z}{w-q_1^{-1}z}F_1(w)\,.
\label{B-EF}
\end{align}
We have also
\begin{align}
&e^{A_+(z)}e^{A_-(w)}=\frac{(z-w)^2}{(z-q_2w)(z-q_2^{-1}w)}
e^{A_-(w)}e^{A_+(z)}\,,
\label{A+A-}\\
&e^{A_+(z)}e^{B_-(w)}
\frac{(z-Cq_2w)(z-C^{-1}q_2^{-1}w)}{(z-Cw)(z-C^{-1}w)}
e^{B_-(w)}e^{A_+(z)}\,,
\label{A+B-}\\
&e^{B_+(z)}e^{A_-(w)}=
\frac{(z-Cq_2w)(z-C^{-1}q_2^{-1}w)}{(z-Cw)(z-C^{-1}w)}
e^{A_-(w)}e^{B_+(z)}
\,,
\label{B+A-}\\
&e^{B_+(z)}e^{B_-(w)}=\frac{(z-w)^2}{(z-q_2w)(z-q_2^{-1}w)}
e^{B_-(w)}e^{B_+(z)}\,.
\label{B+B-}
\end{align}

Let us verify the relations involving $E_0(z),F_0(z)$ case-by-case.
\bigskip

\noindent{\it  $C,q^h$ relations.}\quad
These are easy to check. 
\bigskip

\noindent{\it $H$-$E$ and $H$-$F$ relations.}\quad
The relation
\begin{align*}
[\ev(H_{i,r}),\ev(E_0(z))]=a_{i,0}(r)C^{-r}z^r \ev(E_0(z)) \quad (r\ge 1)
\end{align*}
follows from $C=q_3^{n/2}$  and
\begin{align*}
&[H_{i,r},e^{A_-(z)}]=z^re^{A_-(z)}\times
C^{-r}\bigl(a_{i,0}(r)+\sum_{j=1}^{n-1}a_{i,j}(r)
q_3^{jr}\bigr)\,,\\
&[H_{i,r},\ssF(z)]=-z^r\ssF(z)\times\sum_{j=1}^{n-1}
 a_{i,j}(r)\bigl(q_3^{-n/2+j}\bigr)^r\,.
\end{align*}
The relations for $r\le -1$ and for 
$[\ev(H_{i,r}),\ev(F_0(z))]$ can be verified in a similar manner.
\bigskip

\noindent{\it  $E$-$F$ relations.}\quad
First consider the relation
\begin{align*}
[\ev(E_0(z)),\ev(F_i(w))] =0\quad (i\neq 0).
\end{align*}
We have
\begin{align*}
&\ev(E_0(z))\ev(F_i(w))=e^{A_-(z)}\ssF(z)F_i(w)e^{A_+(z)}\mathcal{K}
\times \begin{cases}
	q(z-Cq_1w)/(z-Cq_3^{-1}w)& (i=1)\\
        q^{-\delta_{i,n-1}}& (2\le i\le n-1)\\
       \end{cases}\,,
\\
&\ev(F_i(w))\ev(E_0(z))=e^{A_-(z)}F_i(w)\ssF(z)e^{A_+(z)}\mathcal{K}
\times \begin{cases}
	1& (1\le i\le n-2)\\
        (w-Cq_1z)/(w-Cq_3^{-1}z)& (i= n-1)\\
       \end{cases}\,,
\end{align*}
so that the relations reduce to \eqref{F1F}, \eqref{FnF} and \eqref{FEiFi}. 
The case $[\ev(E_i(z)),\ev(F_0(w))] =0$ ($i\neq0$) is similar.

Using \eqref{A+EF}, \eqref{B-EF} and \eqref{A+B-} we obtain
\begin{align*}
\ev(E_0(z))\ev(F_0(w))&=
e^{A_-(z)}\ssF(z)e^{A_+(z)}\mathcal{K}\cdot
e^{B_-(z)}\ssE(z)e^{B_+(z)}\mathcal{K}^{-1}
\\
&=e^{A_-(z)+B_-(w)}\ssF(z)\ssE(w)e^{A_+(z)+B_+(w)}\,.
\end{align*}
Computing $\ev(F_0(w))\ev(E_0(z))$
similarly and using further \eqref{EFfuse}, we find
\begin{align*}
&[\ev(E_0(z)),\ev(F_0(w))]= \frac{1}{q-q^{-1}}\\
&\times
e^{A_-(z)+B_-(w)}\bigl(-\delta\Bigl(C\frac{z}{w}\bigr)\ssK^+(z)
+\delta\bigl(C\frac{w}{z}\bigr)\ssK^-(w)
\Bigr)
e^{A_+(z)+B_+(w)}\,.
\end{align*}
Noting that
\begin{align*}
&e^{A_-(z)+B_-(Cz)}=\bar K_0^-(z)\,,\quad
 e^{A_+(Cw)+B_+(w)}=\bar K_0^+(w)\,,
\\
&e^{A_-(Cw)+B_-(w)}=K_0 \ssK^-(w)^{-1}\,,
\quad
 e^{A_+(z)+B_+(Cz)}=K_0^{-1}\ssK^+(z)^{-1}\,,
\end{align*}
we obtain the desired result.
\bigskip

\noindent{\it $E$-$E$ and $F$-$F$ relations.}\quad
To check the quadratic relations
\begin{align*}
d_{0,j}g_{0,j}(z,w)\ev(E_0(z))\ev(E_j(w))+
g_{j,0}(w,z)\ev(E_j(w))\ev(E_0(z))=0,  
\end{align*}
we proceed in the same way as above; using \eqref{A+EF}, \eqref{A-EF}
we bring $A_+(z)$ to the right, $A_-(z)$ to the left, and apply
\eqref{E1E}, \eqref{EnE}. Verification of the
$F$-$F$ relations is entirely similar.
\bigskip

\noindent{\it Serre relations.}\quad Let us check the Serre relations 
assuming $n\ge3$. We have
\begin{align*}
&e^{-A_-(w)}  
[
\ev(E_1(z_1)),[\ev(E_1(z_2)),\ev(E_0(w))]_q]_{q^{-1}}\
e^{-A_+(w)}\\
&=  
E_1(z_1)E_1(z_2)\ssF(w)-(q+q^{-1})q^{-1}\frac{w-q_3^{-1}z_2}{w-q_1z_2}
E_1(z_1)\ssF(w)E_1(z_2)\\
&+q^{-2}\frac{w-q_3^{-1}z_1}{w-q_1z_1}\frac{w-q_3^{-1}z_2}{w-q_1z_2}
\ssF(w)E_1(z_1)E_1(z_2)\,.
\end{align*}
In view of \eqref{FE1n}, we can move $\ssF(w)$ to the right without producing
delta functions. After simplification, the right hand side becomes
\begin{align*}
-\frac{(1-q_2^{-1})w}{(w-q_1z_1)(w-q_1z_2)}(z_1-q_2z_2)E_1(z_1)E_1(z_2)\ssF(w)\,. 
\end{align*}
Symmetrizing in $z_1,z_2$ we obtain $0$ due to the quadratic relations for
$E_1(z)$. 

Likewise we compute
\begin{align*}
&e^{-A_-(z_1)-A_-(z_2)} [\ev(E_0(z_1)),[\ev(E_0(z_2)),
\ev(E_1(w))]_q]_{q^{-1}}\
e^{-A_+(z_1)-A_+(z_2)}\\
&=\frac{z_1-q_2^{-1}z_2}{z_1-q_2z_2}
\Bigl(
q^{-2}\frac{z_1-q_3^{-1}w}{z_1-q_1w}\frac{z_2-q_3^{-1}w}{z_2-q_1w}
\ssF(z_1)\ssF(z_2)E_1(w)\\
&-(q+q^{-1})q^{-1}\frac{z_1-q_3^{-1}w}{z_1-q_1w}
\ssF(z_1)E_1(w)\ssF(z_2)
+E_1(w)\ssF(z_1)\ssF(z_2)\Bigr)\,
\\
&=
\frac{q_2^{-1}(q_1-q_3^{-1})w}
{(z_1-q_1w)(z_2-q_1w)}(z_1-q_2^{-1}z_2)E_1(w)\ssF(z_1)\ssF(z_2)\,. 
\end{align*}
Due to \eqref{FFfuse}, the last line vanishes 
after symmetrization. 

Serre relations in the remaining cases (including the case $n=2$)
can be verified by the same argument.
We omit further details.
\bigskip

The proof is over.

\section{Evaluation modules}\label{sec:comments}
In this section we define and discuss the evaluation modules.

\subsection{Evaluation modules}
Recall the grading of $U'_q\widehat{\gl}_n$  given by \eqref{degree}.
We say that a $U'_q\widehat{\gl}_n$ module $V$ is  admissible if 
for any $v\in V$ there exists an $N$ such that $xv=0$ holds for any
$x\in U'_q\widehat{\gl}_n$ with $\deg x>N$. 
Algebra $\widetilde{U}'_{q,\kappa}\widehat{\gl}_n$ has a well-defined action
on admissible modules of level $\kappa$.

The quantum affine evaluation map 
$\ev^{(3)}_u$ goes from the quantum toroidal 
$\gl_n$ algebra $\E'_n(q_1,q_2,q_3)$ 
 to the (completed) quantum affine $\gl_n$ algebra, provided that $q_3$ has the special value related to the central charge by 
$C=q_3^{n/2}$. 
Note that for the quantum affine algebra 
the value for the central element is completely arbitrary as $q$ and $q_3$ are independent variables.

It follows that any admissible representation $V$ of $U_q'\widehat{\gl}_n$ 
on which $C$ acts as an arbitrary scalar $\kappa$
can be pulled back by $\ev^{(3)}_u$ to a representation of $\E^{(3)}_n$, 
by choosing $q_3$ so that $\kappa=q_3^{n/2}$. 
We call the resulting $\E^{(3)}_n$ module the evaluation module and denote it by $V(u)$.

\subsection{Highest weight evaluation modules}\label{hw:sec}
An example of admissible modules is given by highest weight modules.

A  $U_q'\widehat{\gl}_n$ module $V$ is called a highest weight module of highest weight
$(\kappa_0,\ldots,\kappa_{n-1})\in(\C^\times)^n$ with highest weight vector $v$
if $v$ is a cyclic vector in $V$ satisfying
\begin{align*}
&e_j v=0\quad (1\le j\le n-1)\,, \qquad e_0^{(l)}v=0\,,
\qquad Z_rv=0\quad (r\ge1),
\\
&K_iv= \kappa_iv \quad (1\le i \le n-1),\qquad 
Cv=\prod_{i=0}^{n-1}\kappa_i\cdot v\,,
\end{align*}
where $Z_r$ is defined in \eqref{Zr} and $e_0^{(l)}$ is the left Chevalley generator, \eqref{left Chev}.

Highest weight $U_q'\widehat{\gl}_n$ modules are principally graded, see \eqref{pdegree}.
\medskip

An $\E'_n$ module $V$ is called a highest weight module with highest weight vector $v$
if $v$ is a cyclic vector in $V$ satisfying
 \begin{align*}
\theta^{-1}(E_i(z))v=0, \qquad
\theta^{-1}(K^{\pm}_i(z))v=P_i^\pm(z)v\qquad (0\le i \le n-1)\,.  
\end{align*}
Here $P^\pm_{i}\in\C[[z^{\mp 1}]]$ and $\theta$ is the Miki automorphism, see Section \ref{sec:fusion}.

We set the degree of the highest weight vectors to zero. Then all
highest weight weight $\E'_n$ modules are graded, see \eqref{tor degree}: 
$V=\oplus_{k\le 0}V_k$.

Let $V$ be an irreducible highest weight $\E'_n$ module such that $\dim V_k<\infty$ for all $k$. Then in the
terminology of \cite{FJMM1} the module $V$ is quasi-finite. Moreover, by Theorem 2.3 in \cite{FJMM1}, the series $P_i^\pm(z)$ are expansions of a rational function $P_i(z)$. 
Moreover, the rational  function $P_i(z)$ is  regular at $z^{\pm1}=\infty$ and satisfies $P_i(0)P_i(\infty)=1$.

Denote $\mathbf{P}= (P_0(z),\ldots,P_{n-1}(z))$.
We call the $n$-tuple of rational functions $\bs P$ the highest weight of $V$.

Highest weight modules of $\E'_n$ were studied in detail in \cite{FJMM1}.

\medskip
Since the evaluation map is graded, the evaluation highest weight $U'_q\widehat{\gl}_n$ module with 
highest weight vector $v$ is 
a highest weight $\E'_n$ module with highest vector $v$. Indeed, using Lemma 2.4 in \cite{FJMM}, we find the principal degrees
$${\rm pdeg}\, \ev^{(3)}_u
(\theta^{-1}(E_{i,k}))=1, 
\qquad {\rm pdeg}\, \ev^{(3)}_u
(\theta^{-1}(H_{i,r}))=0, $$
 $0\leq i\leq n-1$, $k\in\Z$, $r\in \Z\setminus\{0\}$.

The following proposition describes the corresponding highest weight.

\begin{thm}\label{hw thm} 
Let $V$ be a highest weight $U'_q\widehat{\gl}_n$ module with highest weight 
$(\kappa_0,\dots,\kappa_{n-1})$. 
Let $V(u)$ be the evaluation  $\E^{(3)}_n$ module. Then 
$q_3^{n}=\prod_{i=0}^{n-1} \kappa_i^2$ 
and $V(u)$ is a highest weight module with highest weight
\begin{align*}
\bs P=\left(\kappa_0\frac{1-\kappa_0^{-2}u'/z}{1-u'/z},\ 
\kappa_{1}
\frac{1-q_3(\kappa_0\kappa_1)^{-2}u'/z}{1-q_3\kappa_0^{-2} u'/z},
\ \dots,\ 
\kappa_{n-1}\frac{1-q_3^{n-1}(\prod_{i=0}^{n-1}\kappa_i^{-2})u'/z}
{1-q_3^{n-1}(\prod_{i=0}^{n-2}\kappa_i^{-2})u'/z}
\right)\,,
\end{align*}
for an appropriate choice of $u'\in\C^\times$.
\end{thm}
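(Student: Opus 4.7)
My plan is to prove the theorem in four stages.

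First, the central charge relation $q_3^n = \prod_{i=0}^{n-1}\kappa_i^2$ is immediate from $Cv = (\prod\kappa_i)v$ combined with the defining relation $C^2 = q_3^n$ of the quotient $\E^{(3)}_n$ on which $\ev^{(3)}_u$ is defined. The assertion that $v$ is an $\E'_n$-highest weight vector was already noted in the paragraph preceding the theorem, using that $\ev^{(3)}_u$ respects the homogeneous gradings \eqref{degree} and \eqref{tor degree} while $v$ occupies the top degree subspace of $V$.

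The substantive step is the determination of $P_i(z)$. For this I would use the Miki-automorphism formulas \eqref{Hi1}, \eqref{Hi-1} (for $1 \le i \le n-1$) and \eqref{H01}, \eqref{H0-1} (for $i = 0$), which express $\theta^{-1}(H_{i,\pm 1})$ as explicit nested commutators of the generators $E_{j,k}$ and $F_{j,k}$ of $\E'_n$. Under $\ev^{(3)}_u$, the letters with $j \ne 0$ become vertical Drinfeld currents of $U'_q\widehat{\gl}_n$ and act on $v$ through the standard highest-weight data, while the letters $E_{0,k}, F_{0,k}$ become specific coefficients of the vertex operator expressions in \eqref{E0F0}. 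Most terms vanish on $v$ by virtue of $x^+_{j,0}v = 0$ and $Z_r v = 0$ for $r \geq 1$; what remains is controlled by the normal-ordering relations \eqref{A+EF}--\eqref{B+B-} and by the fused-current commutation relations of Section \ref{sec:fusion}. The upshot is an explicit scalar for each $\theta^{-1}(H_{i,\pm 1}) v$, giving the first nontrivial Laurent coefficients of $P_i^\pm(z)$ at $z^{\pm 1} = \infty$.

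To conclude, I would invoke the classification from \cite{FJMM1}: each $P_i(z)$ is a rational function regular at $z^{\pm 1} = \infty$ with $P_i(0)P_i(\infty) = 1$. The proposed form $\kappa_i(1 - a_i/z)/(1 - b_i/z)$ is the generic rational function with a single simple pole and a single simple zero, consistent with the pole structure visible in \eqref{E0F0}. Matching the Laurent coefficients determines $a_i, b_i$ uniquely in terms of $u, \kappa_0, \ldots, \kappa_{n-1}$, and the free parameter $u'$ emerges as the unique pole of $P_0(z)$, a specific scalar multiple of the evaluation parameter $u$ depending on the normalization conventions in \eqref{E0F0}.

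The principal obstacle is the explicit computation of $\theta^{-1}(H_{i,\pm 1}) v$ in stage two. The nested commutators involve the evaluation images of $F_{0,0}$ (and of $F_{0,-1}$, $E_{0,1}$ when $i = 0$), which are vertex operators built from $\ssE, \ssF$ and the exponentials $e^{A_\pm(z)}, e^{B_\pm(z)}$. Correctly tracking the normal-ordering poles and their residues through many nested commutators, while exploiting the highest-weight vanishings in $V$ at each stage, constitutes the main book-keeping burden of the proof.
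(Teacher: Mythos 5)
Your first paragraph (the relation $q_3^n=\prod\kappa_i^2$ and the fact that $v$ is an $\E'_n$-highest weight vector) is fine. The problem is with the substantive steps. The paper's own ``proof'' is a one-line reference to Theorem~5.7 of \cite{MY}, so there is no detailed argument to compare against; but the route you sketch contains a genuine gap which the cited argument does not have.

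The issue is in your concluding step. The classification statement from \cite{FJMM1} that you invoke is recalled in Section~\ref{hw:sec} of the paper with explicit hypotheses: $V$ must be an \emph{irreducible} highest weight $\E'_n$ module with $\dim V_k<\infty$ for all $k$. Neither hypothesis holds for a general evaluation module. Indeed, for $V$ a $U'_q\widehat{\gl}_n$ Verma module, the top graded piece is an infinite-dimensional $U_q\gl_n$ Verma module, so $\dim V_k<\infty$ fails outright, and irreducibility also need not hold. Moreover, even granted rationality of $P_i(z)$, regularity at $z^{\pm1}=\infty$, and $P_i(0)P_i(\infty)=1$, nothing in that classification pins the degree of $P_i(z)$ to one. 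A rational function with $d$ simple zeros and $d$ simple poles satisfies the same constraints, and then two Laurent coefficients (from $\theta^{-1}(H_{i,\pm1})v$) are nowhere near enough to determine it. Your remark that the degree-one form is ``consistent with the pole structure visible in \eqref{E0F0}'' is an observation, not an argument. As written, the plan proves strictly less than the theorem.

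The way around this, and the likely content of the cited \cite{MY} argument, is to compute the eigenvalue of the full Cartan current $\theta^{-1}(K_i^\pm(z))$ on $v$ directly, rather than only its first modes. The paper's own Appendix illustrates exactly this technique for the Wakimoto case: Lemma~\ref{K=phi} identifies $\theta^{-1}(K_i^\pm(z))$ for $1\le i\le n-1$ with the horizontal Drinfeld Cartan currents $\phi_i^\pm(z)$, Lemma~\ref{top ev} shows the horizontal $U'_q\widehat{\sln}_n$ acts on the top level through the finite evaluation map $\overline{\ev}_u$ of \eqref{ev1}, and Lemma~\ref{lem:ev3GZ0} then produces the eigenvalue of $\phi_i^\pm(z)$ in closed rational form by a guess-and-verify argument against the Drinfeld relations, checked on the Chevalley generators $e_0,f_0$. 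The $i=0$ case is obtained separately from the Heisenberg factor of the vertex operator in \eqref{E0F0}. This route delivers the rational function itself rather than isolated Laurent coefficients, and so does not need any a priori degree bound. If you want to keep your commutator-computation strategy for $\theta^{-1}(H_{i,\pm r})$, you would have to carry it out for all $r$, or supply an independent argument bounding the degree of $P_i$; neither is done in your sketch.

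A smaller point: the assertion ``most terms vanish on $v$ by virtue of $x^+_{j,0}v=0$ and $Z_rv=0$'' is too optimistic. The highest weight condition in this paper is the Chevalley condition $e_jv=0$, $Z_rv=0$, which does not kill arbitrary Drinfeld modes $x^\pm_{j,k}$ or $h_{j,r}$; the actual vanishing comes from the grading (operators of positive homogeneous degree kill $v$), and tracking which terms in the nested commutators have positive degree requires care, particularly for the $E_{0,k},F_{0,k}$ letters whose evaluation images involve the infinite products $e^{A_\pm(z)},e^{B_\pm(z)}$.
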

\begin{proof}
The proposition is proved similarly to Theorem 5.7 in \cite{MY}.
\end{proof}
In the case of evaluation modules defined for $C=q_1^{n/2}$, 
the highest weight reads
\begin{align*}
\bs P=\left(\kappa_0\frac{1-u'/z}{1-\kappa_0^2u'/z},\ 
\kappa_1\frac{1-q_1^{-1}\kappa_0^2u'/z}{1-q_1^{-1}\kappa_0^2\kappa_1^2u'/z},
\ \dots,\ 
\kappa_{n-1}\frac{1-q_1^{-n+1}(\prod_{i=0}^{n-2}\kappa_i^2)u'/z}
{1-q_1^{-n+1}(\prod_{i=0}^{n-1}\kappa_i^2) u'/z}\right)\,.
\end{align*}

\medskip
 
It follows from Theorem \ref{hw thm} that the 
modules $\mc H^{(k)}(u_1,\dots,u_n)$ and 
$\mc G_{\mu, \nu}^{(k)}$ in \cite{FJMM1} are 
evaluation Verma and Weyl type modules respectively.

\subsection{Wakimoto evaluation modules}\label{GZ:sec}
A Gelfand-Zeitlin pattern for $\gl_n$ is an array of complex numbers 
\begin{align*}
\bla=
\begin{matrix}
\lambda_{0,n-1}&     &\lambda_{1,n-1} &    &  \cdots  & \lambda_{n-2,n-1}&     &           &\lambda_{n-1,n-1} \\
              &\lambda_{0,n-2}&      &\lambda_{1,n-2}& \cdots            &     &\lambda_{n-2,n-2}&     \\
              &              &\ddots &               &\ddots             &                 & &    \\
             &               &       &\lambda_{0,1}  &                   &\lambda_{1,1}    &  &   \\
             &               &       &               &\lambda_{0,0}      &                 & &    \\
\end{matrix}
\end{align*}

Fix generic complex numbers $\lambda^0_{i,j}\in\C$ and consider the linear space ${\rm GZ}_{\bla^0}$ with basis
$\{\ket{\bla}\}$, where $\bla$ runs over Gelfand-Zeitlin patterns for $\gl_n$
such that $\lambda_{i,j}\in \lambda^0_{i,j}+\Z$ for $0\le i\le j\le n-2$ 
and $\lambda_{i,n-1}=\lambda^0_{i,n-1}$ for $0\le i\le n-1$:
\begin{align}\label{adm part}
{\rm GZ}_{\bla^0}={\rm Span}_{\C}\{\ket{\bla}\mid 
\lambda_{i,j}\in \lambda^0_{i,j}+\Z\ (0\le i\le j\le n-2), ~ \lambda_{i,n-1}=\lambda^0_{i,n-1}\ (0\le i\le n-1)\}\,.
\end{align}

\begin{prop} \cite{J1},\cite{NT}
\label{GZ action}
The following formulas give a representation of $U_q\mathfrak{gl}_n$ on ${\rm GZ}_{\bla^0}$.
\begin{align*}
&e_i\ket{\bla}=\sum_{k=0}^{i-1}c_{k,i-1}^{+}(\bla)\ket{\bla+\one_{k,i-1}}\,,
\\
&f_i\ket{\bla}=\sum_{k=0}^{i-1}c_{k,i-1}^{-}(\bla)\ket{\bla-\one_{k,i-1}}\,,
\\
&q^{\varepsilon_r}\ket{\bla}=q^{h_{r}(\bla)-h_{r-1}(\bla)}\ket{\bla}\,.
\end{align*}
Here $1\le i\le n-1$,  $0\le r\le n-1$, 
$\one_{i,j}$ stands for the vector 
$(\delta_{r,i}\delta_{s,j})_{0\le r\le s\le n-1}$, 
and 
\begin{align*}
&c^+_{k,i-1}(\bla)=\frac{\prod_{l=0}^{i-2}[\lambda_{l,i-2}-\lambda_{k,i-1}-l+k-1]}
{\prod_{l=k+1}^{i-1}[\lambda_{l,i-1}-\lambda_{k,i-1}-l+k-1][\lambda_{l,i-1}-\lambda_{k,i-1}-l+k]}\,,
\\
&c^-_{k,i-1}(\bla)=-\frac{\prod_{l=0}^{i}[\lambda_{l,i}-\lambda_{k,i-1}-l+k+1]}
{\prod_{l=0}^{k-1}[\lambda_{l,i-1}-\lambda_{k,i-1}-l+k][\lambda_{l,i-1}-\lambda_{k,i-1}-l+k+1]}\,,
\\
&h_r(\bla)=\sum_{k=0}^{r}\lambda_{k,r}\,.
\end{align*}
\qed
\end{prop}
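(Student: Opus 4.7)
The plan is to verify the defining relations of $U_q\gl_n$ directly on the basis $\{\ket{\bla}\}$, exploiting locality: the operator $e_i$ (respectively $f_i$) shifts a single entry in the $(i-1)$-th row of the pattern, and its coefficients $c^{\pm}_{k,i-1}(\bla)$ depend only on the three consecutive rows $i-2,i-1,i$. The Cartan-type relations are immediate from the definition. The conjugation identity $q^{\varepsilon_r}e_iq^{-\varepsilon_r}=q^{(\varepsilon_r,\al_i)}e_i$ follows because the shift $\bla\mapsto\bla+\one_{k,i-1}$ changes $h_{i-1}(\bla)$ by $1$ and leaves $h_r(\bla)$ unchanged for $r\ne i-1$, matching $(\varepsilon_r,\al_i)=\delta_{r,i-1}-\delta_{r,i}$. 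The $f_i$ case is analogous. For $|i-j|\ge 2$ the three-row bands $\{i-2,i-1,i\}$ and $\{j-2,j-1,j\}$ are disjoint, so $e_i,e_j$ and $f_i,f_j$ commute by inspection.

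The relation $[e_i,f_i]=(K_i-K_i^{-1})/(q-q^{-1})$ is checked term by term. Off-diagonal contributions (shifting two distinct entries $\la_{k,i-1}$ and $\la_{l,i-1}$ with $k\ne l$) cancel between $e_if_i\ket{\bla}$ and $f_ie_i\ket{\bla}$ because $c^+_{l,i-1}(\bla-\one_{k,i-1})c^-_{k,i-1}(\bla)=c^-_{k,i-1}(\bla+\one_{l,i-1})c^+_{l,i-1}(\bla)$; this follows directly from the way independent shifts in the middle row affect the $q$-number factors. The diagonal contribution reduces to
\begin{align*}
\sum_{k=0}^{i-1}\Bigl(c^+_{k,i-1}(\bla-\one_{k,i-1})c^-_{k,i-1}(\bla)-c^-_{k,i-1}(\bla+\one_{k,i-1})c^+_{k,i-1}(\bla)\Bigr),
\end{align*}
which is a rational expression in the variables $\la_{l,i-1}$. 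Using partial fractions in these variables, the sum telescopes to the scalar $[2h_{i-1}(\bla)-h_{i-2}(\bla)-h_i(\bla)]/(q-q^{-1})\cdot(q-q^{-1})$, matching the eigenvalue of $(K_i-K_i^{-1})/(q-q^{-1})$ on $\ket{\bla}$. The relations $[e_i,f_j]=0$ for $|i-j|=1$ require a small additional check: although the coefficient of $e_i$ depends on row $i$, which overlaps the support of $f_{i+1}$, one verifies that the two one-step shifts affect the relevant $[\cdot]$-factors in a compatible way so that $e_if_{i+1}$ and $f_{i+1}e_i$ agree on each resulting basis vector.

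The main obstacle is the quantum Serre relations $[e_i,[e_i,e_{i\pm 1}]_{q^{-1}}]_q=0$ and their $f$-counterparts. Expanding $e_i^2e_{i+1}-(q+q^{-1})e_ie_{i+1}e_i+e_{i+1}e_i^2$ acting on $\ket{\bla}$ produces, for each triple $(k_1,k_2,k_3)$ with $k_1,k_2\in\{0,\dots,i-1\}$, $k_3\in\{0,\dots,i\}$, a linear combination of six ordered products of $c^+$-coefficients for the target pattern $\bla+\one_{k_1,i-1}+\one_{k_2,i-1}+\one_{k_3,i}$. Since all these coefficients involve only rows $i-1,i,i+1$ (plus the frozen rows $i-2$ and $i+1$'s neighbors, which enter as parameters), the identity reduces to the Serre relation for $U_q\gl_3$ applied to the three-row subpattern. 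I would then verify that reduced identity by partial fractions in the two variables $\la_{k_1,i-1},\la_{k_2,i-1}$: the symmetrized sum in these two variables is a rational function that manifestly has no poles (poles from $(q+q^{-1})$-weighted middle term cancel those of the outer terms) and vanishes at infinity in each variable, hence is identically zero. The $f$-Serre relations and the case $j=i-1$ are handled by the same argument after relabeling rows.
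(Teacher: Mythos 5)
The paper does not prove this proposition: it is stated with an immediate $\qed$ and no argument, since the quantum Gelfand--Zeitlin formulas are well known in the literature (going back to Jimbo and to Ueno--Takebayashi--Shibukawa). There is therefore no proof in the paper to compare against. Your sketch of a direct verification of the defining relations is the standard way one would prove such a statement from scratch, and the outline is sound: the locality of the shift operators, the off-diagonal cancellation in $[e_i,f_i]$, and the restriction of the Serre relation to a finite window of rows are all correct organizing principles, and I have spot-checked the ratio identities underlying the off-diagonal cancellation (set $x=\lambda_{k,i-1}-\lambda_{l,i-1}-k+l$ and use $[-x]=-[x]$) and they do hold.

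A few inaccuracies are worth noting, none fatal. First, the coefficient $c^+_{k,i-1}$ depends only on rows $i-2$ and $i-1$ (not on row $i$), while $c^-_{k,i-1}$ depends on rows $i-1$ and $i$; consequently the commutator $[e_i,f_{i+1}]$ vanishes trivially by disjoint support, and the case that genuinely requires the one-line ratio check is $[e_i,f_{i-1}]=0$ (equivalently $[e_{i+1},f_i]=0$), where the supports overlap on rows $i-1,i$. Your proposal identifies the wrong pair. Second, the diagonal contribution to $[e_i,f_i]$ does not literally telescope; the correct mechanism is a $q$-partial-fraction (residue) identity showing that the sum of rational functions of $q^{2\lambda_{k,i-1}}$ is a constant and then evaluating it, and the classical Gelfand--Zeitlin proof proceeds exactly this way, so this is more a matter of word choice than substance. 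Third, the reduction of the Serre relation is not quite to $U_q\mathfrak{gl}_3$: the coefficients of $e_i$ and $e_{i+1}$ together involve the three rows $i-2,i-1,i$, but the shifts occur in rows $i-1$ and $i$, so the top of the window (row $i$) is not frozen; the local model is effectively a $\mathfrak{gl}_4$-type pattern with a frozen fourth row, not $\mathfrak{gl}_3$. The partial-fraction argument you propose for the symmetrized expression is still the right tool, but one must track the dependence on the shifted row $i$ correctly. As a proof sketch this is acceptable, but a complete write-up would require carrying out the Serre computation explicitly.
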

\bigskip
We call the $U_q\gl_n$ module ${\rm GZ}_{\bla^0}$ the  Gelfand-Zeitlin module.

Now we choose an arbitrary $\kappa \in\C^\times$ and 
define the $U_{q,\kappa}'\widehat{\mathfrak{gl}}_n$ module $\widehat{\rm GZ}_{\bla^0}$ of level $\kappa$ as follows.

Consider the $U_q\gl_n$ Gelfand-Zeitlin module ${\rm GZ}_{\bla^0}$ described in Proposition \ref{GZ action}. 
We trivially extend it to the action of the subalgebra ${\mathfrak b}_{q,\kappa}$ by setting
$$
x^{\pm}_{i,r}\ket{\bla}   
=h_{j,r}\ket{\bla}=0 \qquad (1\le i\le n-1, \quad 0\le j\le n-1 , \quad r\ge 1, \quad \ket{\bla}\in {\rm GZ}_{\bla^0}).$$ 
Then we induce
\begin{align*}
\widehat{\rm GZ}_{\bla^0}=
{\rm Ind}_{{\mathfrak b}_{q,\kappa}}^{U'_{q,\kappa}\widehat{\mathfrak{gl}}_n}\ 
{\rm GZ}_{\bla^0}.
\end{align*}

We call $\widehat{\rm GZ}_{\bla^0}$ the {Wakimoto module}.
Clearly, any Wakimoto module is an admissible $U_{q,\kappa}'\widehat{\mathfrak{gl}}_n$ module. 

Finally, we choose $u\in\C^\times$, set $q_3^{n/2}=\kappa$ and, using the 
evaluation homomorphism $\ev_u^{(3)}$, we view $\widehat{\rm GZ}_{\bla^0}$ as  
an $\E'_n(q_1,q_2,q_3)$ module. We denote this evaluation module by $\widehat{\rm GZ}_{\bla^0}(u)$. 

Recall the subalgebras $\E_{1,m}=\E_1(q_{1,m},q_2,q_{3,m})\subset \E'_n$, 
$0\le m\le n-1$, and the algebra $\mathcal A$, see \eqref{A}.

For each $i\in \{1,2,3\}$, the Fock module $\F_i(u)$ for $\E_1'$ is defined to be
the highest weight $\E'_1(q_1,q_2,q_3)$ module 
with highest weight $$\bs P=q_i^{1/2}\ \frac{1-q_i^{-1}u/z}{1-u/z}.$$ 

The Fock modules are well known and understood, see for example \cite{FFJMM}, \cite{FJMM1}. 
For $\E_1'$, 
the action of $E_0(z), F_0(z), K_0^\pm(z)$ can be described explicitly via one free boson and vertex operators. 
In particular the character $\sum_{r\ge 0}\dim (\F_i(u))_{-r} x^{r}$
of the Fock module $\F_i(u)$ is given by $1/(x)_\infty=\prod_{r>0}(1-x^r)^{-1}$.

Our main result of this section is the following statement.
For this statement we make a technical assumption that $q$ is generic, meaning we exclude a set of values of $q$ which is at most countable.
\begin{thm}\label{decompose}
We have the decomposition of 
$\mathcal{A}$ modules
\begin{align*}
\widehat{\rm GZ}_{\bla^0}(u)=\oplus_{\bla}\ W(\bla)\,,
\quad 
W(\bla)=\mathcal{A}\ket{\bla}=
W_0(\bla)\boxtimes W_1(\bla)\boxtimes\cdots\boxtimes W_{n-1}(\bla)\,,
\end{align*}
where the sum is over all GZ patterns  as in \eqref{adm part}, 
and the $\E_{1,m}$ module $W_m(\bla)$ is given by 
\begin{align*}
&W_m(\bla)=
\overbrace{\F_3(u_{0,m}(\bla))\otimes\cdots\otimes\F_3(u_{m,m}(\bla))}^{m+1}
\otimes 
\overbrace{\F_1(v_{0,m-1}(\bla))\otimes\cdots \otimes \F_1(v_{m-1,m-1}(\bla))}^{m}
\,,
\end{align*}
where 
\begin{align*}
&u_{l,m}(\bla)=q_2^{\lambda_{l,m}-l+m}q_3^n\tilde{u}\,,    
\quad v_{l,m-1}(\bla)=q_2^{\lambda_{l,m-1}-l-1}\tilde{u}\,, \\
&
\tilde{u}=(-1)^{n}
q^{-\sum_{r=0}^{n-1}\lambda^0_{r,n-1}+n-1}u\,.
\end{align*}
\end{thm}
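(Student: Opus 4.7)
The plan is to establish the decomposition in three stages: first, show that each Gelfand--Zeitlin basis vector $\ket{\bla}$ is a joint highest weight vector for the $n$ commuting subalgebras $\E'_{1,m}$; second, identify the $\E'_{1,m}$-module $W_m(\bla)=\E'_{1,m}\ket{\bla}$ with the prescribed tensor product of Fock modules by matching highest weights; third, confirm that $\oplus_{\bla}W(\bla)$ is the whole Wakimoto module via a character comparison.

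For the first stage I would compute, under $\ev_u^{(3)}$, the action of the fused currents $\ssE_m(z), \ssF_m(z), \ssK_m^\pm(z)$ generating $\E'_{1,m}$. The factors $E_i, F_i, K_i^\pm$ with $i\neq 0$ act through the vertical embedding and reduce on $\ket{\bla}$ to Gelfand--Zeitlin matrix elements, while the $i=0$ current $E_0(z)$ (respectively $F_0(z)$) must be replaced by the vertex operator $e^{A_-(z)}\ssF(z)e^{A_+(z)}\mathcal{K}$ (respectively its $B$-version) from Theorem \ref{thm:eval}. Substituting these into the length-$n$ fused products and normal ordering using \eqref{A+EF}--\eqref{B-EF} and \eqref{A+A-}--\eqref{B+B-}, then simplifying via the commutation relations \eqref{E1E}--\eqref{FE1n} of Section \ref{sec:fusion}, I would verify that all positive modes of $\ssE_m(z)$ annihilate $\ket{\bla}$: on the bosonic side this is automatic since $A_+, B_+$ are built from $H_{j,r}$ with $r\ge1$, which act trivially by construction of the Wakimoto induction from $\mathfrak{b}_{q,\kappa}$, and on the $U_q\gl_n$ side it reduces to cancellations among the GZ raising coefficients of Proposition \ref{GZ action}. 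Computing the eigenvalues of $\ssK_m^\pm(z)$ on $\ket{\bla}$ then yields an explicit rational function whose zeros and poles appear precisely at the points $u_{l,m}(\bla)$ and $v_{l,m-1}(\bla)$ stated in the theorem, where the $q_2^{\lambda_{l,\cdot}}$ shifts trace back to the $q^h$-eigenvalues $h_r(\bla)$.

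For the second stage I would invoke uniqueness of generic irreducible highest weight $\E'_1$-modules: once the highest weight of $W_m(\bla)$ matches the product of the $(m+1)$ Fock factors $q_3^{1/2}(1-q_3^{-1}u_{l,m}/z)/(1-u_{l,m}/z)$ and the $m$ factors $q_1^{1/2}(1-q_1^{-1}v_{l,m-1}/z)/(1-v_{l,m-1}/z)$, the tensor product structure is forced, and mutual commutativity of the $\E'_{1,m}$ delivers the external tensor product $W(\bla)=W_0(\bla)\boxtimes\cdots\boxtimes W_{n-1}(\bla)$. The final direct-sum claim then follows by a character match: distinct GZ patterns $\bla$ are separated inside $\widehat{\rm GZ}_{\bla^0}(u)$ by their $q^{\varepsilon_r}$-eigenvalues, and summing $1/(x)_\infty^{n^2}$ weighted over the lattice of admissible $\bla$ reproduces the character of the induced Wakimoto module. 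The main obstacle is the first stage: inserting the vertex-operator form of $E_0(z), F_0(z)$ into $\ssE_m(z), \ssF_m(z)$ sandwiches Heisenberg exponentials between the other fused currents, and the rational prefactors produced by normal ordering must cancel exactly against the poles built into the definitions of $\ssE_m(z), \ssF_m(z)$; tracking these cancellations simultaneously with the action of the $U_q\gl_n$ generators on ${\rm GZ}_{\bla^0}$, and reading off the precise location of the surviving poles so as to match the parameters $u_{l,m}(\bla), v_{l,m-1}(\bla)$, is the technical computation that drives the appendix.
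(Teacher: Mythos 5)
Your plan has the right skeleton (identify $\ket{\bla}$ as a joint highest-weight vector, match the weights against Fock modules, close with a character count) and stages two and three are essentially what the appendix does, but the heart of stage one is wrong as stated, and this is not a small omission — it is the part that occupies almost the whole proof.

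You propose to compute, under $\ev_u^{(3)}$, the action of the \emph{vertical} fused currents $\ssE_m(z),\ssF_m(z),\ssK_m^\pm(z)$ on $\ket{\bla}$, and in particular to read off a rational eigenvalue of $\ssK_m^\pm(z)$. This does not work. The modes $H_{j,r}$ with $r\ge1$ annihilate $\ket{\bla}$, so $\ssK_m^+(z)\ket{\bla}$ is just a scalar $\prod K_i^{\pm1}\ket{\bla}$, carrying no $z$-dependence; and $\ssK_m^-(z)$ involves $H_{j,-r}$ with $r\ge1$, which move $\ket{\bla}$ off the top level ${\rm GZ}_{\bla^0}$ entirely, so it is not diagonal at all. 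There is no ``explicit rational function whose zeros and poles appear at $u_{l,m},v_{l,m-1}$'' coming from the vertical $\ssK$'s. Similarly, ``positive modes of $\ssE_m(z)$ annihilate $\ket{\bla}$'' is a vacuous degree statement and does not establish the highest-weight property: for an $\E'_1$-module the highest weight is defined via the Miki automorphism, $\theta^{-1}(E(z))v=0$, $\theta^{-1}(K^\pm(z))v=P^\pm(z)v$, and $\theta^{-1}$ exchanges the homogeneous grading with the root grading, so the nontrivial conditions live entirely in degree $0$.

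What the paper actually does is apply $\theta^{-1}$ (and $\theta_m^{-1}$) to get degree-zero operators $K_i^{\pm,=}(z)$ and $\ssK_m^{\pm,=}(z)$ that preserve ${\rm GZ}_{\bla^0}$, compute the $K_i^{\pm,=}$ eigenvalues by recognizing them as horizontal Drinfeld currents acting via the classical $\overline{\ev}_u$ on the Gelfand--Zeitlin module, and then reduce $\ssK_m^{\pm,=}$ to a product of the $K_i^{\pm,=}$ by means of a Harish-Chandra-type projection $\pr:\widetilde\E'_n\to\E_n^0$. Establishing that projection formula (Propositions \ref{Hi1-n}, \ref{Hi1-2}, \ref{simple proj}, \ref{A=B}) is the technical core of the appendix, uses the explicit Miki images of $H_{i,\pm1}$, and requires the genericity of $q$ (both for the triangular decomposition and for the commutant argument in Proposition \ref{A=B}) — none of which appears in your proposal. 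Your normal-ordering computation with the $e^{A_\pm},e^{B_\pm}$ vertex operators inside $\ssE_m,\ssF_m$ is a genuinely different (and much messier) route, and as written it does not supply the degree-zero diagonal data needed to pin down the Fock parameters; you would still need the Miki/projection machinery or an equivalent substitute.
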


Theorem \ref{decompose} is proved in Appendix \ref{sec: Appendix}.

The $\E_1$ modules $W_m(\bla)$ also appeared in \cite{BFM}.

\appendix
\section{}\label{sec: Appendix}
We sketch the proof of Theorem \ref{decompose}.
\subsection{The plan of the proof}
Our logic is the following. 

Denote $K_i^{\pm,=}(z)=\theta^{-1}(K_i(z))$. 
Denote $\ssK_m^{\pm, =}(z)=\theta^{-1}_m(\ssK_m(z))$, where $\theta_m$ is the Miki automorphism of $\E_{1,m}$.

Since the degrees of $K_i^{\pm,=}(z),\ssK_m^{\pm,=}(z)$ are zero, these operators preserve the space 
${\rm GZ}_{\bla^0}\subset \widehat{\rm GZ}_{\bla^0}$. We show that these operators are diagonal in the basis $\{\ket{\bla}\}$
and give their eigenvalues. 

First, we compute the eigenvalues of $K_i^{\pm,=}(z)$ using the fact that these operators belong to the horizontal algebra,
which acts on ${\rm GZ}_{\bla^0}$ through the standard evaluation map $\overline \ev_u$.
Then we calculate the eigenvalues of $\ssK_m^{\pm, =}(z)$ by finding the projection of these operators to the algebra 
generated by $K_i^{\pm,=}(z)$ along annihilating operators. This is a long calculation. 
We first compute the projection of the first components of $\ssK_m^{\pm, =}(z)$ explicitly and then argue that this is sufficient.

We observe that the eigenvalue of $\ssK_m^{\pm,=}(z)$ on eigenvector $\ket{\bla}$ coincides with the highest weight of module $W_m(\bla)$. 
It follows that the character of $\mathcal A \ket{\bla}$ is at least 
$((x)_\infty)^{-n^2}$. We show that the joint spectrum of $\ssK_m^{\pm,=}(z)$, $0\le m\le n-1$, is simple in 
$\oplus_{\bla} W(\bla)$.

Comparing to the character of $\widehat{\rm GZ}_{\bla^0}$, we obtain the theorem.

\subsection{Projection}\label{sec:projection}
Let $\E_n^\pm \subset \E_n'$ 
be the subalgebras generated by $E_{i,r},F_{i,r},H_{i,r}$ with $0\le i\le n-1$ and $\pm r>0$,
and let $\E^0_n=U_q^{h}\widehat{\gl_n}$. Let $\widetilde \E_n^+ \subset \widetilde \E_n'$ be the completion of $\E_n^+$.
We use the triangluar decomposition 
\begin{align*}
\widetilde \E_n'=  \E^{-}_n\otimes \E_n^0 \otimes \widetilde \E_n^+\,.
\end{align*}
While we expect this to hold for all $q$, we have been unable to find it in the literature.  
For generic $q$ it can be shown by taking the limit $q\to 1$ and using the result of \cite{N}.

Consider the projection to the middle factor in the triangular decomposition
\begin{align*}
 \pr:\widetilde \E'_n \longrightarrow \E^0_n\,.
\end{align*}
We shall write $x\equiv y$ if $\pr(x)=\pr(y)$ for $x,y\in\widetilde \E'_n$. 
Similarly to the usual Harish-Chandra map, this projection is a homomorphism when 
restricted to the subalgebra $(\widetilde \E'_n)_0\subset \widetilde \E_n'$ consisting of elements of homogeneous degree $0$.

The algebras $\E_{1,m}$ are obtained by taking $(n-1)$ simple fusions. 
It is sufficient to consider a single simple fusion. For that purpose, 
following \cite{FJMM1}, we consider the subalgebra
\begin{align*}
\E^{0|1}_{n-1}=
\E'_{n-1}(\tilde q_1,q_2,\tilde q_3)\ \subset \ \widetilde \E'_n(q_1,q_2,q_3)\,,
\quad \tilde q_1=q_1q_1^{\frac{1}{n-1}}\,,  \tilde q_3=q_3q_1^{-\frac{1}{n-1}}\,.
\end{align*} 
It is generated by the following currents
\begin{align*}
&\tilde E_i(z)=E_{i+1}(q_1^{\frac{i}{n-1}}z)\,,\quad
\tilde F_i(z)=F_{i+1}(q_1^{\frac{i}{n-1}}z)\,,\quad
\tilde K^\pm_i(z)=K^\pm_{i+1}(q_1^{\frac{i}{n-1}}z)
\quad (1\le i\le n-2)\,,
\\
&\tilde{E}_0(z)=E_{0|1}(z)=\Bigl(1-\frac{z}{z'}\Bigr)
E_{0}(q_1z')E_1(z)\Bigl|_{z'=z}\,,
\\
&\tilde{F}_0(z)=F_{0|1}(z)
=\Bigl(1-\frac{z'}{z}\Bigr)F_1(z)F_{0}(q_1z')\Bigl|_{z'=z}\,,
\\
&\tilde K^\pm_0(z)=K^\pm_0(q_1z)K_1^\pm(z)\,,
\end{align*}
and for $n=2$ 
\begin{align*}
&-\frac{qq_1^{-1}q_3}{1-q_1^{-1}q_3}
\tilde{E}_0(z)=E^{(1)}_{0|1}(z)
=\Bigl(1-\frac{z}{z'}\Bigr)
E_{0}(q_1z')E_1(z)\Bigl|_{z'=z}\,,
\\
&(1-q_1^2)\tilde{F}_0(z)=F^{(1)}_{0|1}(z)
=\Bigl(1-\frac{z'}{z}\Bigr)
F_1(z)F_{0}(q_1z')\Bigl|_{z'=z}\,,
\\
&\tilde K^\pm_0(z)=K^\pm_0(q_1z)K_1^\pm(z)\,.
\end{align*}
We also set $\tilde K_i^\pm(z)=\tilde K_i^{\pm1}\exp(\pm(q-q^{-1})\sum_{r>0} \tilde H_{i,\pm r}z^{\mp1})$.

We will use also another subalgebra $\E^{0|n-1}_{n-1}$
obtained from $\E^{0|1}_{n-1}$ by applying the isomorphism $\iota$ \eqref{iota}.
In what follows we study projections of elements of $\E^{0|1}_{n-1}$, $\E^{0|n-1}_{n-1}$.
We will write formulas only for the former, those for the latter are easily obtained by applying
$\iota$.

Let $\cN_{\ge r}$ denote the left ideal of 
$\widetilde\E_n'$ generated by elements of homogeneous degree
$\ge r$. 
\begin{lem}\label{EFtop}
Let $n\ge3$. Then
\begin{align}
&\tilde E_{0,0}   
\equiv -q[E_{1,0},E_{0,0}]_{q^{-1}} \bmod \cN_{\ge1}\,,
\quad
\tilde F_{0,0}   
\equiv -q^{-1}[F_{0,0},F_{1,0}]_{q}  \bmod \cN_{\ge1}\,,
\label{EFfus0}
\\
&\tilde E_{0,1}  
\equiv q_1^{-1}[E_{0,1},E_{1,0}]_{q}  \bmod \cN_{\ge2}\,,
\quad
\tilde F_{0,-1}  
\equiv -q_1q^{-1}[F_{0,-1},F_{1,0}]_{q}  \bmod \cN_{\ge1}\,.
\label{EFfus1}
\end{align} 
For $n=2$, setting $a=q(1-q_1^2)(1-q_1^{-1}q_3)$ and $b=-qq_1^2$ we have
\begin{align*}
&b^{-1}\tilde E_{0,0}   
\equiv E_{0,0}E_{1,0}-(1+q_1^{-1}q_3-q_1^{-2})E_{1,0}E_{0,0}
+q_1^{-1}E_{1,1}E_{0,-1}\bmod \cN_{\ge1}\,,
\\
&b^{-1}\tilde E_{0,1}  
\equiv q_1^{-1}E_{0,1}E_{1,0}
-q_1^{-2}(q_1+q_3-q_1^{-1})E_{1,0}E_{0,1}
-q_1^{-1}q_3E_{0,0}E_{1,0}
+q_1^{-2}E_{1,1}E_{0,0}\bmod \cN_{\ge2}\,,
\\
&ab \tilde F_{0,0}   
\equiv 
F_{1,0}F_{0,0}
-(1+q_1q_3^{-1}-q_1^{2})F_{0,0}F_{1,0}
+q_1 F_{0,1}F_{1,-1}\bmod \cN_{\ge1}\,,
\\
&ab \tilde F_{0,-1}   
\equiv 
q_1F_{1,0}F_{0,-1}
-q_1^2(q_1^{-1}+q_3^{-1}-q_1)F_{0,-1}F_{1,0}
-q_1q_3^{-1}F_{1,-1}F_{0,0}
+q_1^2 F_{0,0}F_{1,-1}\bmod \cN_{\ge1}\,.
\end{align*} 
\end{lem}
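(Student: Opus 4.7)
The proof proceeds by first converting the fused current $\tilde E_0(z)$ into a single ordered product, then extracting Fourier components and reducing modulo $\cN_{\ge r}$ via a recursion built from the quadratic (or cubic, when $n=2$) relation between $E_0$ and $E_1$. I treat the $n \ge 3$ case in detail. Starting from the $E$-$E$ current relation $d_{0,1}(z-q_1w)E_0(z)E_1(w)+(w-q_3z)E_1(w)E_0(z)=0$, the substitution $z\mapsto q_1z'$ together with the identity $q_1q_3=q^{-2}$ gives $(z'-z)E_0(q_1z')E_1(z) = -dq_1^{-1}(z-q^{-2}z')E_1(z)E_0(q_1z')$. Multiplying by $(1-z/z')=(z'-z)/z'$ cancels the apparent pole at $z'=z$, and setting $z'=z$ produces the clean identity
\[
\tilde E_0(z) = -(q-q^{-1})\,E_1(z)\,E_0(q_1z)\,.
\]
The analogous computation for $\tilde F_0(z)$ gives a scalar multiple of $F_1(z)F_0(q_1z)$ with the factors in the opposite order, and the four Fourier coefficients $\tilde E_{0,0},\tilde E_{0,1},\tilde F_{0,0},\tilde F_{0,-1}$ are extracted from these products.

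For $\tilde E_{0,0}=-(q-q^{-1})\sum_{a\in\Z}q_1^aE_{1,a}E_{0,-a}$, the terms with $a\le -1$ lie in $\cN_{\ge 1}$ automatically, since $E_{0,-a}$ has positive degree and sits on the right. For $a\ge 1$, I would use the Fourier form $[E_{0,-a+1},E_{1,a-1}]_{q^{-1}} = q_1[E_{0,-a},E_{1,a}]_q$ of the quadratic relation to rewrite $X_a := E_{1,a}E_{0,-a}$ as a combination of $E_{0,-a}E_{1,a}$, $E_{0,-a+1}E_{1,a-1}$, and $X_{a-1}=E_{1,a-1}E_{0,-a+1}$. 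For $a\ge 2$ the first two summands drop into $\cN_{\ge 1}$, leaving $X_a\equiv q^{-2}q_1^{-1}X_{a-1}\pmod{\cN_{\ge 1}}$; the base case $a=1$ keeps an extra surviving piece that evaluates to $X_1\equiv -(qq_1)^{-1}[E_{0,0},E_{1,0}]_{q^{-1}}$. Telescoping the recursion gives $\sum_{a\ge 1}q_1^aX_a\equiv q_1X_1/(1-q^{-2})$, and combining with $X_0 = E_{1,0}E_{0,0}$ the scalar identity $(q-q^{-1})/(1-q^{-2})=q$ produces the stated answer $-q[E_{1,0},E_{0,0}]_{q^{-1}}$.

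The formula for $\tilde E_{0,1}$ is obtained by the same method modulo $\cN_{\ge 2}$, with two differences: the boundary of the recursion shifts by one index so that an extra $E_{0,0}E_{1,1}$ summand survives at $a=2$, and the corresponding $a=1$ product $E_{1,1}E_{0,0}$ must itself be expanded via the quadratic relation. The $E_{0,0}E_{1,1}$ contributions cancel between these two sources, leaving exactly $q_1^{-1}[E_{0,1},E_{1,0}]_q$. The two $F$-identities are proved verbatim, with the $F$-$F$ quadratic relation in place of the $E$-$E$ one. For $n=2$ the argument is structurally identical but uses the cubic relation \eqref{quad-rel2}: the reduction now produces a three-term recursion spanning consecutive index triples, three boundary summands $E_{0,0}E_{1,0}, E_{1,0}E_{0,0}, E_{1,1}E_{0,-1}$ survive at the base, and they produce the trinomial expressions in the lemma with coefficient $1+q_1^{-1}q_3-q_1^{-2}$ arising directly from the structure of the cubic relation. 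The main technical nuisance throughout is tracking the many scalar factors through the geometric sums and verifying the cancellations of the intermediate terms; the validity of the infinite sums in the completed algebra is built into the definition of $\widetilde\E_n'$.
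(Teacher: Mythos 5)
There is a genuine gap in the reduction step. The proposed ``clean identity'' $\tilde E_0(z) = -(q-q^{-1})E_1(z)E_0(q_1z)$ does not hold as an identity of series with values in $\widetilde\E_n'$: the coefficient of $z^{-l}$ on the right is $-(q-q^{-1})\sum_{a\in\Z}q_1^{a-l}E_{1,a}E_{0,l-a}$, and as $a\to+\infty$ the terms $E_{1,a}E_{0,l-a}$ have their large-degree factor $E_{1,a}$ on the \emph{left}, so they do not lie in $\cN_{\ge r}$ for large $r$ and the sum does not converge in the formal topology defining the completion. Substituting the factor $(1-q^{-2}z'/z)$ by its value $1-q^{-2}$ before setting $z'=z$ is exactly the illegal step: keeping it is what makes the diagonal limit convergent. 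The same issue resurfaces downstream: after establishing $X_a\equiv q^{-2}q_1^{-1}X_{a-1}\bmod\cN_{\ge 1}$, the proposal sums $\sum_{a\ge1}q_1^a X_a$ and evaluates a geometric series $\sum_{a\ge1}q^{-2(a-1)}=1/(1-q^{-2})$. But $q_1^aX_a\equiv q_1q^{-2(a-1)}X_1$ does \emph{not} tend to zero modulo $\cN_{\ge 1}$, so this series has no meaning in $\widetilde\E_n'$ or in $\widetilde\E_n'/\cN_{\ge 1}$; one is summing divergent scalars. The fact that the final formulas come out right is an artifact of analytic continuation, not a proof.

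The paper's proof avoids both problems by never reordering the whole current. It starts directly from the definition $E_{0|1,l}=\sum_{m+k=l}\bigl(q_1^{-m}E_{0,m}E_{1,k}-q_1^{-m+1}E_{0,m-1}E_{1,k+1}\bigr)$ (the difference built in by the $(1-z/z')$ prefactor), applies the Fourier-form quadratic relation \eqref{quad-rel} \emph{termwise} only to the half of the sum with $k\le -1$, and obtains
\begin{align*}
E_{0|1,l}&=\sum_{j\ge0}q_1^{j-l}\bigl(E_{0,l-j}E_{1,j}-q_1E_{0,l-j-1}E_{1,j+1}\bigr)
+\sum_{j\ge1}q^{-1}q_1^{-j-l}\bigl(E_{1,-j}E_{0,j+l}-q_3^{-1}E_{1,-j+1}E_{0,j+l-1}\bigr).
\end{align*}
Here each summand of the $j$-th term lies in $\cN_{\ge j}$ (resp.\ $\cN_{\ge j+l-1}$), so the series genuinely converges, and reducing modulo $\cN_{\ge 1}$ or $\cN_{\ge 2}$ leaves only a finite number of surviving terms whose scalar factors are fixed, with no infinite geometric sum. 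If you want to salvage your approach, keep the extra factor $(1-q^{-2}z'/z)$ through the $z'\to z$ limit — it is precisely what converts the single products $E_{1,a}E_{0,-a}$ into the convergent differences above and makes the tail drop into $\cN_{\ge r}$ without needing any telescoping geometric series.
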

\begin{proof}
Let $n\ge3$. By the definition along with \eqref{quad-rel} we obtain
\begin{align*}
E_{0|1,l}
&=\sum_{j\ge0}
q_1^{j-l}\Bigl(
E_{0,-j+l}E_{1,j}-q_1 E_{0,-j-1+l}E_{1,j+1}
\Bigr) \\
&+\sum_{j\ge1} q^{-1}q_1^{-j-l}\Bigl(
E_{1,-j}E_{0,j+l}-q^{-1}_3 E_{1,-j+1}E_{0,j+l-1}\Bigr)\,, \\
F_{0|1,l}
&=\sum_{j\ge0}
q_1^{-j-l}\Bigl(
F_{1,-j}F_{0,l+j}-q^{-1}_1 F_{1,-j-1}F_{0,l+j+1}
\Bigr) \\
&+\sum_{j\ge1} qq_1^{j-l}\Bigl(
F_{0,l-j}F_{1,j}-q_3 F_{0,l-j+1}F_{1,j-1}
\Bigr)\,,
\end{align*} 
from which the assertion follows.

For $n=2$, we have
\begin{align*} 
E^{(1)}_{0|1,l}=\sum_{j\in\Z}q_1^{j-l}\bigl(E_{0,-j+l}E_{1,j} -q_1^{-1}E_{0,-j+l-1}E_{1,j+1}\bigr)\,.
\end{align*}
Rewriting it as 
\begin{align*}
(1-q_1^{-1}q_3)E^{(1)}_{0|1,l}=\sum_{j\in\Z}q_1^{j-l}
\bigl(E_{0,-j+l}E_{1,j}-(q_1+q_3)E_{0,-j+l-1}E_{1,j+1}+q_1q_3
E_{0,-j+l+2}E_{1,j+2}\bigr)
\end{align*}
and applying  \eqref{quad-rel2} we obtain 
\begin{align*}
(1-q_1^{-1}q_3)E^{(1)}_{0|1,l}
&=\sum_{j\ge0}
q_1^{j-l}\Bigl(
E_{0,l-j}E_{1,j}-(q_1+q_3)E_{0,l-j-1}E_{1,j+1}
+q_1q_3 E_{0,l-j-2}E_{1,j+2}\Bigr)
\\
&+\sum_{j\ge1} q_1^{-j-l}
\Bigl(q_1q_3 E_{1,-j}E_{0,j+l}
-(q_1+q_3)E_{1,-j+1}E_{0,l+j-1}
+ E_{1,-j+2}E_{0,j+l-2}\Bigr)
\,,
\end{align*}
Proceeding similarly with  $F^{(1)}_{0|1}(z)$ we get
\begin{align*} 
(1-q_1q_3^{-1})F^{(1)}_{0|1,l}
&=\sum_{j\ge0}
q_1^{-j-l}\Bigl(
F_{1,-j}F_{0,l+j}-
(q_1^{-1}+q_3^{-1})F_{1,-j-1}F_{0,l+j+1}
+q_1^{-1}q_3^{-1}F_{1,-j-2}F_{0,l+j+2}
\Bigr) \\
&+\sum_{j\ge1} q_1^{j-l}\Bigl(
q_1^{-1}q_3^{-1}F_{0,l-j}F_{1,j}
-(q_1^{-1}+q_3^{-1})F_{0,l-j+1}F_{1,j-1}
+
F_{0,l-j+2}F_{1,j-2}
\Bigr)\,,
\end{align*} 
which imply the relations stated in Lemma.
\end{proof}

\subsection{Action of $K_i^{\pm,=}(z)$ on ${\rm GZ}_{\bla^0}$}

Recall that the horizontal subalgebra 
 $U^h_q\widehat{\sln}_n$ 
of $\E_n'$ is given in Chevalley generators. The next lemma
describes the corresponding Drinfeld generators under the identification \eqref{Chev}.

Note that we use a different identification \eqref{left Chev} for Chevalley generators of the vertical algebra $U^v_q\widehat{\sln}_n$.

\begin{lem}\label{K=phi}
The Drinfeld generators of the horizontal subalgebra $U^h_q\widehat{\sln}_n$ are given by
\begin{align*}
&x_i^+(z)=\theta^{-1}(E_{i}(d^{-i}z))\,,\quad 
x_i^-(z)=\theta^{-1}(F_{i}(d^{-i}z))\,,\\
&
\phi^\pm_i(z)=\theta^{-1}(K^\pm_{i}(d^{-i}z))
\quad (1\le i\le n-1). 
\end{align*}
\end{lem}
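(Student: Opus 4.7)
The plan is to exploit the defining property of $\theta$, which interchanges the horizontal and vertical subalgebras of $\E_n$. Concretely, $\theta^{-1}$ maps the vertical subalgebra $U^v_q\widehat{\gl}_n$ into the horizontal one, so the images $\theta^{-1}(E_i(d^{-i}z))$, $\theta^{-1}(F_i(d^{-i}z))$, $\theta^{-1}(K^\pm_i(d^{-i}z))$ are well-defined elements of $U^h_q\widehat{\sln}_n$ for $1\le i\le n-1$. Because $\theta^{-1}$ is an algebra automorphism and the vertical currents $E_i(d^{-i}z)$ etc.\ satisfy the Drinfeld relations of $U'_q\widehat{\sln}_n$ (with central element $C$ which becomes $1$ upon passing to the horizontal copy), their $\theta^{-1}$-images satisfy the same Drinfeld relations inside $U^h_q\widehat{\sln}_n$. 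Thus they provide \emph{some} Drinfeld realization of $U'_q\widehat{\sln}_n$ inside the horizontal subalgebra.

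To pin this realization down as the standard one coming from the Chevalley presentation of $U^h_q\widehat{\sln}_n$, it suffices to match on a minimal generating set. By the Chevalley-to-Drinfeld isomorphism for $U'_q\widehat{\sln}_n$, the full collection of currents $x^\pm_i(z)$, $\phi^\pm_i(z)$ is uniquely determined once one knows the degree-zero pieces $x^\pm_{i,0}$, $K_i^{\pm 1}$ together with the imaginary-root generators $h_{i,\pm 1}$: higher Fourier modes are then produced inductively by commuting $x^\pm_{i,0}$ with $h_{i,\pm 1}$ via the $H$-$E$ and $H$-$F$ relations. I would therefore verify first that $\theta^{-1}(E_{i,0})=E_{i,0}$, $\theta^{-1}(F_{i,0})=F_{i,0}$, and $\theta^{-1}(K_i)=K_i$ for $1\le i\le n-1$, which encodes the standard normalization that $\theta$ restricts to the identity on the finite-type $U_q\sln_n$ subalgebra shared by the horizontal and vertical copies. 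Second, I would match $\theta^{-1}(H_{i,\pm 1})$, up to the $d^{\mp i}$ factor dictated by the spectral-parameter rescaling $z\mapsto d^{-i}z$, with the imaginary-root generators $h_{i,\pm 1}$ of the horizontal Drinfeld realization. This is essentially the content of formulas \eqref{Hi1} and \eqref{Hi-1}: their right-hand sides are precisely the standard Chevalley-to-Drinfeld commutator expressions for $h_{i,\pm 1}$ written in terms of $e_j=E_{j,0}$ and $f_j=F_{j,0}$ ($0\le j\le n-1$), and the scalar prefactor $(-d)^{\mp i}$ produces the desired shift.

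With these two matchings at hand, an induction on the Fourier mode, propagated by the $H$-$E$ and $H$-$F$ relations (which $\theta^{-1}$ preserves), upgrades the identifications from generators to the full series, yielding $\theta^{-1}(E_i(d^{-i}z))=x^+_i(z)$ and likewise for $x^-_i(z)$, $\phi^\pm_i(z)$. The main obstacle I anticipate is a purely bookkeeping one: keeping the $d^{-i}$ normalization consistent across the formulas for $\theta^{\pm 1}$ on the Cartan currents and through the recursive construction of higher Fourier modes. Conceptually no new ingredient is needed beyond the defining behavior of $\theta$ on the Chevalley/Drinfeld generators as recorded in \cite{Mi,FJMM}.
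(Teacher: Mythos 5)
Your proposal starts from the same observations as the paper — define the candidate Drinfeld currents as $\theta^{-1}(E_i(d^{-i}z))$, etc., note that they automatically satisfy the Drinfeld relations inside $U^h_q\widehat{\sln}_n$, and note that $\theta^{-1}$ fixes $E_{i,0},F_{i,0},K_i$ for $1\le i\le n-1$ — but then you pin the identification down differently, and this is where a gap appears. You propose to match the imaginary-root modes $h_{i,\pm 1}$ of the genuine Drinfeld realization of $U^h_q\widehat{\sln}_n$ against $\theta^{-1}(H_{i,\pm 1})$ (up to the $d^{\mp i}$ rescaling), invoking \eqref{Hi1}--\eqref{Hi-1} as ``precisely the standard Chevalley-to-Drinfeld commutator expressions for $h_{i,\pm 1}$.'' That last clause is an assertion, not a proof: \eqref{Hi1}--\eqref{Hi-1} are by definition formulas for $\theta^{-1}(H_{i,\pm 1})$ in Chevalley generators, and whether they coincide with Beck's commutator formula for $h_{i,\pm 1}$ is exactly the content you need to establish. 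The paper does not state Beck's formula for $h_{i,\pm 1}$, and establishing the coincidence from scratch is genuine work (it is how the Miki automorphism is constructed, but it is not a one-line observation).

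The paper's proof sidesteps this entirely. It instead uses \eqref{thetaE}--\eqref{thetaF}, which express $\theta(E_{0,0})$ and $\theta(F_{0,0})$ in Drinfeld currents; applying $\theta^{-1}$ and tracking the $d^{-i}$ rescaling yields $E_{0,0}$ and $F_{0,0}$ written in the proposed $x^\pm_{i,k}$ and $K_i$. Comparing this with the Chevalley-to-Drinfeld formulas for $e_0,f_0$ recorded in \eqref{Chev} shows the Chevalley generators built from the candidate Drinfeld currents coincide with $E_{j,0},F_{j,0}$, $0\le j\le n-1$, which by definition generate $U^h_q\widehat{\sln}_n$. This is both shorter and only uses formulas actually present in the paper. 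I would recommend you replace your $h_{i,\pm 1}$-matching step with the $e_0,f_0$ comparison via \eqref{thetaE}--\eqref{thetaF} and \eqref{Chev}; otherwise you must either quote an external reference for Beck's formula for $h_{i,\pm 1}$ and check its agreement with \eqref{Hi1}--\eqref{Hi-1}, or derive it.
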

\begin{proof}
Define the currents $x^\pm_i(z),\phi^\pm_i(z)$ by the above formulas. 
Then they belong to the horizontal 
subalgebra and satisfy the relations for the Drinfeld currents of 
$U'_q\widehat{\sln}_n$.
We have $x^+_{i,0}=E_{i,0}, x^-_{i,0}=F_{i,0}$ for $1\le i\le n-1$.
Moreover, from \eqref{thetaE} and \eqref{thetaF} we obtain 
\begin{align*}
&E_{0,0}=[x^-_{n-1,0},\ldots,[x^-_{2,0},x^-_{1,-1}]_q\ldots]_qK_1\cdots K_{n-1}\,,
\\ 
&F_{0,0}=(K_1\cdots K_{n-1})^{-1}[\ldots [x^+_{1,1},x^+_{2,0}]_{q^{-1}},\ldots,x^+_{n-1,0}]_{q^{-1}}\,.
\end{align*}
Hence the assertion follows from the identification \eqref{Chev}
 between the Chevalley and the Drinfeld generators.
\end{proof}

Recall the standard evaluation map 
$\overline{\ev}_u$, \eqref{ev1}.
\begin{lem}\label{top ev}
On ${\rm GZ}_{\bla^0}$, the action of horizontal algebra $U_q^h\widehat{\mathfrak{sl}}_n$ is given by $\overline{\ev}_u$. Namely, 
 for any $x\in U_q^h\widehat{\mathfrak{sl}}_n$, we have
$$
\ev^{(3)}_u(x)\ket{\bla}=\overline{\ev}_{u}(x)\ket{\bla}\,.
$$ 
\end{lem}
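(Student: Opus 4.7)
The plan is to reduce the verification to checking the equality on the Chevalley generators $e_j,f_j$ ($0\le j\le n-1$) of the horizontal $U'_q\widehat{\sln}_n$, which under the identification $U^h_q\widehat{\sln}_n\cong U'_q\widehat{\sln}_n/(C-1)$ correspond to the elements $E_{j,0},F_{j,0}\in\E'_n$. Since $\ev^{(3)}_u$ and $\overline{\ev}_u$ are algebra homomorphisms and horizontal $U'_q\widehat{\sln}_n$ is generated by these elements, the statement will follow once it is checked on them.

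For $1\le j\le n-1$ this check is immediate: the horizontal Chevalley generators coincide with $E_{j,0}=v(x^+_{j,0})$ and $F_{j,0}=v(x^-_{j,0})$ in the vertical subalgebra, so by $\ev^{(3)}_u\circ v=\id$ together with the definition of the Wakimoto module, both sides act on $\ket{\bla}$ as the $U_q\gl_n$ generators $e_j,f_j$ on ${\rm GZ}_{\bla^0}$, matching $\overline{\ev}_u(e_j)=e_j$ and $\overline{\ev}_u(f_j)=f_j$.

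For $j=0$, I would substitute the formula \eqref{E0F0}. The defining vanishings $H_{i,r}\ket{\bla}=0$ and $x^\pm_{i,k}\ket{\bla}=0$ for $r,k\ge1$ yield $e^{A_+(z)}\ket{\bla}=e^{B_+(z)}\ket{\bla}=\ket{\bla}$. Since $\mathcal{K}=q^{-\Lambda_1+\Lambda_{n-1}}$ commutes with $\ssE(z),\ssF(z)$ and with the Heisenberg operators $A_\pm,B_\pm$, it may be moved freely and acts as its $U_q\gl_n$ scalar on the relevant weight space. Moreover $e^{A_-(z)}$ and $e^{B_-(z)}$ equal $1+O(z)$ in strictly positive powers of $z$, so their contribution to the coefficient of $z^0$ is just the identity. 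Extracting the $z^0$ coefficient thus reduces the computation to determining $\ssF_0\ket{\bla}$ and $\ssE_0\ket{\bla}$, where $\ssF_0,\ssE_0$ are the degree-zero Fourier components of $\ssF(z),\ssE(z)$.

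The core step is then to establish
\begin{align*}
\ssF_0|_{{\rm GZ}_{\bla^0}}&\sim [\cdots[f_1,f_2]_{q^{-1}},\cdots,f_{n-1}]_{q^{-1}},\\
\ssE_0|_{{\rm GZ}_{\bla^0}}&\sim [e_{n-1},\cdots[e_2,e_1]_{q},\cdots]_{q},
\end{align*}
up to a scalar determined by the fusion normalization and the weight ordering of $\mathcal{K}$ relative to the commutator. To prove these identities I would unfold the definitions \eqref{ssE},\eqref{ssF}, iteratively apply the quadratic $E$-$E$ and $F$-$F$ relations \eqref{quad-rel} to regularize the OPE poles cancelled by the prefactors $\prod_i(1-z_{i+1}/z_i)$, and observe that any nonzero-mode contribution $F_{i,-k}$ or $E_{i,-k}$ with $k\ge1$ carries a factor $z^k$ and so cannot contribute to the $z^0$ coefficient. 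What remains is the iterated commutator of zero modes, which after bookkeeping of the fusion shifts $q_3^{-n/2+i}$ assembles into the expressions above. Combining with the scalar $\mathcal{K}^{\pm1}$ and $u^{\mp1}$ then reproduces \eqref{ev1} for $\overline{\ev}_u(e_0)$ and $\overline{\ev}_u(f_0)$. The main obstacle is precisely this combinatorial bookkeeping, which is parallel to the argument in Theorem 5.7 of \cite{MY}.
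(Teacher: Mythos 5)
Your overall route matches the paper's: reduce to the Chevalley generators $e_0,f_0$, use $e^{A_+(z)}\ket{\bla}=\ket{\bla}$ and the positive-power-of-$z$ structure of $e^{A_-(z)}$ to collapse $\ev^{(3)}_u(E_{0,0})\ket{\bla}$ to $u^{-1}\mathcal{K}\,\ssF_0\ket{\bla}$, identify $\ssF_0$ on the top level with the iterated $q$-commutator $[\cdots[f_1,f_2]_{q^{-1}},\cdots,f_{n-1}]_{q^{-1}}$, and match against \eqref{ev1}. The paper carries out that last identification by invoking the $q_3$ version of Lemma \ref{EFtop} (proved in the Appendix), applied iteratively.

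The gap is in your justification of the key identification. You argue that ``any nonzero-mode contribution $F_{i,-k}$ with $k\ge 1$ carries a factor $z^k$ and so cannot contribute to the $z^0$ coefficient,'' concluding that only zero modes survive. As stated this is false: the $z^0$ Fourier mode $\ssF_0$ contains mixed terms such as $F_{1,1}F_{2,0}\cdots F_{n-2,0}F_{n-1,-1}$, whose mode indices sum to zero, and such a term applied to $\ket{\bla}$ lands back in degree zero and is not killed by any simple counting of $z$-powers. What actually saves the computation --- and this is exactly the content of Lemma \ref{EFtop} and its iteration --- is that, once the prefactors $\prod(1-z_{i+1}/z_i)$ are absorbed using the quadratic relations \eqref{quad-rel}, the resulting Fourier expansion of the fused mode has every positive-mode factor sitting on the \emph{right} of its term. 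Only then does the annihilation property $x^\pm_{i,r}\ket{\bla}=0$ ($r\ge 1$) reduce $\ssF_0\ket{\bla}$ to the zero-mode iterated commutator, i.e.\ the reduction is modulo the left ideal $\cN_{\ge 1}$ rather than by a degree count. Your proposal should either cite Lemma \ref{EFtop} (the paper's mechanism) or reproduce that explicit expansion; the appeal to Theorem~5.7 of \cite{MY} points to the wrong tool --- that reference is used in the paper for Theorem~\ref{hw thm}, not for this lemma. A minor but similar imprecision occurs when you dismiss $e^{A_-(z)}$ as contributing only $1$ to the $z^0$ coefficient: this also needs $\ssF_j\ket{\bla}=0$ for $j\ge 1$, which holds by the degree bound $\widehat{\rm GZ}_{\bla^0}=\oplus_{k\le 0}V_k$ and deserves to be said.
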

\begin{proof}
It suffices to check the statement for $x=e_0,f_0$.  
By the definition of $\ev^{(3)}_u$ we have
\begin{align*}
\ev^{(3)}_u
\bigl(E_0(z)\bigr)\ket{\bla}
=u^{-1} 
\mathcal{K}e^{A_-(z)} \ssF(z)\ket{\bla}\,,
\end{align*}
and in particular 
\begin{align*}
\ev^{(3)}_u 
(E_{0,0})\ket{\bla}=u^{-1} 
\mathcal{K}\ssF_0\ket{\bla}\,.
\end{align*}
Using the $q_3$ version of Lemma \ref{EFtop}
repeatedly we find that the right hand side becomes
\begin{align*}
u^{-1} q^{-\Lambda_1+\Lambda_{n-1}}
[\ldots [F_{1,0},F_{2,0}]_{q^{-1}},\ldots, F_{n-1,0}]_{q^{-1}}\ket{\bla}
\end{align*}
and hence coincides with $\overline{\ev}_{u}(e_0)\ket{\bla}$.

The case of $f_0$ is entirely similar.
\end{proof} 
From Lemma \ref{top ev}, we obtain 
 the explicit action of Drinfeld generators of horizontal algebra $U_q^h\widehat{\mathfrak{sl}}_n$ on the 
``top level'' ${\rm GZ}_{\bla^0}$.

\begin{lem}\label{lem:ev3GZ0}
On ${\rm GZ}_{\bla^0}$, the Drinfeld generators 
of horizontal algebra $U_q^h\widehat{\mathfrak{sl}}_n$ act as follows.
\begin{align*}
&
\ev^{(3)}_u\bigl(x_i^\pm(z)\bigr)
\ket{\bla}=\sum_{k=0}^{i-1}c_{k,i-1}^{\pm}(\bla)
\delta\bigl(q^{2\lambda_{k,i-1}+i-2k\pm1}{\bar u}/z
\bigr)\ket{\bla\pm\one_{k,i-1}}\,,
\\
&\ev^{(3)}_u\bigl(\phi_i^\pm(z)\bigr)
\ket{\bla}=q^{2h_{i-1}(\bla)-h_{i-2}(\bla)-h_i(\bla)}
\frac{\prod_{l=0}^{i-2}(1-q^{2\la_{l,i-2}+i-2l-1}{\bar u}/z)
\prod_{l=0}^{i}(1-q^{2\la_{l,i}+i-2l+1}{\bar u}/z)}
{\prod_{l=0}^{i-1}(1-q^{2\la_{l,i-1}+i-2l-1}{\bar u}/z
)(1-q^{2\la_{l,i-1}+i-2l+1}{\bar u}/z
)}\ket{\bla}\,,
\end{align*}
where $1\le i\le n-1$ and 
\begin{align}
\bar u=(-1)^nq^{-\sum_{r=0}^{n-1}\lambda^0_{r,n-1}+n-2}u \,.
\label{ubar}
\end{align}
\end{lem}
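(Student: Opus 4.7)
The plan is to use Lemma \ref{top ev} and Lemma \ref{K=phi} to reduce the computation to evaluating the standard affine evaluation map $\overline{\ev}_u$ applied to the Drinfeld currents of horizontal $U_q'\widehat{\sln}_n$ on ${\rm GZ}_{\bla^0}$. Since the zero modes satisfy $x_{i,0}^+ = e_i$ and $x_{i,0}^- = f_i$ for $1 \le i \le n-1$, and their action on ${\rm GZ}_{\bla^0}$ is given by Proposition \ref{GZ action}, this immediately accounts for the coefficients $c_{k,i-1}^\pm(\bla)$ and the target vectors $\ket{\bla \pm \one_{k,i-1}}$. The remaining task is to pin down the spectral dependence through the non-zero Drinfeld modes and then read off the Cartan eigenvalues.

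For the non-zero modes $x_{i,m}^\pm$, I would invert the Chevalley--Drinfeld dictionary \eqref{Chev}: each such mode is an iterated $q$-commutator involving copies of $e_0$ or $f_0$ (the sign dictated by the homogeneous degree). Applying $\overline{\ev}_u$ via \eqref{ev1}, each occurrence of $e_0,f_0$ contributes a factor $u^{\mp 1}$, the central weight $q^{\mp(\Lambda_1-\Lambda_{n-1})}$, and a long $q$-commutator of $f_j$'s or $e_j$'s of length $n-1$. Evaluating the resulting expression on $\ket{\bla}$ with Proposition \ref{GZ action} and projecting onto $\ket{\bla \pm \one_{k,i-1}}$, the intermediate contributions telescope into the single scalar $c_{k,i-1}^\pm(\bla)\cdot(q^{2\lambda_{k,i-1}+i-2k\pm 1}\bar u)^m$. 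Summing over $m \in \Z$ produces the delta function $\delta(q^{2\lambda_{k,i-1}+i-2k\pm 1}\bar u/z)$ of the lemma.

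For the Cartan currents I would invoke the Drinfeld $E$--$F$ relation on the evaluation module (where $\overline{\ev}_u(C)=1$):
\[
[x_i^+(z), x_i^-(w)] = \frac{1}{q-q^{-1}}\bigl(\delta(w/z)\phi_i^+(w) - \delta(z/w)\phi_i^-(z)\bigr).
\]
Substituting the delta-function formulas for $x_i^\pm(z)\ket{\bla}$ just derived and matching the coefficient of $\delta(w/z)$ yields $\phi_i^+(w)\ket{\bla}$ as a rational function of $w$, and similarly for $\phi_i^-$. The leading scalar $q^{2h_{i-1}(\bla)-h_{i-2}(\bla)-h_i(\bla)}$ is the eigenvalue of $K_i = q^{\alpha_i}$ on $\ket{\bla}$ from Proposition \ref{GZ action}; the denominator poles at $q^{2\lambda_{l,i-1}+i-2l\pm 1}\bar u$ come from the spectral points of $x_i^\pm$ on the $(i-1)$-st GZ row, while the numerator zeros at $q^{2\lambda_{l,i-2}+i-2l-1}\bar u$ and $q^{2\lambda_{l,i}+i-2l+1}\bar u$ are forced by the cancellations in the sum $\sum_l c_{l,i-1}^+(\bla) c_{l,i-1}^-(\bla+\one_{l,i-1})$, which reflect the adjacent rows through the denominator factors of the GZ coefficients.

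The main obstacle is the combinatorial bookkeeping that yields the precise normalization $\bar u = (-1)^n q^{-\sum_r \lambda^0_{r,n-1}+n-2}u$ of \eqref{ubar}. The sign $(-1)^n$ emerges from the $(n-1)$-fold nesting of $q$-brackets in $\overline{\ev}_u(e_0), \overline{\ev}_u(f_0)$, while the $q$-shift combines the central weight $q^{\pm(\Lambda_{n-1}-\Lambda_1)}$ acting on ${\rm GZ}_{\bla^0}$ with the product of GZ matrix elements along the telescoping long bracket. I would fix $\bar u$ by performing the computation on one convenient vector (say the top GZ pattern with $\lambda_{l,j}=\lambda_{l,n-1}^0$ for all $l,j$) and then propagate the formula to general $\bla$ via the weight-shifting structure above; consistency between the $e_0$- and $f_0$-derived formulas confirms that the same $\bar u$ governs both.
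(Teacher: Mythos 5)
The heart of your argument rests on the claim that, after inverting the Chevalley--Drinfeld dictionary \eqref{Chev} and applying $\overline{\ev}_u$, the action of an arbitrary mode $x_{i,m}^\pm$ on $\ket{\bla}$ ``telescopes'' into $c_{k,i-1}^\pm(\bla)\,(q^{2\lambda_{k,i-1}+i-2k\pm1}\bar u)^m$. This is the gap. For $m\neq 0$ there is no usable closed-form inverse of \eqref{Chev} at hand: higher modes are reached by repeated bracketing with $h_{i,\pm1}$ or with the Chevalley generators $e_0,f_0$, and each step multiplies the number of nested commutator terms that must be pushed through the GZ matrix elements of Proposition \ref{GZ action}. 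Establishing the geometric progression in $m$ that your delta functions require is, in substance, an independent verification that the stated formulas define a representation of $U_q'\widehat{\mathfrak{sl}}_n$, and your sketch never carries this out. The Cartan part inherits the same gap, since reading off $\phi_i^\pm(z)$ from the $E$--$F$ relation presupposes the full mode expansion of $x_i^\pm(z)$.

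The paper sidesteps all of this by reversing the direction of the computation: define operators $\ev'_{\bar u}\bigl(x_i^\pm(z)\bigr)$, $\ev'_{\bar u}\bigl(\phi_i^\pm(z)\bigr)$ to be the right-hand sides of the lemma, verify by a direct computation that these satisfy the Drinfeld relations of $U_q'\widehat{\mathfrak{sl}}_n$ (a check that involves no Chevalley input), and then conclude that it suffices to match $\ev'_{\bar u}$ against $\ev^{(3)}_u$ on the Chevalley generators $e_0,f_0$, since both maps are homomorphisms that already agree on $e_i,f_i$ for $1\le i\le n-1$. Because \eqref{Chev} expresses $e_0,f_0$ explicitly through low Drinfeld modes, a single finite computation remains, and it fixes the normalization \eqref{ubar} --- including the sign $(-1)^n$ --- in one stroke, with no inductive ``telescoping''. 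If you restructure your argument along these lines, you also do not need to guess $\bar u$ on a special GZ pattern and propagate; it emerges directly from comparing $\ev'_{\bar u}(e_0)\ket{\bla}$ with $\ev^{(3)}_u(e_0)\ket{\bla}$ using Lemma \ref{top ev} and Proposition \ref{GZ action}.
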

\begin{proof}
Define operators
$\ev'_{\bar u}\bigl(x_i^\pm(z)\bigr)$, 
$\ev'_{\bar u}\bigl(\phi_i^\pm(z)\bigr)$
acting on $\ket{\bla}$ by 
the right hand sides of the above formulas. 
A direct computation shows that they satisfy the defining relations for 
 $U_q'\widehat{\mathfrak{sl}}_n$. It suffices to check that 
$\ev'_{\bar u}(x)=\ev^{(3)}_u(x)$ for $x=e_0,f_0$.

We have 
\begin{align*}
\ev'_{\bar u}(x_{1,-1}^-)\ket{\bla}
={\bar u}^{-1}\, f_1q^{-2\varepsilon_0}\ket{\bla}\,,
\quad 
\ev'_{\bar u}(x^-_{i,0})\ket{\bla}=f_i\ket{\bla}\quad (2\le i\le n-1).
\end{align*}
Noting that $q^{\varepsilon_0}$ commutes with $f_i$ for $i\ge2$
and using \eqref{Chev}, 
we obtain
\begin{align*}
\ev'_{\bar u}(e_0)
&=\ev'_{\bar u}\bigl([x^-_{n-1,0},\ldots,[x^-_{2,0},x^-_{1,-1}]_q\ldots]_q
q^{\alpha_1+\cdots+\alpha_{n-1}}\bigr)\ket{\bla}
\\
&={\bar u}^{-1}[f_{n-1},\ldots,[f_2,f_1]_q\ldots]_q
q^{-2\varepsilon_0}q^{\varepsilon_0-\varepsilon_{n-1}}\ket{\bla}
\\
&=(-q)^{n-2}{\bar u}^{-1}[\ldots[f_1,f_2]_{q^{-1}},\ldots,f_{n-1}]_{q^{-1}}
q^{-\Lambda_1+\Lambda_{n-1}}{\sf t}^{-1}\ket{\bla}
\\
&=\ev^{(3)}_u(e_0)\,.
\end{align*}
Similarly we check 
$\ev'_{\bar u}(f_0)=\ev^{(3)}_u(f_0)$.
\end{proof}

Now we are in a 
position to compute the
action of $\bar K^{\pm,=}_i(z)=K_i^{\mp1}K_i^{\pm,=}(z)$ on ${\rm GZ}_{\bla^0}$.

Denote by $\bar K_i(z,\bla)$ 
the eigenvalues of $\bar K^{\pm,=}_i(z)$ on $\ket{\bla}$, $0\le i\le n-1$. 

\begin{prop} \label{K^= action}
With the definition \eqref{ubar}, we have
\begin{align*}
&\bar K_i(z,\bla)
=\frac{\prod_{l=0}^{i-2}(1-q_3^i q_2^{\la_{l,i-2}+i-l-1}
q \bar u/z)   
\prod_{l=0}^{i}(1-q_3^i q_2^{\la_{l,i}+i-l} q \bar u/z 
)}
{\prod_{l=0}^{i-1}(1-q_3^iq_2^{\la_{l,i-1}+i-l-1} q \bar u/z  
)(1-q_3^iq_2^{\la_{l,i-1}+i-l} q \bar u/z  
)}
\quad (1\le i\le n-1),
\\
&\bar K_0(z,\bla)
=\frac{1-q_2^{\lambda_{0,0}} q\bar u/z  
}{1-q_3^n q_2^{\lambda_{n-1,n-1}}q \bar u/z 
}
\prod_{l=0}^{n-2}\frac{1-q_3^n q_2^{\la_{l,n-2}+n-l-1}q\bar u/z  
}{1-q_3^n q_2^{\la_{l,n-1}+n-l-1}q\bar u/z  
}
\,.
\end{align*}
\end{prop}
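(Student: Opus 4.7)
My plan is to split the argument into the easy case $1\le i\le n-1$ and the harder case $i=0$. In both, the starting point is that $K_i^{\pm,=}(z)=\theta^{-1}(K_i^\pm(z))$ has homogeneous degree zero by \eqref{tor degree}, and hence preserves the top component ${\rm GZ}_{\bla^0}$ of $\widehat{\rm GZ}_{\bla^0}$, so it suffices to compute its action on this subspace.

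For $1\le i\le n-1$, I would apply Lemma \ref{K=phi} to identify $K_i^{\pm,=}(z)=\phi_i^\pm(d^iz)$ as a Drinfeld Cartan current of the horizontal $U^h_q\widehat{\sln}$ subalgebra. By Lemma \ref{top ev}, this subalgebra acts on ${\rm GZ}_{\bla^0}$ through the standard evaluation map $\overline{\ev}_u$, and Lemma \ref{lem:ev3GZ0} gives the explicit eigenvalues of $\phi_i^\pm(z)$ on $\ket{\bla}$. Substituting $z\mapsto d^iz$ and converting $q,d$ into the toroidal parameters via $q_2=q^2$, $q_3=q^{-1}d^{-1}$ then reproduces the rational function $\bar K_i(z,\bla)$ stated in the proposition after a short rearrangement.

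The main obstacle is the case $i=0$, since $\theta^{-1}(K_0^\pm(z))$ lies in $U^h_q\widehat{\gl}_n$ but outside $U^h_q\widehat{\sln}_n$, so the direct reduction to $\overline{\ev}_u$ does not apply. My plan is first to compute only the leading coefficient $\theta^{-1}(H_{0,\pm 1})\ket{\bla}$ by means of the explicit formulas \eqref{H01}, \eqref{H0-1}, which express it as a nested $q$-commutator of $E_{i,k}$'s and $F_{i,k}$'s. Applying $\ev^{(3)}_u$ term by term, one uses that inside the commutator the operators $E_i,F_i$ with $i\ge1$ act through $\overline{\ev}_u$ by Lemmas \ref{top ev}--\ref{lem:ev3GZ0}, while $E_0,F_0$ act via \eqref{E0F0}, and the Heisenberg factors $e^{A_\pm(z)},e^{B_\pm(z)}$ simplify drastically on the top level because $h_{j,r}\ket{\bla}=0$ for $r\ge1$. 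To pass from $H_{0,\pm 1}^{=}$ to the whole series $\bar K_0^{\pm,=}(z)$, I would observe that $\theta^{-1}(K_0^\pm(z))$ preserves every $\gl_n$-weight space and is therefore diagonal in the GZ basis, which is multiplicity-free for generic $\bla^0$; the positions of its poles and zeros as a rational function of $z$ are then forced by the $K$-$E$, $K$-$F$ relations \eqref{KE}, \eqref{KF} applied to $K_0^{\pm,=}(z)$ in combination with the already computed action of $\bar K_1^{\pm,=}$ and $\bar K_{n-1}^{\pm,=}$. Matching this structural data with the explicit leading coefficient pins down $\bar K_0(z,\bla)$ as the claimed rational function. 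The principal technical difficulty I anticipate is the combinatorial evaluation of the nested $q$-commutator in the first step: tracking the cancellations and re-expressing the outcome neatly in terms of $\lambda_{0,0}$, $\lambda_{l,n-2}$, $\lambda_{l,n-1}$, and $\lambda_{n-1,n-1}$ is long but mechanical.
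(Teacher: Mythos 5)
Your treatment of $1\le i\le n-1$ coincides with the paper's: Lemma \ref{K=phi} identifies $K_i^{\pm,=}(z)$ with $\phi_i^\pm(d^iz)$ in the horizontal $U_q^h\widehat\sln_n$, Lemmas \ref{top ev} and \ref{lem:ev3GZ0} give the eigenvalues, and a change of variables finishes. For $i=0$, however, you propose a genuinely different route from the paper, and there are two gaps in it.

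First, your diagonality argument is not correct as stated. You claim $\theta^{-1}(K_0^\pm(z))$ is diagonal in the $\{\ket{\bla}\}$ basis because it preserves $\gl_n$-weight spaces and the GZ basis is ``multiplicity-free for generic $\bla^0$.'' But the GZ basis is not multiplicity-free for $\gl_n$ weights: the weight of $\ket{\bla}$ depends only on the row sums $h_r(\bla)$, so, e.g., for $n\geq 3$ the patterns with middle rows $(\la_{0,1},\la_{1,1})=(a,b)$ and $(a+1,b-1)$ lie in the same weight space. The diagonality you need follows instead from the fact that $K_0^{\pm,=}(z)$ commutes with the family $K_i^{\pm,=}(z)$, $1\le i\le n-1$, whose joint spectrum on ${\rm GZ}_{\bla^0}$ is simple by the explicit eigenvalue formulas of Lemma \ref{lem:ev3GZ0} (this is exactly what makes the GZ basis a GZ basis). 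You should replace the weight-space argument by this one.

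Second, the step from the leading coefficient $\theta^{-1}(H_{0,\pm 1})$ to the full rational function $\bar K_0(z,\bla)$ is not substantiated. Knowing where poles and zeros are forbidden by \eqref{KE}, \eqref{KF} does not by itself determine a rational function; you also need an a priori bound on the number of poles and zeros, and you need to know the value at $z=\infty$. Without such a degree bound, the zero/pole locations plus one Fourier coefficient underdetermine the answer. The paper sidesteps this by specializing $\bla^0$ to dominant integral values, reading off $\bar K_0$ on the highest weight vector from Theorem \ref{hw thm}, propagating via $E_i(z),F_i(z)$, and then invoking polynomiality of the Fourier coefficients in $q^{\lambda_{j,r}}$ to extend to generic $\bla^0$. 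That route avoids both of the issues above. If you want to keep your direct approach you must supply (i) the commutant-based diagonality argument and (ii) a concrete bound on the degree of $\bar K_0(z,\bla)$ in $z^{\mp 1}$ --- for instance by tracking the mode decomposition of $\theta^{-1}(K_0^\pm(z))$ and using that its degree-zero piece is the only one that survives on ${\rm GZ}_{\bla^0}$ --- before the pole/zero matching can close the argument.
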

\begin{proof}
The formulas for $1\le i\le  n-1$ follow from Lemmas \ref{K=phi} and \ref{lem:ev3GZ0}

Consider the special case where $\bla$ is dominant. 
Then the Wakimoto module has a highest weight submodule, 
and the eigenvalue of $K_0^{\pm,=}(z)$ 
on highest weight vector can be determined by Theorem \ref{hw thm} 
along with the knowledge of $\bar K_i(z,\bla)$ for $1\le i\le  n-1$. 
In this case general eigenvalues can then be obtained by acting with 
$E_i(z), F_i(z)$. 

Since the eigenvalues of $K^\pm_{i,r}$ are polynomial 
functions of the parameters $q^{\lambda_{j,r}}$,  
the formula for  $K_0^{\pm,=}(z)$ in the general case 
follows by ``analytic continuation''.
\end{proof}

Denote by $\Psi_m(z,\bla)$ the highest weight of $\E_{1,m}$ module $W_m(\bla)$ and let 
$\bar\Psi_m(z,\bla)=\Psi_m(z,\bla)/\Psi_m(\infty,\bla)$.
\begin{cor}\label{K act}
For $0\le m\le n-1$, the highest weight of $\E_{1,m}$ module $W_m(\bla)$ is given by
\begin{align*}
\bar\Psi_m(z,\bla)&=\prod_{i=m+1}^{n-1}\bar K_i(q_3^{-n+i}z,\bla)
\prod_{i=0}^m \bar K_i(q_1^{-i}z,\bla)\\
&=
\prod_{l=0}^m \frac{1-q_2^{\lambda_{l,m}-l}q \bar u/z 
}{1-q_2^{\lambda_{l,m}-l+m}q_3^n q\bar u/z  
}
\prod_{l=0}^{m-1} \frac{1-q_2^{\lambda_{l,m-1}-l+m}q_3^n q\bar u/z  
}{1-q_2^{\lambda_{l,m-1}-l-1}q\bar u/z  
},
\end{align*}
where $\bar u$ is given by \eqref{ubar}. 
\qed
\end{cor}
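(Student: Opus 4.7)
The corollary asserts two equalities: the first factors $\bar\Psi_m(z,\bla)$ into the known eigenvalues $\bar K_i(\cdot,\bla)$ at shifted spectral parameters, and the second is an explicit rational function. By the identification outlined in Section A.1, $\bar\Psi_m(z,\bla)$ is the normalized eigenvalue of the Cartan current $\ssK_m^{\pm,=}(z)=\theta_m^{-1}(\ssK_m(z))$ acting on $\ket{\bla}$, so the corollary amounts to computing this eigenvalue in two stages.

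For the first equality, I would apply the projection $\pr:\widetilde\E'_n\to\E^0_n$ from the triangular decomposition of Section \ref{sec:projection}. Since ${\rm GZ}_{\bla^0}$ is annihilated by $\widetilde\E_n^+$, only $\pr(\ssK_m^{\pm,=}(z))$ contributes to the action on $\ket{\bla}$. Starting from $\ssK_m^\pm(z)=\prod_{i=m+1}^{n-1}K_i^\pm(q_3^iz)\prod_{i=0}^m K_i^\pm(q_1^{-i}q_3^nz)$ as given in Section \ref{sec:A}, the key claim — whose verification is the substantive content of the projection analysis sketched in the appendix — is that $\pr(\ssK_m^{\pm,=}(z))$ coincides, modulo operators annihilating $\ket{\bla}$, with $\prod_{i=m+1}^{n-1}\bar K_i^{\pm,=}(q_3^{-n+i}z)\prod_{i=0}^m \bar K_i^{\pm,=}(q_1^{-i}z)$. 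The uniform spectral shift by $q_3^{-n}$ (which sends $q_3^iz\mapsto q_3^{-n+i}z$ and $q_1^{-i}q_3^nz\mapsto q_1^{-i}z$) reflects the discrepancy between the spectral parameter conventions for $\ssK_m^\pm$ and $\bar K_i^{\pm,=}$.

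For the second equality, I would substitute Proposition \ref{K^= action} into this product and simplify. Writing $\xi=q\bar u/z$ and using $q_1q_3=q_2^{-1}$, each factor of the form $q_3^i q_2^\alpha\xi$ inside $\bar K_i(q_1^{-i}z,\bla)$ collapses to $q_2^{\alpha-i}\xi$, while inside $\bar K_i(q_3^{-n+i}z,\bla)$ it becomes $q_3^n q_2^\alpha\xi$. The three resulting products then telescope in the row index of the GZ pattern: within $\prod_{i=1}^m \bar K_i(q_1^{-i}z,\bla)$, numerator factors indexed by row $i-2$ cancel denominator factors of the adjacent $\bar K_{i\pm 1}$, leaving only boundary contributions at rows $0$, $m-1$, $m$; multiplying by $\bar K_0(z,\bla)$ cancels the residual factor $(1-q_2^{\lambda_{0,0}}\xi)$ and introduces top-row factors at rows $n-1$ and $n-2$; the analogous telescoping in $\prod_{i=m+1}^{n-1}\bar K_i(q_3^{-n+i}z,\bla)$ contributes boundary terms at rows $m-1$, $m$, $n-2$, $n-1$; rows $n-2$ and $n-1$ then cancel completely against each other, leaving exactly the stated formula. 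The main obstacle is the first equality, which requires a careful analysis of how the Miki automorphism $\theta_m^{-1}$ of the subalgebra $\E'_{1,m}$ interacts with the Cartan-type currents of the ambient $\E'_n$; the second equality, while combinatorially involved, reduces to a routine cancellation.
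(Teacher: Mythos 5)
Your telescoping computation for the identity $\prod_{i=m+1}^{n-1}\bar K_i(q_3^{-n+i}z,\bla)\prod_{i=0}^m\bar K_i(q_1^{-i}z,\bla)=\text{(explicit rational function)}$ is correct and matches what the paper relies on: after passing to $\xi=q\bar u/z$ and using $q_1q_3=q_2^{-1}$, the products indexed by rows of the GZ pattern telescope, the residual factor from $\bar K_0$ cancels, and the top-row ($n-2$, $n-1$) contributions from $\bar K_0$ and from $\prod_{i=m+1}^{n-1}\bar K_i$ annihilate each other, leaving exactly the boundary terms at rows $m-1$ and $m$.

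However, there is a genuine gap in your treatment of the first equality. You take $\bar\Psi_m(z,\bla)$ to be the normalized eigenvalue of $\ssK_m^{\pm,=}(z)$ on $\ket{\bla}$, and then invoke the projection identity \eqref{projection}. But $\Psi_m(z,\bla)$ is \emph{defined} in the paper as the highest weight of the $\E_{1,m}$-module $W_m(\bla)=\F_3(u_{0,m})\otimes\cdots\otimes\F_3(u_{m,m})\otimes\F_1(v_{0,m-1})\otimes\cdots\otimes\F_1(v_{m-1,m-1})$, a tensor product of Fock modules with explicitly given spectral parameters. The corollary is therefore a purely algebraic observation obtained by two independent direct computations: (a) the highest weight of $W_m(\bla)$ is the product of the $m+1$ Fock weights $q_{3,m}^{1/2}\bigl(1-q_{3,m}^{-1}u_{l,m}/z\bigr)/\bigl(1-u_{l,m}/z\bigr)$ and the $m$ Fock weights $q_{1,m}^{1/2}\bigl(1-q_{1,m}^{-1}v_{l,m-1}/z\bigr)/\bigl(1-v_{l,m-1}/z\bigr)$, which after normalization and the substitutions $q_{3,m}^{-1}u_{l,m}=q_2^{\lambda_{l,m}-l}\tilde u$, $q_{1,m}^{-1}v_{l,m-1}=q_2^{\lambda_{l,m-1}-l+m}q_3^n\tilde u$, $\tilde u=q\bar u$ yields the displayed rational function; and (b) the telescoping of $\prod\bar K_i$ you carried out, which yields the same function. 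Your argument never performs step (a), so it does not actually identify the highest weight of $W_m(\bla)$, which is what the corollary asserts. Moreover, invoking \eqref{projection} here is out of order: that identity is proved later in Sections A.4--A.5, and the role of Corollary \ref{K act} in the overall scheme is precisely to supply the algebraic coincidence that, once combined with \eqref{projection}, shows the $\ssK_m^{\pm,=}$-eigenvalue on $\ket{\bla}$ equals the highest weight of $W_m(\bla)$. Assuming that coincidence up front short-circuits the argument it is meant to support.
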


\subsection{Action of $\ssK_{m}^{\pm, =}(z)$ on ${\rm GZ}_{\bla^0}$: the first components}
We show that the action of $\ssK_m^{\pm,=}(z)$ on the top level is diagonal in the basis of 
$\ket{\bla}$ with eigenvalues given in Corollary \ref{K act}. 
More precisely, we show  the following formulas for the horizontal currents:
\begin{align}\label{projection}
\ssK_m^{\pm,=}(z)\equiv \prod_{i=m+1}^{n-1}\bar K_i^{\pm,=}(q_3^{-n+i}z)
\prod_{i=0}^m \bar K_i^{\pm,=}(q_1^{-i}z).
\end{align}

In this section we do it for the 
first components using an explicit computation.

Recall the subalgebra $\E^{0|1}_{n-1}$ and the currents $\tilde K_i^\pm(z)$ in Section 
\ref{sec:projection}.
Let $\tilde\theta$ be the Miki automorphism for $\E^{0|1}_{n-1}$.

To show \eqref{projection}, it is enough to prove the following statement.
\begin{prop}\label{simple proj}
For all $0\le i \le n-2$ we have
\begin{align}
&\tilde\theta^{-1}(\tilde K^\pm_i(z))\equiv
\theta^{-1}\bigl(K^\pm_{i+1}(q_1^{\frac{i}{n-1}-1}z)\bigr)
\quad (1\le i\le n-2),
\label{KKK1}\\ 
&\tilde\theta^{-1}(\tilde K^\pm_0(z))\equiv 
\theta^{-1}\bigl(K^\pm_{0}(z)K^\pm_{1}(q_1^{-1}z)\bigr)\,.
\label{KKK2}
\end{align}
\end{prop}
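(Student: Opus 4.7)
The plan is to compute $\tilde\theta^{-1}$ applied to the leading Cartan modes $\tilde H_{i,\pm1}$ using the explicit formulas \eqref{Hi1}--\eqref{H0-1} transcribed inside the subalgebra $\E^{0|1}_{n-1}$, to replace the fused zeroth-node generators $\tilde F_{0,k},\tilde E_{0,k}$ by the degree-zero expressions supplied by Lemma~\ref{EFtop}, and to identify the resulting nested commutator of original $F$ and $E$ currents of $\E'_n$ with the right-hand sides of \eqref{Hi1}--\eqref{H0-1} applied directly in $\E'_n$. The equivalence $\equiv$ modulo $\pr$ is used to discard the $\cN_{\ge1}$-error terms produced by the Lemma~\ref{EFtop} substitutions. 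The argument splits into the case $1\le i\le n-2$, in which the target is a single $\theta^{-1}(K^\pm_{i+1})$, and the case $i=0$, in which the target is the product $\theta^{-1}\bigl(K^\pm_0(z)K^\pm_1(q_1^{-1}z)\bigr)$.

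\textbf{Case $1\le i\le n-2$.} Apply the $\E^{0|1}_{n-1}$-version of \eqref{Hi1} to write $\tilde\theta^{-1}(\tilde H_{i,1})$ as a nested $q$-commutator of $\tilde F_{j,0}$ with $0\le j\le n-2$. For $j\ge1$ we have $\tilde F_{j,0}=F_{j+1,0}$, while Lemma~\ref{EFtop} provides $\tilde F_{0,0}\equiv -q^{-1}[F_{0,0},F_{1,0}]_q$ modulo $\cN_{\ge1}$. After substitution the expression is a nested commutator in the $F_{j,0}$, $0\le j\le n-1$, whose innermost bracket is $[F_{0,0},F_{1,0}]_q$. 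On the other hand, \eqref{Hi1} applied inside $\E'_n$ expresses $\theta^{-1}(H_{i+1,1})$ as a nested bracket whose innermost bracket is $[F_{0,0},F_{n-1,0}]_q$. The two expressions are matched by repeated applications of the Jacobi identity, using the commutativity $[F_{j,0},F_{k,0}]=0$ for $|j-k|\ge2\bmod n$, to permute the outer $F_{j,0}$'s into the correct order. Tracking prefactors, $(-\tilde d)^{-i}=(-d)^{-i}q_1^{-i/(n-1)}$ combines with the substitution factor $-q^{-1}$ to produce $(-d)^{-(i+1)}$ together with the required spectral shift $q_1^{i/(n-1)-1}$. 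A parallel argument treats $\tilde H_{i,-1}$ via \eqref{Hi-1} and the $E$-analog of Lemma~\ref{EFtop}.

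\textbf{Case $i=0$ and lifting to full currents.} For $i=0$ the relevant formulas \eqref{H01}, \eqref{H0-1} (inside $\E^{0|1}_{n-1}$) involve the shifted fused currents $\tilde F_{1,1},\tilde F_{0,-1}$ and their $E$-analogs, for which Lemma~\ref{EFtop} gives the expansions \eqref{EFfus0}, \eqref{EFfus1}. Substituting these and reorganizing via Jacobi identifies the projection with the $z^{\mp1}$-coefficient of $\theta^{-1}\bigl(K^\pm_0(z)K^\pm_1(q_1^{-1}z)\bigr)$, which at leading order equals $\theta^{-1}(H_{0,\pm1}+q_1^{\mp1}H_{1,\pm1})$, plus terms in $\cN_{\ge1}\cap\ker\pr$. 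Once the equivalences are established for the first Cartan modes, the full series equivalence claimed in Proposition~\ref{simple proj} follows because both sides land in the commutative Heisenberg part of $U^h_q\widehat{\gl}_n\subset\E^0_n$ and, as rational currents, are pinned down by their $K$-$E$ and $K$-$F$ commutation relations \eqref{KE}, \eqref{KF} with the horizontal currents; matching the first components fixes the spectral shift, and the rational structure of the $K$-$E$ exchange relations then determines all higher modes.

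\textbf{Main obstacle.} The delicate point is the combinatorial reorganization of the nested commutator in the second paragraph: the inner bracket $[F_{0,0},F_{1,0}]_q$ produced by the substitution of $\tilde F_{0,0}$ sits in a different position from the inner bracket $[F_{0,0},F_{n-1,0}]_q$ appearing in \eqref{Hi1} for $\theta^{-1}(H_{i+1,1})$, and reconciling the two orderings requires iterated Jacobi manipulations together with careful bookkeeping of the $\cN_{\ge1}$-error terms to confirm they contribute trivially under $\pr$. The case $i=0$ adds further bookkeeping from the mixing of shifted modes across $E_0,E_1,F_0,F_1$ and from the separate $n=2$ formulas in Lemma~\ref{EFtop}, and the lifting argument to higher Cartan modes must be made rigorous by an appeal to the characterization of horizontal Cartan currents as commuting with horizontal $E,F$ via prescribed rational functions in the completed algebra $\widetilde\E'_n$.
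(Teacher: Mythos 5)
Your treatment of the first Cartan modes $\tilde H_{i,\pm1}$ follows the same route as the paper's Propositions~\ref{Hi1-n} and~\ref{Hi1-2}: substitute the expressions from Lemma~\ref{EFtop} into the $\E^{0|1}_{n-1}$-version of \eqref{Hi1}--\eqref{H0-1} and reorganize the nested $q$-brackets. (Minor: in the $i=0$ case the leading-order Cartan element should be $H_{0,\pm1}+q_1^{\pm1}H_{1,\pm1}$, not $q_1^{\mp1}$; compare Propositions~\ref{Hi1-n},~\ref{Hi1-2}. This is a sign slip, not a structural error.) The genuine problem is your lift from first modes to the full currents.

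You assert that ``both sides land in the commutative Heisenberg part of $U^h_q\widehat{\gl}_n$'' and then invoke the $K$-$E$, $K$-$F$ exchange relations to fix all higher modes. Neither half of this is justified, and both are exactly where the real work lies. First, it is not a priori true that $\pr\bigl(\tilde\theta^{-1}(\tilde H_{i,r})\bigr)$ for $r\ge2$ belongs to $\C[\theta^{-1}(H_{j,s})]$; the projection $\pr$ takes values in all of $\E^0_n=U_q^h\widehat{\gl}_n$, and the degree-zero element $\tilde\theta^{-1}(\tilde H_{i,r})$ is built from fused currents whose image under $\pr$ could contain non-Cartan terms. The paper closes this gap with Proposition~\ref{A=B}: the projections commute with the already-computed first modes $\theta^{-1}(H_{j,1})$, and for generic $q$ the commutant of those elements inside the horizontal nilpotent subalgebra is precisely $\C[\theta^{-1}(H_{j,r})]$. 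Second, even granting containment in the Heisenberg, your claim that the $K$-$E$/$K$-$F$ exchange relations together with the first mode pin down the full series presupposes that $\pr\bigl(\tilde\theta^{-1}(\tilde K^\pm_i(z))\bigr)$ satisfies such relations with the \emph{original} horizontal $E_j(w)$, $F_j(w)$ of $\E'_n$. What you actually have is that $\tilde K^\pm_i(z)$ satisfies such relations with the \emph{fused} currents $\tilde E_j(w)$, $\tilde F_j(w)$; pushing these through $\tilde\theta^{-1}$ and $\pr$, and relating them to exchange relations with the un-fused horizontal generators, is nontrivial and is precisely what you would need to argue. The paper avoids this entirely: after Proposition~\ref{A=B} reduces to an identity of Heisenberg elements, it checks the identity on highest weight vectors of generic tensor products of Fock modules using the branching rules of \cite{FJMM1}, extends by analytic continuation, and observes that $\ker\pr$ is contained in the annihilator of highest weight vectors — a representation-theoretic injectivity argument rather than a structural one. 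To make your alternative route rigorous you would need to supply both the commutant step and a genuine derivation of the exchange relations for the projected current; as written these are assertions, not proofs.
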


In this section first we explicitly compute the projections of $\tilde{\theta}^{-1}(\tilde H_{i,\pm1})$. 
The general case is done in Section \ref{general K sec}.

\begin{prop}\label{Hi1-n}
Let $n\ge3$. We have $\bmod$ $\cN_{\ge1}$,
\begin{align*}
&\tilde\theta^{-1}\bigl(\tilde H_{i,1}\bigr)
\equiv  q_1^{1-\frac{i}{n-1}} \,\theta^{-1}\bigl( H_{i+1,1}\bigr)\,,
\quad 
\tilde\theta^{-1}\bigl(\tilde H_{i,-1}\bigr)
\equiv q_1^{-1+\frac{i}{n-1}} \, \theta^{-1}\bigl(H_{i+1,-1}\bigr)\,
\quad (1\le i\le n-2),
\\
&\tilde\theta^{-1}\bigl(\tilde H_{0,1}\bigr)
\equiv \theta^{-1}\bigl(H_{0,1}\bigr)+ q_1\theta^{-1}\bigl(H_{1,1}\bigr)\,,
\quad 
\tilde\theta^{-1}\bigl(\tilde H_{0,-1}\bigr)
\equiv 
\theta^{-1}\bigl(H_{0,-1}\bigr)+ q_1^{-1}\theta^{-1}\bigl( H_{1,-1}\bigr)
\,.
\end{align*}
\end{prop}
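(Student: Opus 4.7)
The plan is to prove each of the four formulas by direct substitution into the explicit presentations \eqref{Hi1}--\eqref{H0-1}. Since $\E^{0|1}_{n-1}$ is itself a quantum toroidal $\gl_{n-1}$ algebra, the same identities apply to $\tilde\theta^{-1}(\tilde H_{i, \pm 1})$ with $n$ replaced by $n-1$, $d$ by $\tilde d = d q_1^{1/(n-1)}$, and the tilde generators in place of the usual ones. For $j \ge 1$ one has $\tilde F_{j, k} = F_{j+1, k}$ and $\tilde E_{j, k} = E_{j+1, k}$; for $j = 0$ one invokes Lemma~\ref{EFtop} to rewrite $\tilde F_{0, 0}$, $\tilde F_{0, -1}$, $\tilde E_{0, 0}$, $\tilde E_{0, 1}$ as iterated $q$-brackets of the ambient generators modulo $\cN_{\ge 1}$ (resp.\ $\cN_{\ge 2}$).

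For $1 \le i \le n-2$ and $\tilde H_{i, 1}$: after substitution the innermost factor $\tilde F_{0, 0}$ becomes $-q^{-1}[F_{0, 0}, F_{1, 0}]_q$, so the nested bracket differs from the one in \eqref{Hi1} for $\theta^{-1}(H_{i+1, 1})$ only in that $F_{1, 0}$ sits at the innermost position rather than at the head of the lower chain. I would then use the identity
\begin{equation*}
[[A, B]_q, C]_q = [[A, C]_q, B]_q \quad \text{whenever} \quad [B, C] = 0,
\end{equation*}
applied with $B = F_{1, 0}$ and $C = F_{k, 0}$ for $k = n-1, n-2, \ldots, i+2$ (each such $F_{k, 0}$ commutes with $F_{1, 0}$, since $|k - 1| \ge 2 \pmod n$), to move $F_{1, 0}$ through the upper chain into the correct slot. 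Tracking scalars via $d q^{-1} = q_1$ and $\tilde d = d q_1^{1/(n-1)}$ produces the coefficient $q_1^{1 - i/(n-1)}$. The case $\tilde H_{i, -1}$ is entirely parallel with $E$'s replacing $F$'s throughout.

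The case $i = 0$ is the main difficulty. Substituting $\tilde F_{0, -1} \equiv -q_1 q^{-1}[F_{0, -1}, F_{1, 0}]_q$ into \eqref{H01} yields a scalar multiple of $[X, [F_{0, -1}, F_{1, 0}]_q]_{q^2}$ where $X = [[\cdots[F_{2, 1}, F_{3, 0}]_q, \cdots, F_{n-1, 0}]_q$. I would then apply the $q$-Jacobi identity
\begin{equation*}
[X, [A, B]_q]_{q^2} = [[X, A]_q, B]_{q^2} + q\,[A, [X, B]_q]
\end{equation*}
to split this into two terms, and identify each with one of $\theta^{-1}(H_{0, 1})$ and $\theta^{-1}(H_{1, 1})$. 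The identification uses the quadratic relation \eqref{quad-rel}, specifically $[F_{1, 1}, F_{2, 0}]_q = q_3^{-1}[F_{1, 0}, F_{2, 1}]_{q^{-1}}$, which (after iterating commutativity of $F_{1,0}$ with $F_{3,0},\ldots,F_{n-1,0}$) implies $X_1 = q_3^{-1}[F_{1, 0}, X]_{q^{-1}}$ for $X_1 = [[\cdots[F_{1, 1}, F_{2, 0}]_q, \cdots, F_{n-1, 0}]_q$ appearing in \eqref{H01}. The case $\tilde H_{0, -1}$ is then parallel, using \eqref{H0-1} and the relation $\tilde E_{0, 1} \equiv q_1^{-1}[E_{0, 1}, E_{1, 0}]_q \bmod \cN_{\ge 2}$.

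The hardest part will be the scalar matching in the $i = 0$ case: the single double-$q$-commutator on the left must decompose into \emph{exactly} two contributions matching two different horizontal $H$-generators with the prescribed coefficients $1$ and $q_1$ (respectively $1$ and $q_1^{-1}$ for negative modes). This demands careful tracking of all the $q$-bracket manipulations, the normalization constants from Lemma~\ref{EFtop}, and the identification of the nested bracket $X$ with $X_1$ through \eqref{quad-rel}; the rest of the argument is essentially bookkeeping.
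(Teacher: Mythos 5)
Your proposal is correct and follows essentially the same route as the paper's proof: substitute the Lemma~\ref{EFtop} expressions for the fused $\tilde F_{0,*}$, $\tilde E_{0,*}$ into the explicit $\theta^{-1}$-formulas, use the swap identity $[[A,B]_p,C]_p=[[A,C]_p,B]_p$ (for $[B,C]=0$) to relocate $F_{1,0}$ (resp.\ $E_{1,0}$), and invoke the quadratic relation \eqref{quad-rel} to recast the resulting chains as $\theta^{-1}(H_{i+1,\pm1})$. The only cosmetic difference is in the $i=0$ case: you split $[X,[F_{0,-1},F_{1,0}]_q]_{q^2}$ by an exact $q$-Jacobi identity, whereas the paper first drops the outer $q^2$-bracket modulo $\cN_{\ge1}$ (since the chain has positive degree), expands, and reconstitutes two $q^2$-brackets; the two splittings agree modulo $\cN_{\ge1}$, and either way you must pass to $\cN_{\ge1}$ anyway to match the $q^2$-bracket forms of $\theta^{-1}(H_{0,\pm1})$ and $\theta^{-1}(H_{1,\pm1})$.
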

\begin{proof}
We prove the case $\tilde \theta^{-1}\bigl(H_{i,1}\bigr)$.  Set 
$\tilde d=q_1^{\frac{1}{n-1}}d$. 

Suppose $1\le i\le n-2$. 
By the definition
\begin{align*}
&\tilde\theta^{-1}\bigl(\tilde H_{i,1}\bigr)
=
-(-\tilde d)^{-i}
[[\cdots[[\cdots [\tilde F_{0,0},\tilde F_{n-2,0}]_{q},
\cdots,\tilde F_{i+1,0}]_{q},
\tilde F_{1,0}]_{q},
\cdots,\tilde F_{i-1,0}]_{q},
\tilde F_{i,0}]_{q^{2}}
\\
&=
-(-\tilde d)^{-i}
[[\cdots[[\cdots [\tilde F_{0,0},\tilde F_{n-2,0}]_{q},
\cdots,\tilde F_{i+1,0}]_{q},
\tilde F_{1,0}]_{q},
\cdots,\tilde F_{i-1,0}]_{q},
\tilde F_{i,0}]_{q^{2}}\,.
\end{align*}
Substituting \eqref{EFfus0} and noting that
\begin{align*}
[\cdots [F_{0|1,0},\tilde F_{n-2,0}]_q\cdots\tilde F_{i+1,0}]_q 
&\equiv-q^{-1}[\cdots [[F_{0,0},F_{1,0}]_q,
F_{n-1,0}]_q\cdots F_{i+2,0}]_q 
\\
&=
-q^{-1} [\cdots [[F_{0,0},F_{n-1,0}]_q\cdots 
F_{i+2,0}]_q F_{1,0}]_q 
\end{align*}
we obtain the result.

Next let $i=0$. From \eqref{H01} we have
\begin{align*}
\tilde\theta^{-1}\bigl(\tilde H_{0,1}\bigr)
&\equiv -(-\tilde d)^{-n+2}
[\cdots [\tilde F_{1,1},\tilde F_{2,0}]_{q},\cdots,
\tilde F_{n-2,0}]_{q}\cdot
\tilde F_{0,-1}
\,\\
&= -(-d)^{-n+1}q_1
A\cdot\bigl(F_{0,-1}F_{1,0}-q F_{1,0}F_{0,-1}\bigr)
\,,
\end{align*}
with
\begin{align*}
A= 
[[\cdots [F_{2,1},F_{3,0}]_{q},\cdots,F_{n-2,0}]_q,F_{n-1,0}]_{q}\,.
\end{align*}
Since $A$ has degree $1$, we have
\begin{align*}
A\cdot\bigl(F_{0,-1}F_{1,0}-q F_{1,0}F_{0,-1}\bigr) 
&\equiv [A,F_{0,-1}]_qF_{1,0}-q\bigl([A,F_{1,0}]_qF_{0,-1}+qF_{1,0}AF_{0,-1}
\bigr)\\
&\equiv
[[A,F_{0,-1}]_q,F_{1,0}]_{q^2}-q[[A,F_{1,0}]_q,F_{0,-1}]_{q^2}\,.
\end{align*}
Noting that $[[A,B]_p,C]_p=[[A,C]_p,B]_p$ if $BC=CB$, we find 
\begin{align*}
[A, F_{1,0}]_{q}
&=
[[\cdots [[F_{2,1},F_{1,0}]_q,F_{3,0}]_{q},\cdots,F_{n-2,0}]_q,F_{n-1,0}]_{q}\,
\\
&=(-d)^{-1}[[\cdots [[F_{1,1},F_{2,0}]_{q},F_{3,0}]_{q},\cdots,F_{n-2,0}]_q,F_{n-1,0}]_{q}\,.
\end{align*}
In the last line we use the quadratic relations.
By the same token we have
\begin{align*}
A&=(-d)[[\cdots [F_{3,1},F_{2,0}]_{q},\cdots,F_{n-2,0}]_q,F_{n-1,0}]_{q} \\
&=(-d)[[[\cdots [F_{3,1},F_{4,0}]_{q},\cdots,F_{n-2,0}]_q,F_{n-1,0}]_{q},F_{2,0}]_{q} \,,
\end{align*}
and repeating this we get
\begin{align*}
A&=(-d)^{n-3}[[\cdots [F_{n-1,1},F_{n-2,0}]_{q},\cdots,F_{3,0}]_{q},F_{2,0}]_{q}\,.
\end{align*}
Hence
\begin{align*}
[A, F_{0,-1}]_{q}
&\equiv (-d)^{n-3}[[\cdots [F_{n-1,1},F_{n-2,0}]_{q},\cdots,F_{2,0}]_{q},
F_{0,-1}]_q\\
&=(-d)^{n-3}[\cdots [[F_{n-1,1},F_{0,-1}]_q,
F_{n-2,0}]_{q},\cdots,F_{2,0}]_{q}\\
&=(-d)^{n-2}[\cdots [[F_{0,0},F_{n-1,0}]_q,
F_{n-2,0}]_{q},\cdots,F_{2,0}]_{q}\,.
\end{align*}
We obtain after simplification
\begin{align*}
\tilde\theta^{-1}\bigl(\tilde H_{0,1} \bigr)
&=\theta^{-1}(H_{0,1})+q_1\theta^{-1}(H_{1,1})\,.
\end{align*}

The case $\tilde H_{0,-1}$ can be handled in the same way.
\end{proof}

\begin{prop}\label{Hi1-2}
Let $n=2$. Then we have $\bmod$ $\cN_{\ge1}$
\begin{align*}
&\tilde{\theta}^{-1}(\tilde H_{0,1})
\equiv \theta^{-1}(H_{0,1})+q_1 \theta^{-1}(H_{1,1})\,,
\quad 
\tilde{\theta}^{-1}(\tilde H_{0,-1})
\equiv \theta^{-1}(H_{0,-1})+q_1^{-1} \theta^{-1}(H_{1,-1})\,.
\end{align*}
\end{prop}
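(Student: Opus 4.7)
The proof follows the same strategy as Proposition \ref{Hi1-n}: express $\tilde\theta^{-1}(\tilde H_{0,\pm 1})$ through the generators of the fusion subalgebra $\E^{0|1}_{n-1}$ as an iterated $q$-commutator, substitute the relevant formulas from Lemma \ref{EFtop}, simplify modulo $\cN_{\ge 1}$ using the quadratic relations of $\E'_n$, and finally identify the result with a combination of the Miki elements $\theta^{-1}(H_{i,\pm 1})$ in $\E'_2$ by means of \eqref{Hi1}--\eqref{H0-1}.

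The essential new ingredient in the case $n=2$ is that $\E^{0|1}_{n-1} = \E'_1(\tilde q_1, q_2, \tilde q_3)$ has rank one, so the formulas \eqref{Hi1}--\eqref{H0-1}, which are stated for $n \ge 2$, do not apply verbatim. Before starting the computation, one first derives (either from \cite{Mi} or by specializing the general formulas) the rank-one Miki formulas for $\E'_1(\tilde q_1, q_2, \tilde q_3)$. They reduce to a single quadratic bracket of the form $\tilde\theta^{-1}(\tilde H_{0,1}) \propto [\tilde F_{0,0}, \tilde F_{0,-1}]_{q_2}$ and $\tilde\theta^{-1}(\tilde H_{0,-1}) \propto [\tilde E_{0,1}, \tilde E_{0,0}]_{q_2^{-1}}$, with an explicit scalar depending on $\tilde d$ and $q_2$.

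With these rank-one formulas in hand, substitute the $n=2$ expressions of Lemma \ref{EFtop} for $\tilde F_{0,0}, \tilde F_{0,-1}$ (respectively $\tilde E_{0,0}, \tilde E_{0,1}$). Expanding the commutator and keeping only contributions of total degree $\pm 1$ reduces everything to a finite sum of quartic monomials in $E_{0,k}, E_{1,l}, F_{0,k}, F_{1,l}$ with bounded indices. Repeated application of the $n=2$ quadratic relations \eqref{quad-rel2} brings these monomials into a normal-ordered form in which the Miki expressions \eqref{Hi1}, \eqref{Hi-1}, \eqref{H01}, \eqref{H0-1} specialized to $\E'_2$ become recognizable, allowing one to read off the coefficients of $\theta^{-1}(H_{0,\pm 1})$ and $\theta^{-1}(H_{1,\pm 1})$.

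The main obstacle is precisely this final simplification. Unlike in the $n \ge 3$ proof, where each $\tilde E_{0,k}, \tilde F_{0,k}$ is a simple $q$-bracket of two generators, in the $n=2$ setting each is a three-term combination with coefficients mixing $q_1$ and $q_3$ and an overall rescaling by $a^{-1}$ or $b^{-1}$. The expansion of the Miki commutator from step one therefore produces substantially more cross terms; verifying that, modulo $\cN_{\ge 1}$, they telescope into exactly the prescribed combination with ratio $1 : q_1^{\pm 1}$ between the two summands is a genuine bookkeeping computation that relies crucially on the cubic Serre relations for $n=2$.
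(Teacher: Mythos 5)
Your proposal contains a genuine misunderstanding of the Miki automorphism for the rank-one algebra $\E'_1$. You assert that $\tilde\theta^{-1}(\tilde H_{0,1}) \propto [\tilde F_{0,0}, \tilde F_{0,-1}]_{q_2}$ and $\tilde\theta^{-1}(\tilde H_{0,-1}) \propto [\tilde E_{0,1}, \tilde E_{0,0}]_{q_2^{-1}}$, analogizing with the bracket structure of formulas \eqref{Hi1}--\eqref{H0-1}. This is not what happens. For quantum toroidal $\gl_1$ there is a single node, and the Miki transform does not produce iterated brackets at all: one has simply
\[
\tilde\theta^{-1}\bigl(\tilde H_{0,1}\bigr)=-a\,\tilde K_0^{-1}\tilde F_{0,0},\qquad
\tilde\theta^{-1}\bigl(\tilde H_{0,-1}\bigr)=-\tilde K_0\tilde E_{0,0},
\]
with $a=q(1-\tilde q_1)(1-\tilde q_3)$. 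That is, $\tilde\theta^{-1}(\tilde H_{0,\pm 1})$ is a single fused current component (up to a Cartan factor and scalar), not a bracket of two. This is the structurally decisive point, and getting it wrong undermines the rest of the plan.

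Because of this, your estimate of what the computation looks like is off in two ways. First, since the starting expression is a single $\tilde F_{0,0}$ (resp.\ $\tilde E_{0,0}$), substituting the $n=2$ part of Lemma \ref{EFtop} produces a sum of \emph{quadratic} monomials $F_{i,k}F_{j,l}$ (not quartic), and the degree count is manageable. Second, the simplification that telescopes these into the desired combination of $\theta^{-1}(H_{0,\pm 1})$ and $\theta^{-1}(H_{1,\pm 1})$ uses only the $n=2$ quadratic $E$-$E$/$F$-$F$ relations \eqref{quad-rel2} (rewritten as a bracket identity such as $[F_{0,1},F_{1,-1}]_{q^{2}}=[F_{1,1},F_{0,-1}]_{q^{2}}-(q_1^{-1}+q_3^{-1})[F_{1,0},F_{0,0}]$), not the cubic Serre relations. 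Once those two points are corrected, the proof is a short direct calculation: expand Lemma \ref{EFtop}, drop terms in $\cN_{\ge1}$, apply the quadratic relation once, and match against the $n=2$ specializations of \eqref{Hi1} and \eqref{H01}, which for $n=2$ read $\theta^{-1}(H_{1,1})=d^{-1}[F_{0,0},F_{1,0}]_{q^2}$ and $\theta^{-1}(H_{0,1})=d^{-1}[F_{1,1},F_{0,-1}]_{q^2}$.
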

\begin{proof}
We recall that the Miki automorphism for $\E^{0|1}_1$ is given by
\begin{align*}
&\tilde\theta^{-1}\bigl(\tilde H_{0,1}\bigr)=-a\tilde K_0^{-1}\tilde F_{0,0}\,,\quad
\tilde\theta^{-1}\bigl(\tilde H_{0,-1}\bigr)=-\tilde K_0\tilde E_{0,0}\,,\\
&\tilde\theta^{-1}\bigl(\tilde E_{0,0}\bigr)=-\tilde C^{-1}\tilde H_{0,1}\,,\quad
\tilde\theta^{-1}\bigl(\tilde F_{0,0}\bigr)=-a^{-1}\tilde C\tilde H_{0,-1}\,,\\
&\tilde\theta^{-1}\bigl(\tilde C\bigr)=\tilde K_{0}^{-1}\,,\quad
\tilde\theta^{-1}\bigl(\tilde K_0\bigr)=\tilde C\,,
\end{align*}
where $a=q(1-\tilde q_1)(1-\tilde q_3)$. 
As an example let us check $\tilde\theta^{-1}\bigl(\tilde H_{0,1}\bigr)$. 
By Lemma \ref{EFtop} we have
\begin{align*}
qq_1^2\tilde\theta^{-1}\bigl(\tilde H_{0,1}\bigr)
\equiv F_{1,0}F_{0,0}-(1+q_1q_3^{-1}-q_1^{2})F_{0,0}F_{1,0}+q_1F_{0,1}F_{1,-1}\,.
\end{align*}
Using the quadratic relation
\begin{align*}
[F_{0,1},F_{1,-1}]_{q^{2}}=
[F_{1,1},F_{0,-1}]_{q^{2}}-(q_1^{-1}+q_3^{-1})[F_{1,0},F_{0,0}]\,,
\end{align*}
we obtain
\begin{align*}
qq_1^2\tilde\theta^{-1}\bigl(\tilde H_{0,1}\bigr)
&\equiv q_1\bigl([F_{1,1},F_{0,-1}]_{q^{2}}+q_1[F_{0,0},F_{1,0}]_{q^{2}}\bigr)\\
&=qq_1^2\bigl(\theta^{-1}(H_{0,1})+q_1\theta^{-1}(H_{1,1})\bigr)\,.
\end{align*}
\end{proof}

\subsection{Action of $\ssK_m^{\pm,=}(z)$ on ${\rm GZ}_{\bla^0}$: the general case}\label{general K sec}
To compute projections of  $\tilde\theta^{-1}(\tilde H_{i,\pm r})$ for $r\ge2$ we use the following argument.

Let us summarize our knowledge so far. Inside $\widetilde{\E}'_n$
we have mutually commuting subalgebras $\E'_{1,0}$ and $\E'_{n-1}$:
\begin{align*}
\E'_{1,0}\otimes \E_{n-1}^{0|1}\ \hookrightarrow \widetilde{\E}'_n\,.
\end{align*}
Let $\tilde{\tilde H}_{0,r}\in \E'_{1,0}$,  
$\tilde{H}_{i,r}\in \E_{n-1}^{1|0}$ ($0\le i\le n-2$) be the respective
commuting elements (we concentrate on the case $r\ge 1$). 
Let $\tilde{\tilde \theta}$ be the Miki automorphism of $ \E'_{1,0}$.  
We have two subalgebras of  $\E^0_n =U_q^h\widehat{\mathfrak{gl}}_{n}\subset \widetilde{\E}'_n$, 
\begin{align*}
&\cA=\pr\Bigl(\C[\tilde{\tilde \theta}^{-1}\bigl(\tilde{\tilde H}_{0,r}\bigr)
\mid r\ge 1]\otimes \C[\tilde{\theta}^{-1}\bigl(\tilde{H}_{i,r}\bigr)\mid r\ge 1, 0\le i\le n-2]\Bigr) \,,
\\
&\cB=\C[\theta^{-1}(H_{i,r})\ \mid r\ge 1, 0\le i\le n-1]\,.
\end{align*}
Both subalgebras are commutative. 
Note that both belong to the subalgebra generated by $E_{i,r}$, $0\le i\le n-1$, $r\in \Z$. 
From Proposition \ref{Hi1-n} and \ref{Hi1-2}, we know that 
\begin{align*}
\theta^{-1}(H_{i,1})\in \cA\cap \cB \quad (0\le i\le n-1)\,.
\end{align*}

\begin{prop}\label{A=B}
We have $\cA=\cB$. 
\end{prop}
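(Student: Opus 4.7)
The plan is to prove $\cA = \cB$ grade by grade, using the identification at $r = 1$ from Propositions \ref{Hi1-n} and \ref{Hi1-2} as the base case and then extending to all $r \ge 1$.

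First, I transport the homogeneous grading from the vertical subalgebra $U_q^v\widehat{\gl}_n$ to the horizontal subalgebra $\E_n^0 = U_q^h\widehat{\gl}_n$ via the Miki automorphism $\theta$. In this grading, each $\theta^{-1}(H_{i,r})$ lies in pure degree $r$; and because the fusion construction for $\E'_{1,0}$ and $\E_{n-1}^{0|1}$ is degree-preserving on vertical Cartan data, the projected fused generators $\pr(\tilde\theta^{-1}(\tilde H_{i,r}))$ and $\pr(\tilde{\tilde\theta}^{-1}(\tilde{\tilde H}_{0,r}))$ of $\cA$ also land in pure degree $r$. Consequently both $\cA$ and $\cB$ become $\Z_{\ge 0}$-graded commutative polynomial algebras, each generated by its degree-$r$ linear piece $\cA_r^{\mathrm{lin}}, \cB_r^{\mathrm{lin}}$ of dimension $n$. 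It therefore suffices to show $\cA_r^{\mathrm{lin}} = \cB_r^{\mathrm{lin}}$ for every $r \ge 1$, and the base case $r = 1$ is immediate from the invertible triangular formulas of Propositions \ref{Hi1-n} and \ref{Hi1-2}.

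For the inductive step $r \ge 2$, the strategy is to repeat the computation of those propositions in higher homogeneous degree. Namely, one writes $\tilde\theta^{-1}(\tilde H_{i,\pm r})$ as a nested commutator of $\tilde E_j, \tilde F_j$ currents via the internal Miki formulas of $\E_{n-1}^{0|1}$ (analogues of \eqref{Hi1}--\eqref{H0-1}), substitutes the fusion identities generalizing \eqref{EFfus0}, \eqref{EFfus1} at higher degree via Lemma \ref{EFtop}-type calculations, and reduces modulo $\cN_{\ge r+1}$ using the quadratic relations \eqref{quad-rel}, \eqref{quad-rel2}. Since by the triangular decomposition $\widetilde\E'_n = \E_n^- \otimes \E_n^0 \otimes \widetilde\E_n^+$ the projection $\pr$ is a homomorphism on degree-zero elements, the projected result is recognised as a scalar multiple of the Miki expression for $\theta^{-1}(H_{i+1,r})$ (with an analogous two-term combination for $i = 0$); the $\tilde{\tilde\theta}^{-1}(\tilde{\tilde H}_{0,r})$ side is handled symmetrically via $\E'_{1,0}$. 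Linear independence of the $\theta^{-1}(H_{i,r})$ at each grade then forces the inclusion $\cA_r^{\mathrm{lin}} \subseteq \cB_r^{\mathrm{lin}}$ to be an equality.

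The main obstacle is the rapid combinatorial growth of the nested-commutator expressions with $r$, which would make a direct brute-force computation prohibitive. A cleaner abstract route, which I expect the paper to take, is to characterise the $\pr$-image of the fused vertical Heisenberg subalgebras of $\E'_{1,0} \otimes \E_{n-1}^{0|1}$ as a commutative polynomial subalgebra of $\E_n^0$ of rank $n$ per grade that, by the base case, already contains $\cB_1^{\mathrm{lin}}$, and then exploit the commutation relations in the ambient $\widetilde\E'_n$ — in particular the $H$-$E$ and $H$-$F$ actions of $\theta^{-1}(H_{j,1}) \in \cA \cap \cB$ on the fused currents — to propagate the identification from grade $1$ to arbitrary $r$ by a rank-preserving comparison. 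Either route ultimately rests on the compatibility of the fusion construction with the Miki automorphism modulo $\pr$, which is the key structural content of the proposition.
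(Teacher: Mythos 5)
Your proposed grade-by-grade computation is not the route the paper takes, and it contains a genuine gap. The explicit computation of $\tilde\theta^{-1}(\tilde H_{i,\pm r})$ for $r\ge 2$ would require closed formulas for the Miki automorphism on higher Heisenberg modes that the paper never provides, and, as you acknowledge, the nested commutators become intractable. The more serious problem is your unargued claim that $\cA^{\mathrm{lin}}_r$ has dimension $n$ in each grade: $\cA$ is by definition the \emph{projection} $\pr(\cdots)$, and a projection can drop rank. There is no a priori reason the images of the $n$ fused Heisenberg generators at each grade stay linearly independent; that they do is essentially the content of the proposition, so you are assuming what must be proved. Your ``abstract route'' at the end hints at the right ingredient---commuting with $\theta^{-1}(H_{j,1})\in\cA\cap\cB$---but the ``rank-preserving comparison'' you invoke rests on the same unjustified dimension claim.

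The paper's actual argument avoids any induction on $r$. It first reduces from $U_q\widehat{\gl}_n$ to $U_q\widehat{\sln}_n$, and then isolates a clean structural lemma: the commutant of $H_{1,1},\dots,H_{n-1,1}$ inside the nilpotent subalgebra $U_q\mathfrak{n}=\langle e_0,\dots,e_{n-1}\rangle$ is exactly $\C[H_{i,r}\mid r\ge1,\ 1\le i\le n-1]$. This lemma is established by a specialization argument: the containment of $\C[H_{i,r}]$ in the commutant is obvious, the equality is elementary at $q=1$, and the kernel of the adjoint action of the $H_{i,1}$ can only shrink for generic $q$, so equality persists. Once the lemma is available the proposition follows in one line: $\cA$ is commutative and contains all $\theta^{-1}(H_{i,1})$ by Propositions \ref{Hi1-n} and \ref{Hi1-2}, hence $\cA$ sits inside the commutant, which is $\cB$. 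No computation of higher modes and no dimension count on $\cA$ is required. If you want to salvage your approach, you would need to supply precisely such a commutant characterization (or an independent proof that the projection preserves rank); as written, your inductive step does not close.
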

\begin{proof}
We can reduce the problem from $U_q\widehat{\mathfrak{gl}}_n$
to $U_q\widehat{\mathfrak{sl}}_n$. 
So it suffices to prove the following statement:
Let $U_q\mathfrak{n}=\langle e_0,\ldots,e_{n-1}\rangle$ 
be the nilpotent subalgebra of $U_q\widehat{\mathfrak{sl}}_n$, 
and let $H_{i,r}\in  U_q\widehat{\mathfrak{sl}}_n$ ($r\ge 1, 1\le i \le n-1$)
be the Cartan-like commutative elements. Then the commutant of 
$H_{1,1}\,,\ldots, H_{n-1,1}$ in  $U_q\mathfrak{n}$ 
coincides with $\C[H_{i,r}\mid r\ge 1,1\le i\le n-1]$. 
The commutant obviously contains $\C[H_{i,r}\mid r\ge 1,1\le i\le n-1]$. 
Also for $q=1$ the statement is easy to see. Therefore it follows for generic $q$. 
\end{proof}

Proposition \ref{A=B} says that the projection  $\pr\bigl(\tilde{\theta}^{-1}(K_{i,\pm}(z))\bigr)$ 
belongs to the algebra generated by  $K_i^{\pm,=}(z)$. From \cite{FJMM1} we know that equalities \eqref{KKK1}, \eqref{KKK2} 
hold on the highest weight vectors of generic tensor products of Fock spaces. Therefore, by analytic continuation, 
these equalities hold on all highest weight vectors.
We note that the ideal of the projection $\pr$ is contained in 
the ideal which annihilates highest weight vectors. Therefore 
 Proposition \ref{simple proj} follows.

\subsection{The end of the proof}
Applying Proposition \ref{simple proj} repeatedly, we obtain \eqref{projection}.

\medskip

To show Theorem \ref{decompose}, it remains to show that the joint spectrum of $\ssK_{m}^{\pm,=}(z)$ is simple on $\oplus_{\bla} W(\bla)$. 
Suppose that the eigenvalues of $\ssK_{m}^{\pm,=}(z)$ 
coincide on some vectors $v\in W(\bla)$ and $w\in W(\bs \mu)$.  
Recall that $\bla^0$ is generic. 
Each Fock space entering as a factor 
in $W(\bla)$ depends on just one $\la^0_{i,j}$. 
It means that for each $i,j$ the eigenvalues should be 
the same on Fock spaces depending on $\la_{i,j}$ and $\mu_{i,j}$. 
The spectral parameters of these Fock spaces differ by $q_2^a$ with some 
$a\in\Z$. The eigenvectors in a Fock space are parametrized by partitions, 
and 
the corresponding eigenvalues are given by 
products over concave and convex boxes, 
see Lemma 3.4 in \cite{FJMM1}.
With the aid of this formula, 
it is easy to see that if the eigenvalues are the same on two 
partitions in two such Fock modules, then $a=0$ and the partitions are the same.

\bigskip

{\bf Acknowledgments.\ }
The research of BF is supported by 
the Russian Science Foundation grant project 16-11-10316. 
MJ is partially supported by 
JSPS KAKENHI Grant Number JP16K05183. 
EM is partially supported by a grant from the Simons Foundation  
\#353831.

We would like to thank Kyoto University for hospitality during our visit 
in summer 2017 when part of this work was completed.

\bigskip

\end{document}